\numberwithin{equation}{section}
\newtheorem{theorem}{Theorem}[section]
\newtheorem{lemma}[theorem]{Lemma}
\newtheorem{defn}[theorem]{Definition}
\newtheorem{RHP}{Riemann-Hilbert Problem}[section]
\theoremstyle{remark}
\newtheorem*{rem}{Remark}
\newcommand{\eps}{\varepsilon}
\newcommand{\NN}{\mathbb{N}}
\newcommand{\RR}{\mathbb{R}}
\newcommand{\CC}{\mathbb{C}}
\newcommand{\UC}{\mathbb{T}}
\newcommand{\Ordo}{\mathcal{O}}
\newcommand{\F}{\mathcal{F}}
\newcommand{\E}{\mathbb{E}}
\renewcommand{\L}{\mathbb{L}}
\DeclareMathOperator{\V}{Var}
\DeclareMathOperator{\Var}{Var}
\DeclareMathOperator{\im}{Im}
\newcommand{\N}{\text{N}}
\renewcommand{\d}{\,\mathrm{d}}
\newcommand{\e}{\mathrm{e}}
\renewcommand{\i}{\mathrm{i}}
\DeclareMathOperator{\Tr}{Tr}
\title{Mesoscopic fluctuations for the thinned Circular Unitary Ensemble}
\author{Tomas Berggren\footnote{Department of Mathematics, Royal Institute of Technology (KTH), Stockholm Lindstedsv\"agen 25, SE-10044, Sweden. Email: tobergg@kth.se.  Supported  by the  grant KAW 2010.0063 from the Knut and Alice Wallenberg Foundation.} 
\and Maurice Duits\footnote{Department of Mathematics, Royal Institute of Technology (KTH), Stockholm Lindstedsv\"agen 25, SE-10044, Sweden. Email: duits@kth.se.  Supported  by the  Swedish Research Council (VR) Grant no.\ 2012-3128.} 
}
\begin{document}

\maketitle

\begin{abstract}
In this paper we study the asymptotic behavior of mesoscopic fluctuations for the thinned Circular Unitary Ensemble. The effect of thinning is that the eigenvalues  start to decorrelate. The decorrelation is  stronger on the larger scales than on the smaller scales.  We investigate this behavior by studying mesoscopic linear statistics. There are two regimes depending on the scale parameter and the thinning parameter.  In one regime we obtain  a CLT of a classical type and in the other regime we retrieve the  CLT for CUE. The two regimes are separated by a critical line. On the critical line the limiting fluctuations are no longer Gaussian, but described by  infinitely divisible laws. We argue that this transition phenomenon is universal by  showing that the same transition and their laws appear for  fluctuations of the thinned sine process in a growing box.  The proofs are based on a Riemann-Hilbert problem for integrable operators.

% We prove a transition phenomena that has been seen previous in other models. 
\end{abstract}

\section{Introduction and statement of results}

   Let $\{{\rm e}^{ {\rm i} \theta_j}\}_{j=1}^n$ be the eigenvalues of a unitary matrix taken  randomly  from the CUE of size $n \in \mathbb N$ (the group of $n \times n$ unitary matrices equipped with Haar measure). Then the \emph{thinned} CUE eigenvalue process is obtained by removing each $\theta_j$ independently with a probability $1-\gamma \in (0,1)$. The purpose of this paper is to investigate the effect of  thinning on the limiting behavior of the  \emph{mesoscopic fluctuations} as $n \to \infty$.  
   
We  analyze the fluctuations by means of linear statistics.    That is, for $\alpha \in (0,1)$ and a sufficiently smooth function $f$ with compact support we consider the random variable
\begin{equation}
\label{eq:deflinestat} 
X_n^{(\alpha,\gamma)}(f)= \sum_j f(\theta_j n^{1-\alpha}).
\end{equation}
 Here the $\{\theta_j\}$ come from the point process obtained by a  thinning, with parameter $\gamma$, of the  eigenangles of a randomly chosen $n \times n$  CUE matrix. This also explains the upper index $\gamma$ in the notation.  Note that the number of eigenvalues that contribute to the linear statistic is random. Since $f$ has compact support only eigenangles on interval of lengths of order $\sim n^{\alpha-1}$ contribute to $X_n^{(\alpha,\gamma)}(f)$. These are the mesoscopic scales, whereas $\alpha=0$ and $\alpha=1$ correspond to the micro- and macroscopic scales.   The purpose of the paper is  to study  the behavior of 
$$X_n^{(\alpha,\gamma)}(f)- \E X_n^{(\alpha,\gamma)}(f),$$
as $n \to \infty$ and how the thinning effects this behavior. 

The thinning of a point process is a classical construction and we refer to, e.g., \cite{DV} for background.  Roughly speaking, its effect is that  the points become more uncorrelated and it can thus be used for  creating a family of point processes that define a cross-over from the original process and a Poisson process, see e.g. \cite[Prop. 11.3.I]{DV}. The eigenvalues of a CUE matrix are strongly correlated and as a result the limiting behavior of the fluctuations as $n \to \infty$ have a very different character when compared to uncorrelated systems. By thinning the CUE we obtain a natural transition between those types of systems. 

In the context of random matrix theory, thinning was first discussed by  Bohigas and Pato \cite{BP1,BP2} for the  sine process. Recently, Bothner, Deift,  Its and Krasovsky \cite{BDI}  computed very detailed asymptotics for the gap probabilities for the thinned sine process.  For the thinned CUE similar results were obtained  by Charlier and Claeys in \cite{CC}. Here we will have have a different perspective and analyze mesoscopic linear statistics. The results in this paper are an addition to the recent developments on various models from random matrix type where similar transitions were studied,  such as free fermions in non-zero temperature by Johansson and Lambert \cite{JL},   random  band matrices by Erd\H{o}s and Knowles  \cite{KE1,KE2} and Dyson's Brownian motion by one of us together with Johansson \cite{DJ}.  Although the laws describing these transitions appear to be depending on the model at hand, one aspect seems to be universal: If we start from the uncorrelated situation, then in the transition  the  fluctuations on the smaller scales, i.e. $\alpha$ closer to $0$,  stabilize earlier to random matrix type of statistics  than those on the larger scales, i.e. $\alpha$ closer to $1$. 

Also in the model of the thinned CUE eigenvalues process we will see that  the effect of thinning is felt stronger on the larger scales then the smaller scales. In fact, if we thin only a little  and take $\gamma= \gamma_n  \to  1$ as $n \to \infty$, then the effect of the thinning on the smallest scales is washed out and we observe the usual random matrix statistics in the limit. However, if we still take $\gamma_n$ such that $n(1-\gamma_n) \to \infty$, then the leading asymptotic behavior of the global fluctuations turns out to be the same as in the case that the $\theta_j$ are independent.  Somewhere in the intermediate scales a transition from one behavior to the other takes place and it is that transition that we want to describe in this paper.

Before we state our main results we first recall some results for the behavior of linear statistics for the CUE without thinning, i.e. $\gamma=1$. In that case it is known that the variance of the linear statistic remains bounded
$\Var X_n^{(\alpha,1)}(f)= O(1)$ as $n \to \infty$ and has the following limit 
$$\lim_{n\to\infty} \Var X_n^{(\alpha,1)}(f) = \int_\RR |\omega| |\mathcal F f(\omega)|^2 {\rm d}\omega,$$
where $\mathcal F$ stands for the Fourier transform
$$\mathcal F f(\omega)= \frac{1}{2 \pi} \int_\RR {\rm e}^{ - {\rm i} x\omega} f(x) {\rm d} x.$$
 The fact that the variance remains bounded is remarkable since the number of points that contribute to the linear statistic is of order $\sim n^\alpha$.  Indeed, if the $\theta_j$'s would be independent then the variance would therefore be of order $\sim n^\alpha$. Due to the bounded variance, it is somewhat surprising that the fluctuations of the linear statistics still obey a Central Limit Theorem (=CLT). That is, 
\begin{equation} \label{eq:mesoscopicCLTCUE}
X_n^{(\alpha,1)}(f)- \E \left[X_n^{(\alpha,1)}(f)\right]
\overset{\mathcal D}{\to } N\left(0,\int_\RR |\omega| |\mathcal F f(\omega)|^2 {\rm d}x\right).
\end{equation}
The first proof of this result is due to Soshnikov \cite{Sosh} and is the analogue of an analogous CLT on the macrosopic scale. On the macrosopic scale it is a direct consequence of the Strong Szeg\H{o} Limit Theorem, see for example  \cite{Diaconis03} for a survey.  The mesoscopic CLT \eqref{eq:mesoscopicCLTCUE} is a universal limit that is expected to hold in the bulk for many random matrix models. In the last years, there has been much interest to prove this universality in a variety of settings \cite{BD,FKS,KH,Lambert}.

Let us now return to the thinned CUE. Then a  computation, cf. \eqref{eq:section_preliminaries:variance_of_thinned_CUE},
shows that the variance of the linear statistic can be written as 
\begin{equation} \label{eq:variancegamma}
 \V[X_n^{(\alpha,\gamma)}(f)] = \frac{ n^{\alpha} \gamma(1-\gamma)}{2\pi}\int_{-n^{1-\alpha} \pi}^{n^{1-\alpha}\pi}f( x)^2dx + \gamma^2\V[X_n^{(\alpha,1)}(f)].
 \end{equation}
Here we clearly see two terms competing with each other. The first term on the right-hand side is close to the variance that we would expect if the $\theta_j$ was taken from the Poisson point process with mean density $ \frac{ n^{\alpha} \gamma(1-\gamma)}{2\pi} $, equal for big $ n $ if $ f $ has compact support, and the second term is the variance for the CUE without thinning. The second term is $\sim 1$ and hence we see that when $n^\alpha(1-\gamma_n) \to \infty$ we have a growing variance and expect a CLT of classical type. When  $n^\alpha(1-\gamma_n) \to 0$, we expect  a CLT of CUE type.  The following theorem confirms that expectation. 

\begin{theorem}\label{th:transCUEfirsttwocases}
Let $0 < \alpha,\delta <1$, $\kappa>0$ and set $\gamma=\gamma_n=1-\kappa n^{-\delta}$. Then for any $f: \RR \to \RR$ with compact support and such that
\begin{equation}\label{eq:optimalcondition}
\int |\mathcal F f(\omega)|^2(1+ |\omega|) {\rm d} \omega < \infty,
\end{equation}
we have 
\begin{enumerate}
\item  If $\alpha < \delta$, then, as $n \to \infty$,
$$X_n^{(\alpha,\gamma_n)}(f)- \E \left[X_n^{(\alpha,\gamma_n)}(f)\right]
\overset{\mathcal D}{\to } \mathcal N\left(0,\int_\RR |\omega| |\mathcal F f(\omega)|^2 {\rm d}x\right).$$
\item If $\alpha > \delta$, then, as $n \to \infty$, 
$$n^{-(\alpha-\delta)/2}\left(X_n^{(\alpha,\gamma_n)}(f)- \E\left[X_n^{(\alpha,\gamma_n)}(f)\right]\right)
\overset{\mathcal D}{\to } \mathcal N\left(0, \frac{\kappa}{2\pi} \int_\RR f(x)^2 {\rm d}x\right)$$
\end{enumerate}
\end{theorem}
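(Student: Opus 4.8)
The natural approach is via the cumulant method for determinantal point processes. The thinned CUE is again determinantal, with kernel $\gamma K_n$ where $K_n$ is the usual CUE sine-type kernel on the circle. I would first record the cumulant expansion: for a determinantal process with kernel $L$, the $m$-th cumulant of a linear statistic $\sum_j g(\theta_j)$ is a fixed universal combination of traces of products $\Tr(M_{g}L M_g L \cdots)$, where $M_g$ is multiplication by $g$. Since $f$ has compact support, after rescaling $x \mapsto \theta n^{1-\alpha}$ these traces become integrals of products of the rescaled kernel $\gamma K_n(x/n^{1-\alpha},y/n^{1-\alpha}) n^{-(1-\alpha)}$ against copies of $f$, and this rescaled kernel converges (in the relevant trace-norm sense on compacts) to $\gamma$ times the sine kernel $\frac{\sin\pi(x-y)}{\pi(x-y)}$. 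So morally the problem reduces to computing cumulants of linear statistics of the $\gamma$-thinned sine process in a box of size $\sim n^\alpha$.

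The heart of the matter is then a careful asymptotic analysis of these kernel integrals in the two regimes. I would split each trace $\Tr((M_f \gamma K)^m)$ using the elementary identity: writing $\gamma K = \gamma^2 K + \gamma(1-\gamma)K$ doesn't immediately help, so instead I'd work directly. The second cumulant (the variance) is already given by \eqref{eq:variancegamma} and displays the two competing terms; the strategy for higher cumulants is to show which term dominates. In regime (i), $\alpha<\delta$, one expects $n^\alpha(1-\gamma_n)\to 0$, the ``Poisson-type'' first term in the variance vanishes, the variance converges to the CUE value, and one must show all cumulants of order $\ge 3$ vanish — this should follow because $\gamma_n\to 1$ makes the rescaled kernel converge to the genuine sine kernel and one invokes (or re-derives) the known CUE mesoscopic CLT \eqref{eq:mesoscopicCLTCUE}, with the error from $\gamma_n\ne 1$ controlled quantitatively by $|1-\gamma_n|=\kappa n^{-\delta}$ beating the growth $n^\alpha$ of the number of relevant points. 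In regime (ii), $\alpha>\delta$, after the normalization by $n^{-(\alpha-\delta)/2}$ the variance tends to $\frac{\kappa}{2\pi}\int f^2$; one must show the normalized cumulants of order $m\ge 3$ scale like $n^{-m(\alpha-\delta)/2}\cdot(\text{something of order at most }n^{\alpha-\delta})\to 0$. Concretely, the $m$-th cumulant picks up a factor $\gamma(1-\gamma)\sim \kappa n^{-\delta}$ from each ``diagonal'' contraction and the length $n^\alpha$ of the interval from the single free integration, so the leading term of the $m$-th cumulant is $O(n^{\alpha-\delta})$ for all $m\ge 2$; dividing by $n^{m(\alpha-\delta)/2}$ kills everything with $m\ge 3$. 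Making this book-keeping rigorous — identifying precisely the leading graph/contraction pattern and bounding the remainder uniformly — is the main obstacle, and it is presumably where the Riemann–Hilbert analysis advertised in the abstract enters, to get sharp control of $K_n$ (and of the resolvent-type objects) beyond what soft trace-norm estimates give, especially near the critical balance $n^\alpha(1-\gamma_n)\asymp 1$.

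An alternative, and perhaps cleaner, route is to go through a deformed kernel / Fredholm-determinant generating function: the Laplace transform $\E\,\e^{t X_n^{(\alpha,\gamma_n)}(f)}$ is a Fredholm determinant $\det(1+(\e^{tf}-1)\gamma K_n)$, and its logarithm can be differentiated in $t$ to produce $\int_0^t \Tr\big(f \gamma K_n (1+(\e^{sf}-1)\gamma K_n)^{-1}\big)\e^{sf}\d s$. One then needs the large-$n$ asymptotics of this resolvent trace, uniformly for $s$ in a complex neighborhood of $0$, which is exactly the type of quantity a Riemann–Hilbert steepest-descent analysis for integrable operators is designed to produce. This would yield $\log \E\,\e^{t(X_n-\E X_n)} \to \frac{t^2}{2}\int|\omega||\mathcal F f(\omega)|^2\d\omega$ in regime (i), and, after rescaling $t\mapsto t n^{-(\alpha-\delta)/2}$, $\to \frac{t^2}{2}\cdot\frac{\kappa}{2\pi}\int f^2$ in regime (ii), giving the two CLTs by Lévy continuity. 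In either route the technical crux is the same: a uniform-in-$n$ asymptotic expansion of kernel/resolvent traces with explicit identification of which of the two terms in \eqref{eq:variancegamma} wins, and control of the error down to the optimal smoothness hypothesis \eqref{eq:optimalcondition} on $f$, which suggests approximating $f$ by Schwartz functions and propagating the $H^{1/2}$-type norm through all the estimates.
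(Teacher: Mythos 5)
Your second route---writing the Laplace transform $\E\,\e^{t X_n}$ as the Fredholm determinant $\det(I+(\e^{tf_\alpha}-1)\gamma K_n)$, differentiating its logarithm to get a resolvent trace, and extracting the large-$n$ asymptotics of that trace via a Riemann--Hilbert steepest-descent analysis for the integrable kernel---is exactly the paper's method (Lemma~\ref{lem:section_preliminaries:characteristic_function_as_integral} and Section~\ref{Proof of CUE}). The two regimes then emerge, as you anticipate, from the term $n\widehat{\log\varphi}_{n,1}(0)$ versus the $H^{1/2}$-type sum in Lemma~\ref{lem:section_proof_of_CUE:approximated_characteristic_function}, and the relaxation to the optimal hypothesis \eqref{eq:optimalcondition} is indeed done by approximation and a variance estimate (Lemma~\ref{lem:momenttovariance}). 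Your first route (cumulants) is not what the paper does; your heuristic that the $m$-th cumulant is $O(n^{\alpha-\delta})$ is plausible, but making it rigorous uniformly in $m$ would itself require kernel asymptotics of the same depth, so it would not be a shortcut.

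The genuine gap in the plan is that it stops precisely where the proof starts. You flag ``uniform-in-$n$ asymptotics of the resolvent trace'' as the crux but do not engage with the specific obstruction on mesoscopic scales. Concretely: (a) the symbol $\varphi_{n,t}(z)=1-t\gamma_n(1-\e^{\lambda n^{-s}f_\alpha(z)})$ involves $f_\alpha(\theta)=f(\theta n^{1-\alpha})$, which is not analytic and, even after replacing $f_\alpha$ by the truncated Fourier polynomial $f_{\alpha,N}$ of Section~\ref{subsection:section_proof_of_CUE:approximation_to_an_analytic_function}, blows up off the unit circle unless the lens lips are taken at radius $\rho_N=\rho^{1/N}\to 1$; the paper's key technical device is choosing $N$ in the window \eqref{eq:section_proof_of_CUE:def_N} so that $f_{\alpha,N}$ is both a good $H^{1/2}$-approximation of $f_\alpha$ and uniformly bounded on the shrinking annulus, while still leaving $\rho_N^n=\e^{(n/N)\log\rho}\to 0$ exponentially so the jump $J_R$ is close to $I$. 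Your plan does not anticipate this $n$-dependent contour, and a fixed-radius lens would simply fail here. (b) Controlling $C(\log\varphi_{n,t})$ on the shrinking lens also requires the Wiener-norm estimates \eqref{eq:section_proof_of_CUE:wiener_norm}--\eqref{eq:section_proof_of_CUE:bound_of_cauchy_operator_of_logarithm} rather than soft $L^2$ bounds. (c) Finally, replacing $\widehat{\log\varphi}_{n,1}$ by $\lambda n^{-s}\hat f_{\alpha,N}$ in the $H^{1/2}$ sum (Lemma~\ref{lem:section_proof_of_CUE:varphitof}) and estimating $n\widehat{\log\varphi}_{n,1}(0)$ via the $L^1$ convergence of $f_{\alpha,N}$ to $f_\alpha$ \eqref{eq:section_proof_of_CUE:order_of_l1} are non-obvious steps your plan leaves unaddressed. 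So the overall strategy is right, but the proposal as written does not contain the ideas that make the strategy succeed on mesoscopic scales.
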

The proof of this theorem will be given in Section \ref{Proof of CUE}.

\begin{figure}[t]
\begin{center}
\begin{tikzpicture}[scale=0.9]
\draw[->,thick] (-.3,0)--(6.5,0);
\draw[->,thick] (0,-.3)--(0,6.5);
\draw[-,very thick] (0,0) --(6,6);
%\draw[-,very thick] (0,0)--(6,4);
%\filldraw (0,2)  circle(2pt);
%\filldraw (0,0)  circle(2pt);
\draw (-0.6,-0.4) node {$(0,0)$};
\draw (-.3,6) node {$1$};
\draw (6,-.3) node {$1$};
\draw (0,6.7) node {$\alpha$};
%\draw (7.2,4) node {\small{$\alpha= \frac{1+(k+1) \gamma}{k+2}$}};
\draw (6.7,6) node {\small{$\alpha=\delta $}};
\draw (7.3,5.1) node{$\mathcal L_{f,\kappa,\sigma_f}$} ;
%\draw (7.8,4.8) node{ + Inf. div. law.} ;
%\draw (7.7,5.5) node {\small{$\Var Y_n(f) \to \sigma_\tau(f)^2$}};
\draw (7.2,0) node {$\delta$};
\draw[-] (6,-.1)--(6,.1);
\draw[-] (-.1,6)--(.1,6);
\draw[help lines,dashed] (0,6)--(6,6)--(6,0);
\draw (-2,0) node{Microscopic};
\draw (-2,6) node{Macroscopic};
\draw (0,-1) node{ $\sim 1$ thinning};
\draw (6,-1) node{ $\sim 1/n$ thinning};
%\draw (1.5,4) node{$ \Var Y_n(f)  \downarrow 0$} ;
\draw (1.8,4.0) node{classical CLT} ;
%\draw (4,1.5) node{$ \Var X_n^{(\alpha,\gamma)}(f) \to  \sigma_\infty(f)^2$} ;
\draw (4,1.5) node{CUE CLT} ;
%\draw (4,3.5)   node [rotate=37] {$ \sim n^{1-\alpha+(k+1)(\gamma-\alpha)}$};
%\draw (4,3) node  [rotate=37]  {$\mu_k(f)$ CLT} ;

%\draw (11,5) node {$\alpha < \gamma \Rightarrow \Var Y_n(f) \sim n^{1-\alpha} \|f\|_2^2$};
%\draw (11,4) node {$\alpha = \gamma  \Rightarrow \Var Y_n(f) \sim n^{1-\alpha} \|\mathcal P_\tau f\|_2^2$};
%\draw (11,3) node {$\gamma<\alpha <\frac{1+\gamma }{2}
 %\Rightarrow \Var Y_n(f) \sim n^{1-2\alpha+\gamma} \mu_0(f)^2$};
%\draw (11,2) node {$\alpha=\frac{1+\gamma}{2} \Rightarrow \Var Y_n(f) \sim \mu_0(f)^2+\sigma_0(f)^2$};
%\draw (11,1) node {$\alpha>\frac{1+\gamma}{2}  \Rightarrow \Var Y_n(f) \sigma_0(f)^2$};
\end{tikzpicture}
\caption{$\alpha\delta$-Diagram presenting the different regimes. The transition takes place on the critical line $\alpha=\delta$ and the limiting fluctuations are no longer Gaussian.}
\label{fig:phase}
\end{center}
\end{figure}
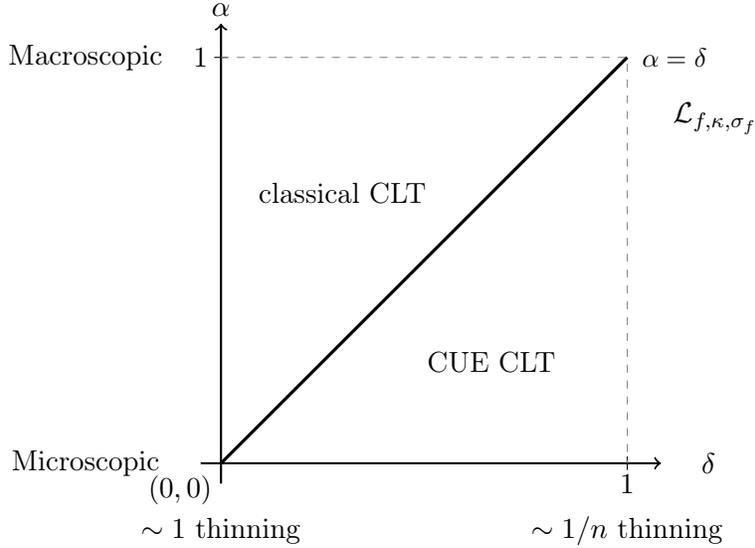

The two terms on the right-hand side of \eqref{eq:variancegamma}  are in balance when 
$$n^\alpha(1-\gamma_n) \to \kappa$$
for some $\kappa>0$ and this is when the transition from one type of behavior to the other takes place. As it turns out the limiting fluctuations are no longer Gaussian, but are described by so-called \emph{infinitely divisible laws.} These are random variables that, for each $n \in \NN$, can be written as the sum of $n$ independent and identically distributed random variables.  The L\'evy-Khintchine Theorem  \cite[Th. 3.8.2]{durrett} tells us that a law $X$ is infinitely divisible if and only if  the characteristic function has the form 
$$\E [\exp  {\rm i} \lambda X]= \exp\left( {\rm i } \beta \lambda - \sigma^2 \lambda^2/2- \int_{\RR\setminus \{0\}} \left( {\rm e} ^{  {\rm i }  \lambda x}-1- \frac{{\rm i} \lambda  x}{1+x^2} \right) {\rm d} \mu(x)\right),$$
where $\beta \in \RR, \sigma^2\geq 0$ and $\mu$ is a measure such that $\mu(\{0\})=0$ and 
$$\int \frac{x^2}{1+x^2} {\rm d} \mu(x) <\infty.$$
The measure $\mu$ is usually  referred to as the \emph{L\'evy measure.}  
The infinitely divisible laws that we encounter are indexed by the function $f$ in the linear statistic and have the form
$$\E [\exp \lambda \mathcal L_{f,\kappa,\sigma}]= \exp\left(\sigma^2\lambda^2/2+ \kappa \int_\RR\left( {\rm e} ^{-\lambda f(x)}-1+\lambda  f(x) \right) {\rm d} x\right).$$
Hence the L\'evy measure $\mu$ is given by the push-forward by $-f$ of the measure $\kappa {\rm d}x$ restricted to the support of $f$ (we recall that the support is compact). That we have $f(x)$ instead of $f(x)/(1+f(x)^2)$ in the integrand is just a shift in the value of $\beta$ in the L\'evy-Khintchine representation. The functions $f$ that we will allow have a slightly stronger smoothness condition than \eqref{eq:optimalcondition}. In particular they will be continuous. 
\begin{theorem} \label{th:transCUEthirdcase}
Let $0 < \alpha<1$, $\kappa>0$ and set $\gamma=\gamma_n=1-\kappa n^{-\alpha}$. For any $f: \RR \to \RR$ with compact support and such that
$$\int_{\mathbb R} |\mathcal Ff(\omega)|^2(1+ |\omega|)^{1+\eps} {\rm d} \omega < \infty,$$
for some $\eps>0$, 
we have 
$$X_n^{(\alpha,\gamma_n)}(f)- \E\left[X_n^{(\alpha,\gamma_n)}(f)\right]
\overset{\mathcal D}{\to } \mathcal L_{f,\kappa/2 \pi,\sigma_f},$$
as $n \to \infty$, where $\sigma_f^2= \int_{\mathbb R} |\omega| |\mathcal F f(\omega)|^2 {\rm d} \omega$. 
\end{theorem}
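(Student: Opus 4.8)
The plan is to compute the limit of the Laplace transform (or characteristic function) of the centered linear statistic directly and match it with the stated infinitely divisible law $\mathcal L_{f,\kappa/2\pi,\sigma_f}$. For a determinantal point process the Laplace transform of a linear statistic is a Fredholm determinant: if $K_n$ denotes the (thinned) CUE kernel, then
\begin{equation}
\E\bigl[\e^{-\lambda X_n^{(\alpha,\gamma_n)}(f)}\bigr]=\det\bigl(I+(\e^{-\lambda f(\cdot\, n^{1-\alpha})}-1)K_n\bigr),
\end{equation}
and after rescaling the variables by $n^{1-\alpha}$ the kernel of the untthinned CUE converges to the sine kernel $\frac{\sin\pi(x-y)}{\pi(x-y)}$ on compacts, while the thinning multiplies the off-diagonal part by $\gamma_n=1-\kappa n^{-\alpha}$. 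The first step is therefore to set up the rescaled kernel $\mathbb K_n$ on $\RR$, isolate the ``temperature'' parameter coming from $1-\gamma_n\sim\kappa n^{-\alpha}$, and express the log of the Fredholm determinant as $\Tr\log(I+(\e^{-\lambda f}-1)\mathbb K_n)$ minus the linear (expectation) term, reducing everything to the convergence of this trace.

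The core of the argument is the Riemann–Hilbert analysis of the relevant integrable operator, which the paper advertises as its main tool. The kernel $(\e^{-\lambda f(x)}-1)\mathbb K_n(x,y)$ is integrable (finite-rank symbol times sine-type kernel), so its Fredholm determinant is governed by a $2\times2$ RH problem whose jump matrix depends on $f$, $\lambda$, $\gamma_n$ and the scaling window $[-n^{1-\alpha}\pi,n^{1-\alpha}\pi]$. The second step is to solve this RH problem asymptotically as $n\to\infty$ in the critical regime $n^\alpha(1-\gamma_n)\to\kappa$: one expects the solution to split into a contribution from the ``Poisson-like'' part — which, because the window grows like $n^{1-\alpha}$ while the local density contributed by the thinned points is $\sim\kappa n^{\alpha-1}/2\pi$ per unit length after rescaling, should produce exactly the term $\frac{\kappa}{2\pi}\int(\e^{-\lambda f}-1+\lambda f)\d x$ in the exponent — and a contribution from the genuine CUE correlations that survives the thinning and yields the Gaussian piece $\sigma_f^2\lambda^2/2$ with $\sigma_f^2=\int|\omega||\F f(\omega)|^2\d\omega$. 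Concretely I would differentiate $\log\det$ in $\lambda$ (or in an auxiliary deformation parameter), express the derivative through the RH solution at special points, integrate back, and track the two additive pieces separately.

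The main obstacle is controlling the interaction between the two scales: the window has length of order $n^{1-\alpha}\to\infty$ (this is what makes the Poisson term a genuine integral over $\RR$ rather than a bounded quantity), yet the thinning parameter is tuned so that the decorrelation it induces is precisely of unit order on this window, so one cannot simply treat the thinned kernel as a perturbation of either the sine kernel or of a rank-one/diagonal operator. Establishing the requisite uniform estimates on the RH solution over a growing contour, with error terms that genuinely vanish under only the stated Sobolev-type regularity $\int|\F f|^2(1+|\omega|)^{1+\eps}\d\omega<\infty$ on $f$ (which is what guarantees $f$ continuous and the Fourier-side manipulations absolutely convergent), is where the real work lies; the stronger exponent $1+\eps$ compared with Theorem~\ref{th:transCUEfirsttwocases} is exactly the slack needed to push the trace-norm error estimates through. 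Once the asymptotics of $\log\det\bigl(I+(\e^{-\lambda f}-1)\mathbb K_n\bigr)$ are pinned down, subtracting the first moment $\lambda\,\E X_n^{(\alpha,\gamma_n)}(f)=\lambda\int (\text{density}) f$ cancels the linear-in-$\lambda$ term coming from the Poisson part, leaving precisely $\log\E[\exp(\lambda\mathcal L_{f,\kappa/2\pi,\sigma_f})]$, and the convergence of Laplace transforms on a neighborhood of $0$ (together with the standard argument that the limit is a legitimate characteristic function, so that tightness is automatic) upgrades to convergence in distribution.
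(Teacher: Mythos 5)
Your high-level plan---Laplace transform, Fredholm determinant of an integrable operator, Riemann--Hilbert analysis, split the log-determinant into a ``Poisson'' piece and a ``Gaussian'' piece, then subtract the mean---is the correct skeleton and is essentially what the paper does. The identification of where the two terms in $\mathcal L_{f,\kappa/2\pi,\sigma_f}$ come from is also right. However, there is a concrete error and a substantial missing step.

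\emph{The rescaling is wrong.} You claim that ``after rescaling the variables by $n^{1-\alpha}$ the kernel of the unthinned CUE converges to the sine kernel.'' It does not: the CUE kernel has diagonal density $\frac{n}{2\pi}$ on the circle, so rescaling $\theta\mapsto x=n^{1-\alpha}\theta$ produces a kernel with diagonal $\sim n^{\alpha}/2\pi\to\infty$, not a bounded limit. To actually land on a sine kernel you would have to rescale microscopically (by $n$), which sends the test function onto a growing box of size $\sim n^{\alpha}$; that is the setup of Theorems \ref{th:transsinefirsttwocases}--\ref{th:transsinethirdcase} and Section~\ref{proof of Sine} of the paper, not the mesoscopic CUE setup. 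The paper never rescales the CUE kernel: it keeps the original integrable kernel $K_n(z,w)=\frac{z^n w^{-n}-1}{2\pi i(z-w)}$ on the unit circle and puts all of the mesoscopic scaling into the symbol $\varphi_{n,t}$ built from $f_\alpha(\theta)=f(\theta n^{1-\alpha})$. The splitting into the two contributions then comes from the explicit formula $n\widehat{\log\varphi}_{n,1}(0)+\sum_{k\geq 1}k\,\widehat{\log\varphi}_{n,1}(k)\widehat{\log\varphi}_{n,1}(-k)$ produced by the global parametrix (Lemma~\ref{lem:section_proof_of_CUE:approximated_characteristic_function}), not from a trace decomposition against a limiting sine kernel.

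\emph{The analyticity/shrinking-lens issue is glossed over.} You acknowledge that uniform RH estimates under only the Sobolev condition are ``where the real work lies,'' but you propose no mechanism, and the standard lens opening simply fails: $f_\alpha$ is not analytic, and even after any reasonable analytic approximation, the symbol oscillates on scale $n^{\alpha-1}$, so a fixed lens would see exponentially large factors. The paper's actual device --- and the central technical contribution of Section~\ref{Proof of CUE} --- is to replace $f_\alpha$ by its Fourier truncation $f_{\alpha,N}$ with $N=N(n)$ satisfying $n^{1-\alpha}\ll N\ll n^{\min(1,\frac32(1-\alpha))}$, and then open the lens on circles of radius $\rho^{1/N}\to 1$. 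This makes $f_{\alpha,N}$ uniformly bounded on the shrinking annulus (via the Wiener norm), keeps $J_R-I$ exponentially small, and still leaves enough room for the error estimates. The $\eps>0$ exponent is then used specifically to get H\"older continuity of $f$ (Lemma~\ref{lem:section_proof_of_CUE:holder}), which feeds into the $L^1$-rate bound $\int|f_{\alpha,N}(\e^{\i\theta})-f_\alpha(\theta)|\d\theta=o(n^{\alpha-1})$ (eq.~\eqref{eq:section_proof_of_CUE:order_of_l1}) that is needed to extract the Poisson term from $n\widehat{\log\varphi}_{n,1}(0)$. Without this entire apparatus the expansion of $\log\varphi_{n,1}$ and the passage to the limit $\frac{\kappa}{2\pi}\int(\e^{-\lambda f}-1)\d x$ cannot be justified, and that step is precisely where your proposal stops.
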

\begin{rem}
Note that for $\kappa=0$ the law $\mathcal L_{f,0,\sigma_f}$ is the Gaussian corresponding to the CUE limit. For $\kappa \to \infty$ we have that $ (2\pi)^{1/2} \kappa^{-1/2} \mathcal L_{f,\kappa/2 \pi,\sigma_f}$ converges in distribution to a Gaussian corresponding to the classical limit.  So the laws we get for finite $\kappa$ interpolate between the Gaussians from  the two different regimes. 
\end{rem}
The proof of this theorem will given in Section \ref{Proof of CUE}.

The  proof that we present here is based on the fact that the eigenangles of a CUE matrix form a determinantal point process. We recall that a simple point process on the interval $ E \subset \RR $, finite or infinite, is determinantal if there exists a kernel $ K:E \times E \rightarrow \RR $ such that for any test function $ \phi $ we have 
\begin{equation}\label{eq:section_preliminaries:def_of_determinantal_point_process}
 \E\left[\prod_k(1+\phi(x_k))\right] = \sum_{m=0}^{\infty}\frac{1}{m!}\int_{E^m}\det( K(x_k,x_{\ell}))_{k,\ell=1}^{m}\prod_{j=1}^m\phi(x_j)dx_j.
\end{equation}
 As it turns out  the thinning of any determinantal point process is  again a determinantal point process. In fact, the kernel changes in a trivial way, 
 \begin{equation}\label{eq:thinnedkernel}
 K_{\gamma}= \gamma K. 
 \end{equation}
The CUE eigenangle process is a determinantal point process on $[-\pi,\pi]$ with kernel
\begin{equation}\label{eq:CUEkernel}
\hat K_n(\theta_1, \theta_2)= \frac{\sin n (\theta_1-\theta_2)/2}{2 \pi\sin(\theta_1-\theta_2)/2}.
\end{equation}
Since we are dealing with mesoscopic statistics we preferred to state our results using the eigenangles. However, in the proof we will use the eigenvalues, which form a determinantal point process on the unit circle with kernel 
$$  K_n(z,w) = \frac{z^{n}w^{-n}-1}{2\pi i(z-w)}.$$
The benefit of this expression is that the kernel is an example of an integrable operator and thus we can use  the Riemann-Hilbert approach for finding asymptotics of the resolvent of  $\gamma  K_n$ as $n \to \infty$. This will allow us to compute the asymptotic behavior of the moment generating function of the linear statistic, which is well-known to be a Fredholm-determinant. The proofs presented here follow that of the proof given in \cite{Deift99}. A key difference is that we are working with mesoscopic scales which complicates the  opening of the lens. We overcome this problem by making the opening of the lens $n$-dependent and let the lips converge to the circle as $n \to \infty$ in a precise manner. This also requires careful estimates in the error terms.  We note that this shrinking of the lips of the lens appeared already in the paper \cite[Sec. 6.2]{DIK}. There it was used to extend the  proofs of their results for analytic functions to  smooth functions by an approximation argument. In our proof it is not only the regularity  of the function that requires the shrinking, but more the fact that we are  dealing with mesoscopic scales.

Although we only work with the CUE, we strongly believe that the approach in this paper would work for a more general class of determinantal point process for which the kernel defines an integrable operator. For instance, the eigenvalues of  a Unitary Ensemble on the real line form a determinantal point process with kernel given by the Christoffel-Darboux kernel for the corresponding orthogonal polynomials.  After inserting the detailed asymptotics for those  polynomial derived in \cite{DKMVZ1,DKMVZ2} and a careful computation, we believe that the same transition can be proved for this class of models for any point in the \emph{bulk}. We leave this as an open problem and only provide evidence for this universality by proving  that the same laws appear when we consider  linear satistics for the thinned in a \emph{growing box} for the sine process. We believe that sine kernel asymptotics are a key force behind Theorems \ref{th:transCUEfirsttwocases} and \ref{th:transCUEthirdcase}. 

 We introduce a parameter $L$ and define 
$$X_L^{(\gamma)}(f)= \sum f(x_j /L),$$
where the sum is over all $x_j$ taken randomly from the thinned sine process, i.e. the determinantal point process with kernel $$\gamma K_{sine}(x,y)= \gamma \frac{\sin(x-y)}{\pi(x-y)}. $$
We think of $L$ as a large parameter here. Since $f$ has compact support the linear statistics only takes points into account that lie in intervals with size of order $\sim L$.  We then scale the thinning parameter with
$$\gamma= \gamma_L=1-\frac{\kappa}{L^{\delta}}.$$
Then we have the following equivalent of Theorem \ref{th:transCUEfirsttwocases}. 
\begin{theorem} \label{th:transsinefirsttwocases}Let $\kappa> 0$ and take $\gamma=1-\frac{\kappa}{L^\delta}$. 
Then for any $f: \RR \to \RR$ with compact support and such that
\begin{equation}\label{eq:optimalconditionsine}
\int_{\mathbb R} |\mathcal F f(\omega)|^2(1+ |\omega|) {\rm d} \omega < \infty,
\end{equation}
we have the following cases:
\begin{itemize}
\item If $\delta >1$, then 
$$X_L^{(\gamma_L)} (f) -\E X_L^{(\gamma_L)} (f) \overset{\mathcal D}{\to } \mathcal N\left(0, \int_{\mathbb R} |\omega| |\mathcal F f(\omega)|^2  {\rm d} \omega\right),$$
as $L\to \infty$. 
\item If $0 <\delta <1$, then 
$$L^{(1-\delta)/2} \left(X_L^{(\gamma_L)} (f) -\E X_L^{(\gamma_L)} (f)\right) \overset{\mathcal D}{\to } \mathcal N\left(0, \frac{\kappa}{\pi} \int_{\mathbb R} f(x)^2   {\rm d} x \right),$$
as $L\to \infty$. 
\end{itemize}
\end{theorem}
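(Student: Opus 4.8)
The plan is to prove this by the method of moments applied to the cumulant generating function, exactly as for Theorems \ref{th:transCUEfirsttwocases} and \ref{th:transCUEthirdcase}, but replacing the CUE kernel by the sine kernel and working directly on $\RR$ rather than on the circle. Since the thinned sine process is determinantal with kernel $\gamma_L K_{sine}$, the Laplace transform of the centered linear statistic is a Fredholm determinant,
$$\E\left[\e^{\lambda(X_L^{(\gamma_L)}(f)-\E X_L^{(\gamma_L)}(f))}\right]= \e^{-\lambda \E X_L^{(\gamma_L)}(f)}\det\left(1+(\e^{\lambda f(\cdot/L)}-1)\gamma_L K_{sine}\right),$$
and the aim is to compute the $L\to\infty$ asymptotics of its logarithm. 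First I would record the exact variance identity for the thinned sine process analogous to \eqref{eq:variancegamma}: the variance splits as $\frac{\gamma_L(1-\gamma_L)}{\pi}\int_\RR f(x/L)^2\d x + \gamma_L^2\,\V[X_L^{(1)}(f)]=\frac{L\gamma_L(1-\gamma_L)}{\pi}\int f^2 + \gamma_L^2\,\V[X_L^{(1)}(f)]$, where the second term stays bounded and tends to $\int|\omega||\F f(\omega)|^2\d\omega$ by the classical sine-kernel CLT. With $\gamma_L=1-\kappa L^{-\delta}$, the first term behaves like $\frac{\kappa}{\pi}L^{1-\delta}\int f^2$, which blows up when $\delta<1$ (classical regime), vanishes when $\delta>1$ (sine/CUE regime), and is $O(1)$ at $\delta=1$; this already identifies the three cases and the correct normalizations.

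The core analytic step is the Riemann–Hilbert analysis of the resolvent of $\gamma_L K_{sine}$ (more precisely of $(\e^{\lambda f_L}-1)\gamma_L K_{sine}$, where $f_L=f(\cdot/L)$), viewed as an integrable operator. I would follow the steepest-descent scheme used for the CUE in Section \ref{Proof of CUE}: write down the $2\times 2$ RH problem whose jump involves the symbol $1+(\e^{\lambda f_L(x)}-1)\gamma_L$ on $\RR$, perform the standard transformations (normalization at infinity, opening of a lens), and crucially open the lens with an $L$-dependent width so that the lips shrink to $\RR$ at a controlled rate as $L\to\infty$, since $f_L$ varies on scale $L$. The contribution of the jump then separates into (i) a "global parametrix" piece, solved by a scalar Szegő-type function $\exp\left(\frac{1}{2\pi\i}\int_\RR \frac{\log(1+(\e^{\lambda f_L(s)}-1)\gamma_L)}{s-z}\d s\right)$, whose effect on $\log\det$ is computed by a standard computation and yields, after rescaling $s\mapsto s/L$, the two limiting terms — the quadratic $\sigma^2$ term coming from the $\gamma_L^2$-part (bounded, surviving only when $\delta\ge1$) and a term of order $L^{1-\delta}$ coming from the $\gamma_L(1-\gamma_L)$-part; and (ii) local parametrices near the (finitely many, since $f$ has compact support and is regular) endpoints of the support and near any zeros of the symbol, plus error bounds. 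One then identifies the limit of $\frac{1}{L^{1-\delta}}$ times the leading term, or of the $O(1)$ term, depending on $\delta$, to read off the Gaussian variances $\frac{\kappa}{\pi}\int f^2$ (for $\delta<1$, after the $L^{(1-\delta)/2}$ normalization, using that a sequence of laws whose cumulant generating functions are $\tfrac{1}{2}\lambda^2 v_L(1+o(1))$ with $v_L\to\infty$ becomes Gaussian after dividing by $\sqrt{v_L}$) and $\int|\omega||\F f(\omega)|^2\d\omega$ (for $\delta>1$, reducing to the known sine-kernel CLT).

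The main obstacle, as in the CUE case, is controlling the error terms in the RH analysis uniformly as the lips of the lens shrink toward $\RR$ at a rate tied to $L$: one must balance the width of the lens (which governs the size of the lens-jump after the $g$-function-type conjugation) against the smoothness of $f_L$ and the location of the lips relative to the support of $f_L$ and the symbol's zeros, so that the small-norm estimate for the final RH problem survives and the correction to $\log\det$ is $o(L^{1-\delta})$ in the classical regime and $o(1)$ in the other. The regularity hypothesis \eqref{eq:optimalconditionsine} is exactly what is needed to push the Szegő-function estimates through at the borderline $\tfrac{1}{2}+$ of Sobolev regularity; the rest of the argument (extracting Gaussianity from the cumulant expansion, handling the deterministic centering $\E X_L^{(\gamma_L)}(f)$) is routine and parallels the CUE proof, so I would present it briefly and refer back to Section \ref{Proof of CUE} for the shared estimates. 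The proof at the critical line $\delta=1$ is not claimed in this theorem and would be handled separately, in analogy with Theorem \ref{th:transCUEthirdcase}.
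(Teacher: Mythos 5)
Your overall scheme (express the Laplace transform as a Fredholm determinant, use the integrable-operator RH problem, open a lens, extract the Szeg\H{o}-type global parametrix, and read off the limiting cumulant generating function, then normalize by $L^{(1-\delta)/2}$ when $\delta<1$) is the right one and matches the paper's. But two of the mechanisms you invoke are not how the paper handles the sine case, and one genuine gap remains.

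First, the paper does \emph{not} shrink the lens for the sine process and does \emph{not} need local parametrices. It first performs a dilation $T(z)=m(Lz)$, which turns $\varphi_{L,t}(z)=1-t\gamma_L(1-\e^{\lambda L^{-s}f(z/L)})$ into a symbol $\varphi_t(x)=\varphi_{L,t}(Lx)$ depending on $L$ only through the thinning parameter; after that the exponential factors $\e^{\pm 2\i Lz}$ supply all the decay, and the lens can be opened at a fixed width $\rho$. No local parametrices are required because $\lambda$ is restricted to a small disc on which $\varphi_t$ is uniformly close to $1$ on the strip $|\im z|\le\rho$, so the symbol has no zeros and the support of $f$ introduces no singularities. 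Proposing to shrink the lens, in analogy with the CUE proof, and to build local parametrices at the ``endpoints of the support and zeros of the symbol'' both indicate a misreading of where the CUE-specific difficulties came from (they came from $f_{\alpha,N}$ being a truncated Fourier series on the circle, not from the mesoscopic scaling per se).

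Second, and this is a real gap: you never say how to make $f$ analytic so that RH Problem~\ref{RHP:section_preliminaries:initial_problem} can be posed in the first place. For the CUE this was done by truncating the Fourier series, which does not transfer to $\RR$. The paper instead proves the theorem first for the dense analytic family $f(x)=p(x)\e^{-x^2}$ (entire, with rapid decay in horizontal strips, hence directly admissible in the RH analysis with a fixed lens), and then extends to all $f$ satisfying \eqref{eq:optimalconditionsine} by a variance/equicontinuity argument using Lemma~\ref{lem:momenttovariance} and the exact variance formula \eqref{eq:section_preliminaries:variance_of_thinned_Sine}. Without some replacement for this two-step (analytic-dense-class plus density) argument, your RH analysis has no admissible symbol to act on for a general $f$ with merely $H^{1/2}$-type regularity, and the claim in your last paragraph that \eqref{eq:optimalconditionsine} ``is exactly what is needed to push the Szeg\H{o}-function estimates through'' glosses over precisely this missing approximation step.
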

The following is the equivalent of Theorem \ref{th:transCUEthirdcase}.
\begin{theorem} \label{th:transsinethirdcase} Let $\delta=1$, so that $\gamma_L=1-\kappa/L$ for $\kappa>0$. Then, for any $f: \RR \to \RR$ with compact support and such that
$$\int_{\mathbb R} |\mathcal F f(\omega)|^2(1+ |\omega|)^{1+\eps} {\rm d} \omega < \infty,$$
for some $\eps>0$, 
we have
$$X_L^{(\gamma_L)}(f)- \E\left[X_L^{(\gamma_L)}(f)\right]
\overset{\mathcal D}{\to }  \mathcal L_{f,\kappa/\pi,\sigma_f},$$
as $L \to \infty$, where $\sigma_f^2= \int_{\mathbb R} |\omega| |\mathcal F f(\omega)|^2 {\rm d} \omega$. 
\end{theorem}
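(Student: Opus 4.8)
The plan is to follow exactly the same strategy as in the proof of Theorem \ref{th:transCUEthirdcase}, but with the CUE kernel replaced by the sine kernel throughout. The starting point is that the moment generating function of the linear statistic is a Fredholm determinant: for the thinned sine process with kernel $\gamma_L K_{sine}$, one has
$$\E\left[\e^{\lambda X_L^{(\gamma_L)}(f)}\right] = \det\left(1 + (\e^{\lambda f(\cdot/L)}-1)\gamma_L K_{sine}\right),$$
acting on $L^2(\RR)$ (the support of $f$ keeps everything trace class). I would first reduce this determinant to the resolvent of the scaled operator by the standard differentiation-in-$\lambda$ trick, writing $\frac{d}{d\lambda}\log\det(\cdots)$ as a trace involving $(1+\cdots)^{-1}\gamma_L K_{sine}$. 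After the change of variables $x\mapsto Lx$, the sine kernel is scale invariant — $K_{sine}(Lx,Ly)\,L\,dx = K_{sine}^{(L)}(x,y)\,dx$ where the rescaled kernel $\frac{\sin L(x-y)}{\pi(x-y)}$ plays precisely the role that $\hat K_n$ plays at scale $n^{1-\alpha}$ in the CUE case with the identification $L \leftrightarrow n^{1-\alpha}$, i.e. $\delta=1$ corresponds to $\alpha=\delta$ on the critical line. So the asymptotic analysis is structurally identical.

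The core analytic input is the Riemann–Hilbert analysis of the integrable operator $\gamma_L K_{sine}$. The sine kernel is the integrable kernel with $z^n w^{-n}$ replaced by $\e^{\i L(x-y)}$, so it admits the same RH formulation used in Section \ref{Proof of CUE}; one sets up the $2\times 2$ RH problem for the resolvent, performs the opening of the lens with $L$-dependent lips converging to the real axis at a rate tuned to $L$ (this is the device the authors emphasize is needed at mesoscopic scales), and tracks the error terms. The jump matrices on the lips decay because of the exponential factors $\e^{\pm\i L x}$ once the contours are pushed off the axis, and the local parametrices are built exactly as before. The scaling $\gamma_L = 1-\kappa/L$ means $1-\gamma_L = \kappa/L$, which is precisely the critical balance in \eqref{eq:variancegamma} transposed to this setting: the Poisson-type term contributes $\frac{L\cdot(1-\gamma_L)}{2\pi}\cdot\frac1L\int f^2 \to \frac{\kappa}{2\pi}\int f^2$ — but here on the line one gets $\kappa/\pi$ because the sine kernel normalization differs from the CUE one by the factor that turns $\frac{\kappa}{2\pi}$ into $\frac{\kappa}{\pi}$ (this is the same factor-of-two bookkeeping that already appears comparing Theorems \ref{th:transCUEfirsttwocases} and \ref{th:transsinefirsttwocases}).

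From the RH asymptotics I would extract, as in the CUE proof, that
$$\log\E\left[\e^{\lambda\left(X_L^{(\gamma_L)}(f)-\E X_L^{(\gamma_L)}(f)\right)}\right] \longrightarrow \frac{\sigma_f^2\lambda^2}{2} + \frac{\kappa}{\pi}\int_\RR\left(\e^{-\lambda f(x)}-1+\lambda f(x)\right)\d x,$$
with $\sigma_f^2 = \int_\RR |\omega||\mathcal Ff(\omega)|^2\d\omega$ coming from the $\gamma_L^2$-term (the Szegő-type contribution, which survives since $\gamma_L\to 1$) and the Lévy-type term coming from the $\gamma_L(1-\gamma_L)$ cross term. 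Recognizing the right-hand side as $\log\E[\e^{\lambda\mathcal L_{f,\kappa/\pi,\sigma_f}}]$ and invoking convergence of moment generating functions on a strip (using the stronger smoothness hypothesis $\int|\mathcal Ff(\omega)|^2(1+|\omega|)^{1+\eps}\d\omega<\infty$ to get the needed uniform control and the continuity of $f$ so that the Lévy integral makes sense) gives convergence in distribution. The main obstacle, just as in the CUE case, is controlling the error terms in the RH analysis uniformly as the lens lips shrink to the axis — the $L$-dependent opening must be chosen delicately so that the contributions from the lips, the local parametrices near the endpoints of $\mathrm{supp}(f)$, and the global parametrix all remain under control; the smoothness assumption on $f$ enters precisely here. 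Since the sine kernel case is in fact the cleaner of the two (no need to pass from eigenvalues on the circle to eigenangles, and exact scale invariance), I expect this theorem to follow from the CUE proof with only notational changes, and I would present it as such, indicating the dictionary $n^{1-\alpha}\leftrightarrow L$, $\alpha=\delta\leftrightarrow\delta=1$, and the normalization change $\frac{1}{2\pi}\leftrightarrow\frac{1}{\pi}$.
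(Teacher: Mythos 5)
Your overall plan — Fredholm determinant, RH problem for the integrable sine kernel, steepest descent, read off the cumulant generating function — is the right skeleton, and the dictionary $L\leftrightarrow n^{1-\alpha}$ and $\tfrac{1}{2\pi}\leftrightarrow\tfrac{1}{\pi}$ is correct. But three points deviate from what actually has to happen, and one of them is a genuine gap.

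First, the paper does \emph{not} shrink the lens in the sine case, and you should not expect to have to. The essential first move (Definition \ref{def:section_proof_of_sine:t_function}) is a dilation $T(z)=m(Lz)$, after which the oscillatory factors become $\e^{\pm 2\i Lz}$ while $\varphi_t(z)=\varphi_{L,t}(Lz)$ no longer depends on $L$ through the test function (only through $\gamma_L$). Because of this, one can open an $L$-\emph{independent} lens of fixed half-width $\rho>0$ and still win $\e^{-2\rho L}$ on the lips (Lemma \ref{lem:section_proof_of_sine:solution_R_function}). The shrinking lips in the CUE proof were forced by the Fourier truncation $f_{\alpha,N}$ having $N\sim n^{1-\alpha}$ modes; that phenomenon does not recur here. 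Insisting on an $L$-dependent lens would introduce unnecessary technicalities and misrepresents why the sine case is ``cleaner.''

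Second, there are no local parametrices in either proof — only a global parametrix given by Cauchy transforms of $\log\varphi$ — so the clause about ``local parametrices near the endpoints of $\mathrm{supp}(f)$'' should be dropped.

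Third, and this is the real gap: you never say how to handle the non-analyticity of $f$. The Sobolev hypothesis on $\mathcal Ff$ does not make $f$ analytic, and the RH problem needs an analytic symbol. In the CUE case the paper truncates the Fourier series of $f_\alpha$; that device does not transfer to the line. The paper instead first proves the result for $f(x)=p(x)\e^{-x^2}$ (entire, with controlled growth in horizontal strips), and then extends by density in the norm $\int |\mathcal Ff(\omega)|^2(1+|\omega|)^{1+\eps}\d\omega$ using a variance estimate as in Lemma \ref{lem:momenttovariance}; crucially, in the critical case $\delta=1$ the limit law is only continuous in this norm for $\eps>0$, which is exactly why the hypothesis is strict here but not in Theorem \ref{th:transsinefirsttwocases}. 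Without some such reduction your RH analysis cannot even be set up, so this step must be supplied.
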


The proofs of Theorem \ref{th:transsinefirsttwocases} and \ref{th:transsinethirdcase} will be given in Section \ref{proof of Sine}. 

\subsubsection*{Overview of the rest of the paper}

The rest of the paper is organized as follows. In Section 2 we discuss preliminaries on determinantal point process and recall the Riemann-Hilbert problem for  integrable operators, from which we will derive the asymptotic behavior of the moment generating function of the linear statistics. Then in Section 3 we prove Theorems \ref{th:transCUEfirsttwocases} and \ref{th:transCUEthirdcase}  concerning the thinned CUE. In Section 4 we deal with the thinned sine process and prove Theorems \ref{th:transsinefirsttwocases} and \ref{th:transsinethirdcase}. Since the proofs for the thinned sine process are similar to the thinned CUE, and at certain points even simpler, we will allow ourselves to  be brief in our discussion.

\subsubsection*{Acknowledgements}

We thank Kurt Johansson,  Gaultier Lambert and Boualem Djehiche for fruitful and inspiring discussions.  We also thank the anonymous referees for many valuable comments that helped improving the presentation of the paper. 
\section{Preliminaries}
In this section we will recall the main tools that will be used.  In particular we will setup the Riemann-Hilbert problem that we will analyze asymptotically in the next sections. 

\subsection{Determinantal point process and linear statistics}
We start by recalling some basic notions from determinantal point processes.  For more details on determinantal point processes we refer to \cite{AGZ,Borodin,Johansson05,Sosh}.

 If the kernel $ \phi K $ that characterize the point process in \eqref{eq:section_preliminaries:def_of_determinantal_point_process}, defines an operator from $ \mathbb L^2(E) $ to $ \mathbb  L^2(E) $ given by 
\begin{equation}
 \phi K(h)(x) = \phi(x)\int_Eh(y)K(x,y)\, {\rm d} y,
\end{equation}
then the right hand side of \eqref{eq:section_preliminaries:def_of_determinantal_point_process} defines a Fredholm determinant of the operator. We can therefore write \eqref{eq:section_preliminaries:def_of_determinantal_point_process} as  
\begin{equation}\label{eq:section_preliminaries:def_of_fredholm_determinant}
 \E\left[\prod_{k}(1+\phi(x_k))\right] = \det(I+\phi K)_{\mathbb L^2(E)}.
\end{equation}
For a determinantal point process the characteristic function of the linear statistic for a test function  $ h:\RR \rightarrow \RR $, i.e.
\begin{equation}
X(h) = \sum h(x_k),
\end{equation}
takes a particularly nice form. Indeed, by taking $ \phi = {\rm e}^{ {\rm i} t h} -1$ in \eqref{eq:section_preliminaries:def_of_fredholm_determinant}, we find
\begin{equation}\label{eq:momentgtofredholm}
\E \left[ {\rm e}^{ {\rm i} t  X(h)}\right] 
= \det \left(I+ ({\rm e}^{ {\rm i} t h}-1) K\right)_{\mathbb L_2(E)}.
\end{equation}
From here  we can find useful expressions for the expectation and variance of the linear statistic, giving 
\begin{equation}
 \E[X(h)] = \int_{E}h(x)K(x,x)\d x
\end{equation}
and
\begin{equation}
 \V[X(h)] = \int_{E}h(x)^2K(x,x)\d x - \int_{E}\int_{E}h(x)h(y)K(x,y)K(y,x) \, {\rm d} x {\rm d}y,
\end{equation}
see also, e.g. \cite[Sec. 4.2]{AGZ}. Note that if we have $\int_E K(x,y) K(y,x)  \d y = K(x,x)$ we can write alternatively 
\begin{equation}
 \V[X(h)] =\frac12 \int_{E}\int_{E}(h(x)-h(y))^2 K(x,y)K(y,x) \, {\rm d} x {\rm d}y.
\end{equation}
It is well known that the CUE has a determinantal structure with kernel \eqref{eq:CUEkernel}. The expectation and variance of the linear statistic of the (non-thinned) CUE is given by
\begin{equation}\label{eq:section_preliminaries:expectation_of_CUE}
 \E_\text{CUE}[X(h)] = n \hat{h}(0)
\end{equation}
and
\begin{align}\label{eq:section_preliminaries:variance_of_CUE}
 \V_\text{CUE}[X(h)] = 
  & \frac{1}{8\pi^2}\int_{-\pi}^\pi\int_{-\pi}^\pi (h(x)-h(y))^2\frac{\sin^2(\frac{n}{2}(x-y))}{\sin^2(\frac{1}{2}(x-y))}\d x\d y\\
&=  \sum_{k=-\infty}^\infty \min(n,|k|)|\hat{h}(k)|^2 
\end{align}
where $ n $ is the dimension of the CUE and $\hat h(k)$ is the $k$-th Fourier coefficient. The last equality can be proved by standard Fourier arguments, See the beginning of the proof of \cite[Theorem 8.4.5]{Pastur11}.

The sine process has also a determinantal structure with kernel $K(x,y)=\sin(x-y)/\pi(x-y)$. The expectation and variance of the linear statistic are given by
\begin{equation}
 \E_\text{sine}[X(h)] = \frac{1}{\pi}\int_\RR h(\xi)\d\xi
\end{equation}
and 
\begin{equation}\label{eq:section_preliminaries:variance_of_Sine}
 \V_\text{sine}[X(h)] = \frac{1}{2\pi^2}\int_\RR \int_\RR \left(\frac{h(x)-h(y)}{x-y}\right)^2\sin^2(x-y)\d x\d y.
\end{equation}

The thinning preserves the determinantal structure of the process and the kernel is changing by a factor $ \gamma $, cf. \eqref{eq:thinnedkernel}. The expectation and variance of the linear statistic of the thinned CUE are then given by
\begin{equation}\label{eq:section_preliminaries:mean_of_thinned_CUE}
 \E_\text{Thinned CUE}[X(h)] = \gamma \E_\text{CUE}[X(h)]
\end{equation}
and 
\begin{align}\label{eq:section_preliminaries:variance_of_thinned_CUE}
 \V_\text{Thinned CUE}[X(h)] = & \frac{n\gamma}{2\pi}\int_{[-\pi,\pi]}h(x)^2 \d x \\
  & - \gamma^2\int_{[-\pi,\pi]}\int_{[-\pi,\pi]}h(x)h(y)K_{n,1}(x,y)K_{n,1}(y,x) \, {\rm d} x {\rm d}y\\
  = & \frac{ n \gamma(1-\gamma)}{2\pi}\int_{-\pi}^{\pi}h(x)^2\d x + \gamma^2\V_\text{CUE}[X(h)]
\end{align}
respectively. The same type of relation between the thinned and non-thinned sine process holds. That is,
\begin{equation}\label{eq:section_preliminaries:mean_of_thinned_Sine}
 \E_\text{Thinned sine}[X(h)] = \gamma \E_\text{sine}[X(h)]
\end{equation}
and 
\begin{equation}\label{eq:section_preliminaries:variance_of_thinned_Sine}
 \V_\text{Thinned sine}[X(h)] = \frac{\gamma(1-\gamma)}{\pi}\int_\RR h(x)^2\d x + \gamma^2\V_\text{sine}[X(h)].
\end{equation}
We will consider the thinned process on a mesoscopic scale for the CUE and in a growing box for the sine process. The effect on the expectation and variance of the  linear statistic is a dilation of the function $h$. Note that \eqref{eq:variancegamma} follows by taking $h(x)=f(n^{1-\alpha} x)$ and a change of variables.

In the rest of the paper we will drop the subscript on the measure, instead it will be clear what process we consider. 

\subsection{Integrable operators and Riemann-Hilbert problems} 

Let $ K $ be the operator defined by the kernel given from the thinned CUE or the thinned sine process, that is $  K = K_{n,\gamma} $ or $ K = K_{\text{sine},\gamma} $. We write
\begin{equation}\label{eq:section_preliminaries:integrable_kernel}
 K(z,w) = \left\{
 \begin{matrix}
  \frac{f^{(1)}(z)^Tf^{(2)}(w)}{z-w}, & z \neq w, \\
  f^{(1)'}(z)^Tf^{(2)}(w), & z = w,
 \end{matrix}\right.
\end{equation}
with
\begin{align}
 f^{(1)}(z) & = \frac{\gamma}{2\pi i}(z^n,1)^T, \\
 f^{(2)}(w) & = (w^{-n},-1)^T,
\end{align}
for $ z,w \in \UC $ for the CUE and 
\begin{align}
 f^{(1)}(z) & = \frac{\gamma}{2\pi i}(e^{iz},e^{-iz})^T, \\
 f^{(2)}(w) & = (e^{-iw},-e^{iw})^T,
\end{align}
for $ z,w \in \RR $ for the sine process. The rest of the preliminaries are stated for the kernels defined above. 

One approach to analyze \eqref{eq:section_preliminaries:def_of_fredholm_determinant} is to take the logarithmic derivative,
\begin{multline}\label{eq:section_preliminaries:logarithmic_derivative}
 \frac{\partial}{\partial t}\log \det(I-t\phi K)= -\frac{1}{t}\Tr\left((I-t\phi K)^{-1}t\phi K\right).\\
 = -\frac{1}{t}\int_{E}((I-t\phi K)^{-1}t\phi K)(x,x)dx,
\end{multline}
as long as both sides make sense, see \cite[eq. (67)]{Deift00}.

With this approach we need to get hold of the resolvent in an explicit way, which in general can be a difficult task. However, the form of \eqref{eq:section_preliminaries:integrable_kernel} shows that the  operators defined by the CUE kernel and the sine kernel  are examples of  integrable operators. For integrable operators it holds (under mild conditions) that the resolvent of the operator is again an integrable operator and the kernel can be expressed in terms of a solution to a Riemann-Hilbert problem. We refer to \cite{Deift99,Its90,DIZ97} (and in particular \cite[Ex. 3]{Deift99}) for an excellent discussion on this topic. Consider the following Riemann-Hilbert problem. For simplicity, we will assume some analyticity on the function $ \phi $.

\begin{RHP}\label{RHP:section_preliminaries:initial_problem}
Let $ \Gamma_m $ be the unit circle or the real line and let $ \phi $ be an analytic function in a neighborhood of each point $ x \in \Gamma_m $. Find an $ m:\CC\backslash \Gamma_m \rightarrow \CC^{2\times 2} $ such that
\begin{itemize}
 \item $ m $ is analytic in $ \CC \backslash \Gamma_m $
 \item $ m_+(x) = m_-(x)J_m(x) $  for $ x \in \Gamma_m $ where
 \begin{equation}
  J_m(x) = I - 2\pi i t\phi(x)f^{(1)}(x)f^{(2)}(x)^T
 \end{equation}
 \item $ m(z) = I + \Ordo(z^{-1}) $ as $ z \rightarrow \infty $
\end{itemize}
where $ m_+ $ and $ m_- $ are the limits of $ m $ as $ z $ tends to $ \Gamma_m $ from the left and right side respectively and $ f^{(1)} $ and $ f^{(2)} $ are defined by the kernel \eqref{eq:section_preliminaries:integrable_kernel}.
\end{RHP}
For more details and background on Riemann-Hilbert problems we refer to \cite{Deift99,Deift00} and the references therein. 

If there is a unique solution $ m $ to the Riemann-Hilbert problem above, then, with
\begin{equation}\label{eq:section_preliminaries:capital_f_one}
 F^{(1)}(x) = m_+(x)f^{(1)}(x)
\end{equation}
and
\begin{equation}\label{eq:section_preliminaries:capital_f_two}
 F^{(2)}(x) = (m_+(x)^{-1})^Tf^{(2)}(x),
\end{equation}
the kernel of the resolvent of $ t\phi K $, at least if $ \|t\phi\|_{L^2(E)} < 1 $, is given by
\begin{equation}
 (I-t\phi K)^{-1}t\phi K(x,y) = \left\{
 \begin{matrix}
  t\phi(x) \frac{F^{(1)}(x)^TF^{(2)}(y)}{x-y}, & x \neq y \\
  t\phi(x)F^{(1)'}(x)^TF^{(2)}(x), & x = y
 \end{matrix}\right.
\end{equation}
where $ (I-t\phi K)^{-1}t\phi K(x,y) $ is the kernel of the resolvent $ (I-t\phi K)^{-1}t\phi K $. The extra condition on $ \phi $ is to make sure that the resolvent exists as a bounded operator and that it has a kernel. We want to use this in \eqref{eq:section_preliminaries:logarithmic_derivative} and integrate with respect to $ t $ from zero to one. If $ E = \RR $ it requires some decay on $ \phi $ to make sure that the right hand side of \eqref{eq:section_preliminaries:logarithmic_derivative} makes sense, quadratic decay is sufficient. We summarize with the following lemma. 
\begin{lemma}[cf. \cite{Deift99} eq. (62) and (69)]\label{lem:section_preliminaries:characteristic_function_as_integral}
Let $ K = K_{n,\gamma} $ or $ K = K_{\text{sine},\gamma} $ defined by \eqref{eq:section_preliminaries:integrable_kernel} and $ E \subset \CC $ be the set where $ K $ is defined, that is, $ E = \UC $ if $ K = K_{n,\gamma} $ and $ E = \RR $ if $ K = K_{\text{sine},\gamma} $. Let $ \phi $ be an analytic function in some neighborhood of $ E $ such that $ \|\phi\|_{\L^{\infty}(E)} < 1 $. If $ E = \RR $ then we require $ \phi $ to have sufficient decay on $ \RR $. Then
\begin{equation}\label{eq:section_preliminaries:log_of_determinant_integral_representation}
 \log \det(I-\phi K) = -\int_0^1\int_{E}\phi(x)F^{(1)'}(x)^TF^{(2)}(x)dxdt.
\end{equation}
where $ F^{(1)} $ and $ F^{(2)} $ are defined by \eqref{eq:section_preliminaries:capital_f_one} and \eqref{eq:section_preliminaries:capital_f_two}.
\end{lemma}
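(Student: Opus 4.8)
\emph{Proof proposal.} The plan is to assemble the lemma from two ingredients already recorded in this section: the logarithmic--derivative identity \eqref{eq:section_preliminaries:logarithmic_derivative} for Fredholm determinants, and the integrable--operator formula for the diagonal of the resolvent kernel in terms of the solution of the Riemann-Hilbert problem \ref{RHP:section_preliminaries:initial_problem}. First I would check that every object involved is well defined on the whole segment $t\in[0,1]$. For both kernels in \eqref{eq:section_preliminaries:integrable_kernel} the non-thinned operator is a projection of norm one on $\L^2(E)$ and $K$ is $\gamma$ times it with $\gamma<1$, so the operator norm of $t\phi K$ on $\L^2(E)$ is at most $\|\phi\|_{\L^\infty(E)}<1$ for every $t\in[0,1]$; hence $I-t\phi K$ is boundedly invertible and $(I-t\phi K)^{-1}t\phi K$ exists as a bounded operator. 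Moreover $\phi K$ is trace class: it has finite rank when $K=K_{n,\gamma}$, and when $K=K_{\mathrm{sine},\gamma}$ the assumed decay of $\phi$ on $\RR$ (quadratic decay suffices) makes $\phi K_{\mathrm{sine},\gamma}$ trace class. Consequently $t\mapsto\det(I-t\phi K)$ is a nonvanishing, differentiable function on $[0,1]$ and \eqref{eq:section_preliminaries:logarithmic_derivative} holds there.

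Next I would bring in the integrable structure. Since $f^{(1)}(x)^Tf^{(2)}(x)=0$ in both cases, the jump matrix $J_m(x)=I-2\pi\i t\phi(x)f^{(1)}(x)f^{(2)}(x)^T$ is unimodular; by the general theory of integrable operators (see \cite{Deift99,Its90,DIZ97}) the invertibility of $I-t\phi K$ just established guarantees that the Riemann-Hilbert problem \ref{RHP:section_preliminaries:initial_problem} has a unique solution $m=m_t$, analytic in a neighborhood of $\Gamma_m$ together with $f^{(1)}$, so that $F^{(1)},F^{(2)}$ defined by \eqref{eq:section_preliminaries:capital_f_one}--\eqref{eq:section_preliminaries:capital_f_two} and the derivative $F^{(1)'}$ all make sense. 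The same theory identifies the diagonal of the resolvent kernel as $(I-t\phi K)^{-1}t\phi K(x,x)=t\phi(x)F^{(1)'}(x)^TF^{(2)}(x)$. Inserting this into \eqref{eq:section_preliminaries:logarithmic_derivative} the factor $1/t$ cancels, giving
$$\frac{\partial}{\partial t}\log\det(I-t\phi K)=-\int_{E}\phi(x)F^{(1)'}(x)^TF^{(2)}(x)\,\d x.$$
Finally I would integrate this identity in $t$ over $[0,1]$. Since $\det(I-0\cdot\phi K)=1$, the left-hand side integrates to $\log\det(I-\phi K)$, and using that $t\mapsto m_t$, hence $F^{(1)},F^{(2)}$, depends continuously on $t$ (so Fubini applies) the right-hand side becomes $-\int_0^1\int_E\phi(x)F^{(1)'}(x)^TF^{(2)}(x)\,\d x\,\d t$, which is \eqref{eq:section_preliminaries:log_of_determinant_integral_representation}.

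The step requiring the most care is not conceptual but a matter of controlling the analytic hypotheses in the non-compact case $E=\RR$: one must ensure that the decay imposed on $\phi$ is strong enough that $\phi K_{\mathrm{sine},\gamma}$ is trace class, that the resolvent kernel is defined pointwise on its diagonal, and that the inner integral on the right-hand side converges uniformly in $t\in[0,1]$, so that both the differentiation of the Fredholm determinant and the interchange of the $t$- and $x$-integrations are legitimate. All of this is contained in \cite{Deift99}; the substance of the lemma is the explicit expression for the diagonal of the resolvent through the Riemann--Hilbert solution.
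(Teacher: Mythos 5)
Your proposal is correct and follows the same route the paper sketches just before the lemma: combine the logarithmic-derivative identity \eqref{eq:section_preliminaries:logarithmic_derivative} with the integrable-operator formula for the diagonal of the resolvent kernel in terms of $F^{(1)},F^{(2)}$, cancel the $1/t$, and integrate in $t$ from $0$ to $1$. The paper states the lemma as a summary of this discussion (citing \cite{Deift99}), and your added verifications—that $\|t\phi K\|<1$ for all $t\in[0,1]$ so the resolvent exists, that $\phi K$ is trace class (finite rank in the CUE case, trace class in the sine case under the decay hypothesis), and that $f^{(1)T}f^{(2)}\equiv 0$ so the jump is unimodular—are exactly the hypotheses being invoked implicitly there.
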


\section{Proof of Theorem \ref{th:transCUEfirsttwocases} and \ref{th:transCUEthirdcase}}\label{Proof of CUE}

To prove Theorem \ref{th:transCUEfirsttwocases} and \ref{th:transCUEthirdcase}, we will prove that there exists a disk around the origin, such that for compact sets in the disc the moment generating function converges uniformly to the moment generating function of the limiting distribution. That is, we want to prove that
\begin{equation}
 \E\left[\e^{\lambda n^{-s}(X_n^{(\alpha,\gamma_n)}(f)-\E[X_n^{(\alpha,\gamma_n)}(f)])}\right] \rightarrow M(\lambda)
\end{equation}
as $ n \rightarrow \infty $ where  $ s = \max(0,(\alpha-\delta)/2) $, and $ M $ is the moment generating function of the limiting distribution (a Gaussian in case $\alpha\neq \delta$). Given a $ \lambda $ close to the origin, we will consider (recall \eqref{eq:momentgtofredholm})
\begin{equation}
 \E\left[\e^{\lambda n^{-s}X_n^{(\alpha,\gamma_n)}(f)}\right] = \det(I+(\e^{\lambda n^{-s}f_{\alpha}}-1)K_{n,\gamma_n})_{\L^2([-\pi,\pi))} 
\end{equation}
where $ f_{\alpha}(\theta) = f\left(\theta n^{1-\alpha}\right) $.  We will use Lemma \ref{lem:section_preliminaries:characteristic_function_as_integral}  to analyze the  Fredholm determinant on the right-hand side asymptotically. To this end, we perform a Deift/Zhou steepest descent analysis on the associated Riemann-Hilbert problem. Many similar steepest descent analyses exist in the literature. We do not attempt to give a complete list of references  but only mention   \cite{Deift99} as an excellent introduction to the method which is also close to our setting (e.g. \cite[Example 3]{Deift99} contains a proof of the Strong Szeg\H{o} Limit Theorem). An important difference with \cite{Deift99} is that we are working with mesoscopic scales and, as we will see, this complicates the opening of the lens.

An important first issue is that $ f_{\alpha} $ is not analytic in a neighborhood of $ [-\pi,\pi) $. This  is inconvenient in the Deift/Zhou steepest descent analysis. We will therefore replace $ f $ by  an analytic approximation, by truncating  its Fourier series viewed as a function on the circle.  This we will discuss first.

\subsection{Approximation to an analytic function}\label{subsection:section_proof_of_CUE:approximation_to_an_analytic_function}

Let $ f $ be a real valued function with support in $ [-\pi,\pi] $ such that
\begin{equation}\label{eq:section_proof_of_CUE:epsilon_bound}
 \int_\RR|\F(f)(\xi)|^2(1+|\xi|)^{1+\epsilon} \d\xi < \infty
\end{equation}
for some $ \epsilon > 0 $. Take $f_\alpha(\theta)= f (\theta  n^{1-\alpha})$ and  define
\begin{equation}\label{eq:section_proof_of_CUE:def_of_approximated_function}
f_{\alpha,N}(z) = \sum_{k = -N}^{N}\hat{f}_{\alpha}(k)z^k
\end{equation}
where 
\begin{equation}
 \hat{f}(k) = \frac{1}{2\pi}\int_{-\pi}^{\pi}f(\theta)\e^{-\i k\theta}\d\theta
\end{equation} 
is the $ k $th Fourier coefficient.

Note that, since $ f $ has support in $ [-\pi,\pi] $, we can express \eqref{eq:section_proof_of_CUE:def_of_approximated_function} with the Fourier transform instead of the Fourier coefficient. That is, with a change of variable, 
\begin{equation}\label{eq:section_proof_of_CUE:fourier_coefficient_in_terms_of_fourier_transform}
 \hat{f}_{\alpha}(k) = \frac{1}{n^{1-\alpha}}\F(f)\left(\frac{k}{n^{1-\alpha}}\right).
\end{equation}
This will be used to obtain different kind of bounds on $ f_{\alpha,N} $.

A crucial point in our analysis is that $N$ will depend on $n$ when taking the limit $n \to \infty$.  We will specify a more precise choice of $N$ later, but for now we take $N$ such that 
\begin{equation}\label{eq:section_proof_of_CUE:def_N}
 n^{1-\alpha} \ll N \ll n^{\min(1,\frac{3}{2}(1-\alpha))},
\end{equation}
which is sufficient in the beginning of our discussion. 

We will need the following well-known lemma. 
\begin{lemma} \label{lem:momenttovariance}
Let $X$ and $Y$ be real random variables such that $\E X= \E Y$. Then 
$$\left|\mathbb E [{\rm e}^{{\rm i} t  X} ]-\mathbb E [{\rm e}^{{\rm i} t  Y} ] \right| \leq  |t| \sqrt{ \V [X-Y]},$$
for $t \in \mathbb R$. 
\end{lemma}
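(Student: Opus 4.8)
The plan is to write the difference of the two characteristic functions as the expectation of an increment and then apply the Cauchy--Schwarz inequality together with a mean-value estimate for $\e^{\i t x}$. First I would note that, by linearity of expectation,
\[
\E[\e^{\i t X}] - \E[\e^{\i t Y}] = \E\left[\e^{\i t X} - \e^{\i t Y}\right] = \E\left[\e^{\i t Y}\bigl(\e^{\i t (X-Y)} - 1\bigr)\right].
\]
Taking absolute values and using $|\e^{\i t Y}| = 1$ gives
\[
\left|\E[\e^{\i t X}] - \E[\e^{\i t Y}]\right| \leq \E\left|\e^{\i t(X-Y)} - 1\right|.
\]

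Next I would bound the integrand. The elementary inequality $|\e^{\i u} - 1| \leq |u|$ for real $u$ yields $\E|\e^{\i t(X-Y)} - 1| \leq |t|\, \E|X-Y|$, and then by the Cauchy--Schwarz inequality $\E|X-Y| \leq \sqrt{\E[(X-Y)^2]}$. At this point the hypothesis $\E X = \E Y$, i.e. $\E[X-Y] = 0$, enters: it gives $\E[(X-Y)^2] = \V[X-Y]$, so we arrive at $|t|\sqrt{\V[X-Y]}$, which is the claimed bound.

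There is essentially no serious obstacle here; the only point requiring a little care is the sharper form of the estimate one might want. Using just $|\e^{\i u}-1|\le |u|$ does not use centredness and would already give $|t|\sqrt{\E[(X-Y)^2]}$ after Cauchy--Schwarz, which equals $|t|\sqrt{\V[X-Y]}$ only because $\E[X-Y]=0$; so the hypothesis is exactly what makes the second moment collapse to the variance. (If one instead wanted to exploit centredness more strongly one could use $|\e^{\i u} - 1 - \i u| \le u^2/2$, but that is not needed for the stated inequality.) One should also remark that the bound is vacuous unless $X-Y \in L^2$, in which case $\E|X-Y| < \infty$ automatically, so all expectations above are finite and the manipulations are justified. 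This completes the proof.
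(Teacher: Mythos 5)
Your proof is correct and follows essentially the same chain of inequalities as the paper: factor out a unimodular term to reduce to $\E|\e^{\i t(X-Y)}-1|$, apply $|\e^{\i u}-1|\le|u|$, then Cauchy--Schwarz, and finally use $\E X=\E Y$ to replace the second moment by the variance. The extra remarks on where the centredness hypothesis enters and on finiteness are welcome but not substantively different from what the paper does.
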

\begin{proof}
This follows easily from 
$$
\left|\mathbb E [{\rm e}^{{\rm i} t  X} ]-\mathbb E [{\rm e}^{{\rm i} t  Y} ] \right|\leq\mathbb E \left[  \left|{\rm e}^{{\rm i} t  (X-Y)} -1\right|   \right] 
\leq |t| \mathbb E \left[  |X-Y|  \right]  \leq  |t|  \sqrt{\V [X-Y]},
$$
for $t \in \mathbb R$. \end{proof}

The following lemma tells us that  \eqref{eq:section_proof_of_CUE:def_of_approximated_function} is a good approximation.
\begin{lemma}\label{lem:section_proof_of_CUE:correct_approximation_of_f}
 Let $ f:\RR \rightarrow \RR $ have support in $ [-\pi,\pi] $ such that \eqref{eq:section_proof_of_CUE:epsilon_bound} holds. 
  Define $ f_{\alpha,N} $ by \eqref{eq:section_proof_of_CUE:def_of_approximated_function} and let $ s = \max(0,(\alpha-\delta)/2) $. If
 \begin{equation}
  n^{-s}\left(\sum_k f_{\alpha,N}\left(\e^{i\theta_k}\right) - \E\left[\sum_k f_{\alpha,N}\left(\e^{i\theta_k}\right)\right]\right) \rightarrow X
 \end{equation}
 as $ n \rightarrow \infty $ in distribution, for some random variable $ X $, where $ \{\theta_k\}_k$ are taken from the thinned $ CUE $ process, then
 \begin{equation}
  n^{-s}\left(\sum_k f_{\alpha}(\theta_k) - \E\left[\sum_k f_{\alpha}(\theta_k)\right]\right) \rightarrow X
 \end{equation}
 as $ n \rightarrow \infty $ in distribution.
\end{lemma}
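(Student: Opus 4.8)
The plan is to compare characteristic functions and invoke the L\'evy continuity theorem. Write $g_N(\theta):=f_{\alpha}(\theta)-f_{\alpha,N}(\e^{\i\theta})=\sum_{|k|>N}\hat f_{\alpha}(k)\e^{\i k\theta}$ for the tail of the Fourier series of $f_{\alpha}$, and set
$X_n:=n^{-s}\bigl(\sum_k f_{\alpha}(\theta_k)-\E[\sum_k f_{\alpha}(\theta_k)]\bigr)$ and
$Y_n:=n^{-s}\bigl(\sum_k f_{\alpha,N}(\e^{\i\theta_k})-\E[\sum_k f_{\alpha,N}(\e^{\i\theta_k})]\bigr)$, which both have mean $0$. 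Since $X_n-Y_n=n^{-s}\bigl(\sum_k g_N(\theta_k)-\E[\sum_k g_N(\theta_k)]\bigr)$, Lemma~\ref{lem:momenttovariance} gives
\[
\Bigl|\E\bigl[\e^{\i t X_n}\bigr]-\E\bigl[\e^{\i t Y_n}\bigr]\Bigr|\le |t|\,n^{-s}\sqrt{\V\bigl[\textstyle\sum_k g_N(\theta_k)\bigr]}\qquad(t\in\RR),
\]
where the variance is that of the thinned-CUE linear statistic with test function $g_N$. Since $\E[\e^{\i t Y_n}]$ converges by hypothesis to the characteristic function of the limiting law $X$, it remains only to show that $n^{-2s}\,\V[\sum_k g_N(\theta_k)]\to0$ as $n\to\infty$; the claim then follows from L\'evy's theorem, no tightness argument being needed as the limit is already a genuine random variable.

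For the variance we use the exact formula \eqref{eq:section_preliminaries:variance_of_thinned_CUE} with $h=g_N$, Parseval's identity, and the Fourier form \eqref{eq:section_preliminaries:variance_of_CUE} of the CUE variance; since the Fourier coefficients of $g_N$ are $\hat f_{\alpha}(k)$ for $|k|>N$ and $0$ otherwise, this yields
\[
\V\bigl[\textstyle\sum_k g_N(\theta_k)\bigr]=n\gamma(1-\gamma)\sum_{|k|>N}|\hat f_{\alpha}(k)|^2+\gamma^2\sum_{|k|>N}\min(n,|k|)\,|\hat f_{\alpha}(k)|^2 .
\]
Inserting \eqref{eq:section_proof_of_CUE:fourier_coefficient_in_terms_of_fourier_transform} turns each of these sums into a Riemann sum of step $n^{-(1-\alpha)}$ for a tail integral of $|\F f|^2$ — and, after splitting $\min(n,|k|)$ at $|k|=n$, of $|\xi|\,|\F f(\xi)|^2$ — over $\{|\xi|>M\}$ with $M=N\,n^{-(1-\alpha)}$. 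Since $M\to\infty$ by the left half of \eqref{eq:section_proof_of_CUE:def_N}, the regularity assumption \eqref{eq:section_proof_of_CUE:epsilon_bound} controls these tails, $\int_{|\xi|>M}|\F f|^2=O(M^{-(1+\eps)})$ and $\int_{|\xi|>M}|\xi|\,|\F f|^2=O(M^{-\eps})$. One then finds that the CUE term $\gamma^2\sum_{|k|>N}\min(n,|k|)|\hat f_{\alpha}(k)|^2=O\bigl((n^{1-\alpha}/N)^{\eps}\bigr)+O(n^{-\alpha\eps})\to0$ on its own, while the Poisson term satisfies $n\gamma(1-\gamma)\sum_{|k|>N}|\hat f_{\alpha}(k)|^2=O\bigl(n^{1-\delta+(1-\alpha)\eps}N^{-(1+\eps)}\bigr)$; multiplying the latter by $n^{-2s}$ with $s=\max(0,(\alpha-\delta)/2)$ and using $n^{1-\alpha}\ll N$ makes it $o(1)$ in every regime (it is $o(n^{\alpha-\delta})$ when $\alpha\le\delta$, and $O\bigl((n^{1-\alpha}/N)^{1+\eps}\bigr)$ when $\alpha>\delta$). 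Hence $n^{-2s}\,\V[\sum_k g_N(\theta_k)]\to0$, which is what we needed.

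The step requiring the most care is the replacement of the discrete tails $\sum_{|k|>N}|\F f(k\,n^{-(1-\alpha)})|^2$ (and its weighted analogue) by the corresponding integrals: $\F f$ is only known to belong to the weighted $L^2$ space of \eqref{eq:section_proof_of_CUE:epsilon_bound} and need not be monotone, so the Riemann-sum error must be bounded, again by means of that weighted control, by a quantity of the same order as the main estimate. Note that no upper bound on $N$ is used in this lemma; only $n^{1-\alpha}\ll N$ enters, so the admissible range \eqref{eq:section_proof_of_CUE:def_N} is more than enough, and the three regimes $\alpha<\delta$, $\alpha=\delta$, $\alpha>\delta$ are handled uniformly by the single computation above. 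Everything else is routine bookkeeping of powers of $n$ and $N$.
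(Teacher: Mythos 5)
Your overall strategy coincides with the paper's: reduce to a variance estimate via Lemma~\ref{lem:momenttovariance}, split $\V[X_n^{(\alpha,\gamma_n)}(f-f_N)]$ into the ``Poisson'' term $n\gamma_n(1-\gamma_n)\sum_{|k|>N}|\hat f_\alpha(k)|^2$ and the CUE term $\gamma_n^2\sum_{|k|>N}\min(n,|k|)|\hat f_\alpha(k)|^2$, verify that the prefactor $n^{\alpha-2s}\gamma_n(1-\gamma_n)$ stays bounded in all three regimes, and show that both tails vanish. That much is exactly what the paper does, and your power-of-$n$ bookkeeping in the three regimes is consistent.

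Where you diverge, and where the gap is, is the tail estimate itself. You reduce the two tail sums to ``Riemann sums for tail integrals over $\{|\xi|>M\}$'' and then invoke $\int_{|\xi|>M}|\F f|^2=\Ordo(M^{-(1+\eps)})$ and $\int_{|\xi|>M}|\xi||\F f|^2=\Ordo(M^{-\eps})$, explicitly flagging ``the step requiring the most care is the replacement of the discrete tails\ldots by the corresponding integrals.'' You then do not actually carry out that step, and the hint you offer — ``again by means of that weighted control'' — does not close it. Membership of $\F f$ in the weighted $L^2$ space \eqref{eq:section_proof_of_CUE:epsilon_bound} by itself gives no pointwise control of sample values, so it does not bound $n^{-(1-\alpha)}\sum_{|k|>N}|\F f(k n^{-(1-\alpha)})|^2(1+|kn^{-(1-\alpha)}|)^{1+\eps}$ by the corresponding weighted integral; a general weighted-$L^2$ function can have its mass concentrated precisely at (or precisely between) the sample points. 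What saves you in principle is the Paley--Wiener structure of $\F f$ (entire of exponential type, since $f$ is compactly supported), which makes a Plancherel--P\'olya type comparison available, but that is a nontrivial additional ingredient that you neither cite nor prove, and the polynomial weight $(1+|\xi|)^{1+\eps}$ takes you outside the class of exponential-type functions, so even that route needs an extra argument.

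The paper sidesteps the whole issue. For the Poisson term it uses that the \emph{unweighted} Riemann sum is not approximately but \emph{exactly} the integral, by Parseval for the $2\pi$-periodic function $f_\alpha$ (display \eqref{eq:section_proof_of_CUE:L2_bound}), so the tail $\sum_{|k|>N}$ is the exact value $\int_\RR|\F f|^2$ minus a head sum; the head sum over $|k|\le n^{1-\alpha}M$ is a Riemann sum over a \emph{fixed bounded interval} $[-M,M]$ on which $\F f$ is continuous, hence converges, and then a $\limsup/\liminf$ sandwich with $M\to\infty$ finishes it (display \eqref{eq:section_proof_of_CUE:vanishing_L2_tail}). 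For the CUE term the paper does not compare to an integral at all: it recognizes the full sum as $\V[X_n^{(\alpha,1)}(f)]$, bounds this via the double-integral representation \eqref{eq:section_preliminaries:variance_of_CUE} and the scale-invariant $H^{1/2}$ identity \eqref{eq:section_proof_of_CUE:sobolev_norm_in_two_ways}, and then uses the same head-minus-full-sum trick for the tail. This qualitative argument needs only continuity of $\F f$ and in fact works under $\eps=0$, as the paper remarks; your quantitative route inherently needs $\eps>0$ (acceptable under the lemma's hypothesis, but strictly narrower). The concrete fix for your write-up is to drop the attempted tail--integral comparison and replace it with the exact Parseval identity plus the $\limsup/\liminf$ argument, or, if you want to keep the quantitative bounds, to supply a Plancherel--P\'olya style lemma adapted to the weighted setting.
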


\begin{proof}
We will denote the function $ \theta\mapsto f_{\alpha,N}\left(\e^{\i n^{\alpha-1} \theta}\right) $ as $ f_N $. With this notation the lemma states that if
\begin{equation}
  n^{-s}\left(X_n^{(\alpha,\gamma_n)}(f_N) - \E\left[X_n^{(\alpha,\gamma_n)}(f_N)\right]\right) \rightarrow X
 \end{equation}
 as $ n \rightarrow \infty $, then
 \begin{equation}
  n^{-s}\left(X_n^{(\alpha,\gamma_n)}(f) - \E\left[X_n^{(\alpha,\gamma_n)}(f)\right]\right) \rightarrow X
 \end{equation}
 as $ n \rightarrow \infty $. By Lemma \ref{lem:momenttovariance}  it  remains to  show that the variance $\V\left[n^{-s}X_n^{(\alpha,\gamma_n)}(f - f_N)\right]$ tends to zero. 

By \eqref{eq:section_preliminaries:variance_of_thinned_CUE}, \eqref{eq:section_proof_of_CUE:fourier_coefficient_in_terms_of_fourier_transform} and \eqref{eq:section_preliminaries:variance_of_CUE}
 \begin{align}
 & \V[n^{-s}X_n^{(\alpha,\gamma_n)}(f-f_N))] \\
 = & n^{1-2s}\frac{\gamma_n(1-\gamma_n)}{2\pi}\int_{[-\pi,\pi]}(f_{\alpha}(\theta)-f_{\alpha,N}(\e^{i\theta}))^2\d\theta + \gamma_n^2\V[n^{-s}X_n^{(\alpha,1)}(f-f_N)] \\
 = & n^{\alpha-2s}\frac{\gamma_n(1-\gamma_n)}{2\pi}\frac{1}{n^{1-\alpha}}\sum_{|k|>N}\left|\F(f)\left(\frac{k}{n^{1-\alpha}}\right)\right|^2 \\ &  \qquad \qquad  + \gamma_n^2n^{-2s}\frac{1}{n^{1-\alpha}}\sum_{|k|>N}\frac{\min(n,|k|)}{n^{1-\alpha}}\left|\F(f)\left(\frac{k}{n^{1-\alpha}}\right)\right|^2.
 \end{align}
 Since $ n^{\alpha-2s}\gamma_n(1-\gamma_n) $ is bounded we need to estimate the tail to see that the first term tends to zero. Note that
\begin{align}\label{eq:section_proof_of_CUE:L2_bound}
 \frac{1}{n^{1-\alpha}}\sum_{k=-\infty}^\infty\left|\F(f)\left(\frac{k}{n^{1-\alpha}}\right)\right|^2 & = \int_\RR|\F(f)(\xi)|^2\d\xi \\
 & \leq \int_\RR|\F(f)(\xi)|^2(1+|\xi|)^{1+ \epsilon}\d\xi < \infty
\end{align}
since both sides are equal to the $ \L^2 $-norm of $ f $. For any $ M>0 $
\begin{align}\label{eq:section_proof_of_CUE:vanishing_L2_tail}
 & \limsup_n \frac{1}{n^{1-\alpha}}\sum_{|k|>N}\left|\F(f)\left(\frac{k}{n^{1-\alpha}}\right)\right|^2 \\
 = & \limsup_n\left(\frac{1}{n^{1-\alpha}}\sum_{k=-\infty}^\infty\left|\F(f)\left(\frac{k}{n^{1-\alpha}}\right)\right|^2 -  \frac{1}{n^{1-\alpha}}\sum_{k=-\N}^N\left|\F(f)\left(\frac{k}{n^{1-\alpha}}\right)\right|^2\right) \\ 
  \leq & \int_\RR|\F(f)(\xi)|^2\d\xi - \liminf_n\frac{1}{n^{1-\alpha}}\sum_{k=-n^{1-\alpha}M}^{n^{1-\alpha}M}\left|\F(f)\left(\frac{k}{n^{1-\alpha}}\right)\right|^2 \\
  = & \int_{|\xi|>M}|\F(f)(\xi)|^2\d\xi
\end{align}
 where the last equality is true since the second term is a Riemann sum and the Fourier transform of $ f $ is continuous. Hence the tail tends to zero as $ n \to \infty $. Since
 \begin{equation}\label{eq:section_proof_of_CUE:sobolev_norm_in_two_ways}
  \frac{1}{4\pi^2}\int_\RR\int_\RR\left(\frac{f_\alpha(x)-f_\alpha(y)}{x-y}\right)^2\d x\d y = \int_\RR|\xi||\F(f_\alpha)(\xi)|^2\d\xi,
 \end{equation}
(see \cite[Theorem 7.12]{Lieb01}) which is scale invariant, and since $ f $ has support in $ [-\pi,\pi] $ and $ x/\sin(x/2) $ is bounded on compact subsets of $ (-2\pi,2\pi) $, it follows from \eqref{eq:section_preliminaries:variance_of_CUE} that there is a constant $ c > 0 $ such that 
 \begin{multline}\label{eq:section_proof_of_CUE:H_bound}
  \frac{1}{n^{1-\alpha}}\sum_{k=-\infty}^\infty\frac{\min(n,|k|)}{n^{1-\alpha}}\left|\F(f)\left(\frac{k}{n^{1-\alpha}}\right)\right|^2 = \V\left[X_n^{(\alpha,1)}(f)\right] \\
  \leq \frac{c}{2\pi^2}\int_\RR\int_\RR\left(\frac{f_\alpha(x)-f_\alpha(y)}{x-y}\right)^2\d x\d y = 2c\int_\RR|\xi||\F(f)(\xi)|^2\d\xi < \infty.
 \end{multline}
 By an approximation argument it follows that
 \begin{equation}
  \frac{1}{n^{1-\alpha}}\sum_{k=-\infty}^\infty\frac{\min(n,|k|)}{n^{1-\alpha}}\left|\F(f)\left(\frac{k}{n^{1-\alpha}}\right)\right|^2 \rightarrow \int_\RR|\xi||\F(f)(\xi)|^2\d\xi
 \end{equation}
 as $ n \rightarrow \infty $. By an analogous argument as for the tail of the first term, we get that the tail in the second term tends to zero as $ n \to \infty $.
\end{proof}

Note that  the proof of the latter result works also under the  weaker condition $ \eps = 0 $ in \eqref{eq:section_proof_of_CUE:epsilon_bound}. But later we  will use the stronger condition $\eps>0$. Note that this condition implies that $ \F(f) \in \L^1(\RR) $ and that $ f $ is H\"older continuous, which in general does not hold  in case we only have $ \eps = 0$.

\begin{lemma}\label{lem:section_proof_of_CUE:holder}
 Let $ f:\RR \rightarrow \RR $ fulfill the condition  \eqref{eq:section_proof_of_CUE:epsilon_bound} and let $ \delta < \frac{\eps}{2} $. Then $ f $ is H\"older continuous with exponent $ \delta $.  
\end{lemma}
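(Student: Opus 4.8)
The plan is to derive Hölder continuity of $f$ from the hypothesis $\int_\RR |\F(f)(\omega)|^2(1+|\omega|)^{1+\eps}\d\omega<\infty$ via the Fourier inversion formula together with a Cauchy--Schwarz estimate. First I would note that the hypothesis puts $\F(f)$ into a weighted $\L^2$ space with weight $(1+|\omega|)^{1+\eps}$; since $1+\eps>1$, the weight $(1+|\omega|)^{-(1+\eps)}$ is integrable on $\RR$, so by Cauchy--Schwarz $\F(f)\in\L^1(\RR)$. Consequently $f$ agrees a.e.\ with the continuous function given by the inverse Fourier transform
$$
f(x)=\int_\RR \F(f)(\omega)\,\e^{\i x\omega}\,\d\omega,
$$
and I will work with this continuous representative.

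Next I would estimate the increment $f(x)-f(y)$ directly. Using the inversion formula,
$$
|f(x)-f(y)|\le \int_\RR |\F(f)(\omega)|\,\bigl|\e^{\i x\omega}-\e^{\i y\omega}\bigr|\,\d\omega
\le \int_\RR |\F(f)(\omega)|\,\min\bigl(2,\,|\omega|\,|x-y|\bigr)\,\d\omega,
$$
using the elementary bound $|\e^{\i a}-\e^{\i b}|\le\min(2,|a-b|)$. Now I would split the integral at $|\omega|=1/|x-y|$: on the region $|\omega|\le 1/|x-y|$ I bound the factor by $|\omega|\,|x-y|$, and on $|\omega|> 1/|x-y|$ I bound it by $2$. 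In each piece I apply Cauchy--Schwarz against the weight, writing $|\F(f)(\omega)|\cdot g(\omega) = \bigl(|\F(f)(\omega)|(1+|\omega|)^{(1+\eps)/2}\bigr)\cdot\bigl(g(\omega)(1+|\omega|)^{-(1+\eps)/2}\bigr)$; the first factor is controlled uniformly by $\bigl(\int|\F(f)|^2(1+|\omega|)^{1+\eps}\bigr)^{1/2}<\infty$, and the second factor is an explicit integral over the relevant region. A short computation of $\int_{|\omega|\le R}|\omega|^2(1+|\omega|)^{-(1+\eps)}\d\omega$ and $\int_{|\omega|>R}(1+|\omega|)^{-(1+\eps)}\d\omega$ with $R=1/|x-y|$ (for, say, $|x-y|\le 1$) shows both pieces are $O\bigl(|x-y|^{\min(\eps/2,\,(1-\eps)/2 + 1)}\bigr)$; keeping track of the exponents, one gets $|f(x)-f(y)|\le C\,|x-y|^{\eps/2}$ for $|x-y|\le 1$, and the bound for $|x-y|\ge 1$ is trivial from boundedness of $f$. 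Hence $f$ is Hölder with any exponent $\delta<\eps/2$ (the strictness absorbs the borderline behavior of the integrals and also handles the case $\eps\ge 1$, where one simply bounds $\min(2,|\omega||x-y|)\le (2)^{1-\delta}(|\omega||x-y|)^{\delta}$ and integrates directly).

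In fact, the cleanest route — and the one I would actually write — bypasses the split entirely: use $\min(2,|\omega|\,|x-y|)\le 2^{1-\delta}\,|\omega|^{\delta}\,|x-y|^{\delta}$ for any $\delta\in[0,1]$, so that
$$
|f(x)-f(y)|\le 2^{1-\delta}|x-y|^{\delta}\int_\RR |\F(f)(\omega)|\,|\omega|^{\delta}\,\d\omega,
$$
and then apply Cauchy--Schwarz to the remaining integral: it is bounded by $\bigl(\int|\F(f)(\omega)|^2(1+|\omega|)^{1+\eps}\d\omega\bigr)^{1/2}\bigl(\int |\omega|^{2\delta}(1+|\omega|)^{-(1+\eps)}\d\omega\bigr)^{1/2}$, and the last integral converges precisely when $2\delta - (1+\eps) < -1$, i.e.\ $\delta<\eps/2$. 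This gives the Hölder estimate with constant depending only on $\delta,\eps$ and the weighted norm of $\F(f)$.

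The main obstacle, such as it is, is purely bookkeeping: one must make sure the exponent arithmetic in the convergence condition $2\delta-(1+\eps)<-1$ matches the claimed threshold $\delta<\eps/2$, and one should state up front which continuous representative of $f$ is meant (the hypothesis only controls $f$ as an $\L^2$ function a priori, so the Hölder statement is about its canonical continuous version). There is no analytic difficulty beyond these points.
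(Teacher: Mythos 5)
Your final ``cleanest route'' argument is precisely the paper's proof: write $f$ via Fourier inversion, bound $\left|\e^{\i\theta_1\xi}-\e^{\i\theta_2\xi}\right|\le 2^{1-\delta}\,|\xi|^\delta|\theta_1-\theta_2|^\delta$ by interpolation, and apply Cauchy--Schwarz against the weight $(1+|\xi|)^{1+\eps}$, with convergence of $\int |\xi|^{2\delta}(1+|\xi|)^{-(1+\eps)}\d\xi$ holding exactly when $\delta<\eps/2$. The preliminary remark that $\F(f)\in\L^1$ (so $f$ has a continuous representative) and the dyadic-split alternative are correct extras but not needed beyond the one-line argument.
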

\begin{proof}
 Let $ \theta_1,\theta_2 \in \RR $ then
 \begin{multline}
 |f(\theta_1)-f(\theta_2)| \leq \frac{1}{2\pi}\int_\RR|\F(f)(\xi)|\left|\e^{\i\theta_1\xi}-\e^{\i\theta_2\xi}\right|\d\xi \\
 \leq \frac{1}{\pi}\int_\RR\left|\F(f)(\xi)(1+|\xi|)^{\frac{1+\eps}{2}}\right|\frac{|\xi(\theta_1-\theta_2)|^{\delta}}{(1+|\xi|)^{\frac{1+\eps}{2}}}\d\xi \\
\leq |\theta_1-\theta_2|^\delta\frac{1}{\pi}\left(\int_\RR|\F(f)(\xi)|^2(1+|\xi|)^{1+\eps}\d\xi\int\right)^{1/2}\left(\int_\RR\frac{|\xi|^{2\delta}}{(1+|\xi|)^{1+\eps}}\d\xi\right)^{1/2},
\end{multline}
where we in the second inequality used that
\begin{equation}
 \left|\e^{\i\theta_1\xi}-\e^{\i\theta_2\xi}\right| = \left|\e^{\i\theta_1\xi}-\e^{\i\theta_2\xi}\right|^{1-\delta}\left|\e^{\i\theta_1\xi}-\e^{\i\theta_2\xi}\right|^\delta \leq 2|\xi(\theta_1-\theta_2)|^{\delta}.
\end{equation}

\end{proof}

\subsection{Riemann-Hilbert problem}\label{subsection:section_proof_of_CUE:riemann-hilbert_problem}
With the approximation \eqref{eq:section_proof_of_CUE:def_of_approximated_function} in hand, we continue to the steepest descent analysis of the following  Riemann-Hilbert problem.

\begin{RHP}\label{RHP:section_proof_of_CUE:first_problem}
Let $ \Gamma_m $ be the unit circle with positive orientation. Find a function $ m:\Gamma_m \rightarrow \CC^{2\times 2} $ such that
\begin{itemize}
 \item $ m $ is analytic in $ \CC \backslash \Gamma_m $
 \item $ m_+(z) = m_-(z)
 \begin{pmatrix}
\varphi_{n,t}(z) & -(\varphi_{n,t}(z) - 1)z^n \\
(\varphi_{n,t}(z) - 1)z^{-n} & 2 - \varphi_{n,t}(z)
\end{pmatrix}, $ $ z \in \Gamma_m $ 
 \item $ m(z) = I + \Ordo(z^{-1}) $ as $ |z| \rightarrow \infty $
\end{itemize}
where
\begin{equation}\label{eq:section_proof_of_CUE:def_of_varphi}
\varphi_{n,t}(z) = 1 - t\gamma_n\left(1-\e^{\lambda n^{-s}f_{\alpha,N}(z)}\right).
\end{equation}
\end{RHP}

Recall from Lemma \ref{lem:section_preliminaries:characteristic_function_as_integral} that we consider $ t \in [0,1] $.

The first step in the steepest descent analysis is the opening of the lens.  Usually, e.g. \cite{Deift99}, the location of the lips of the lens does not depend on $n$, but because of our special form of the $\varphi_{n,t}$ (coming from the mesoscopic scales) we need  to shrink the lenses as $ n \to \infty$ in a very specific way. This also means that we need to carefully check that all estimates in the error analysis still hold. 

\begin{defn}[$m \rightarrow S $]\label{def:section_proof_of_CUE:S_function}
Let $ \rho_N = \rho^{N^{-1}} $ for some $ \rho \in (0,1) $ and let $ \Gamma_S = \{z \in \CC: |z| \in\{ \rho_N,1,\rho_N^{-1}\} \} $. Define $ S: \CC \backslash \Gamma_S \rightarrow \CC^{2 \times 2} $ as
\begin{align}
S(z) =& m(z), & |z| & < \rho_N \\
S(z) =& m(z)
\begin{pmatrix}
1 & -(1 - \varphi_{n,t}(z)^{-1})z^n \\
0 & 1
\end{pmatrix}^{-1},
& \rho_N < |z| &< 1 \\
S(z) = & m(z)
\begin{pmatrix}
1 & 0 \\
(1 - \varphi_{n,t}(z)^{-1})z^{-n} & 1
\end{pmatrix},
& 1 < |z| & < \rho_N^{-1} \\
S(z) = & m(z), & \rho_{N}^{-1} < |z|&.
\end{align}
\end{defn}
\begin{figure}[t]
\begin{center}
\begin{tikzpicture}[scale=4]
\begin{scope}[decoration={markings,mark= at position 0.5 with {\arrow{stealth}}}]
\draw[very thick,postaction=decorate] (0,0) circle(14pt);
\draw (-0.5,0) node[right] {$+$};
\draw (-0.5,0) node[left] {$-$};
\draw[very thick,postaction=decorate] (0,0) circle(7pt);
\draw (-0.25,0) node[right] {$+$};
\draw (-0.25,0) node[left] {$-$};
\draw[very thick,postaction=decorate] (0,0) circle(21pt);
\draw (-0.75,0) node[right] {$+$};
\draw (-0.75,0) node[left] {$-$};

\draw (-0.75,0.75) node[above] {$
\begin{psmallmatrix}
1 & -(1 - \varphi_{n,t}(z)^{-1})z^n \\
0 & 1
\end{psmallmatrix}
$};
\draw[->,dashed] (-0.75,0.75) --(-0.176777,0.176777);
\draw (0,0.75) node[above] {$
\begin{psmallmatrix}
\varphi_{n,t}(z) & 0 \\
0 & \varphi_{n,t}(z)^{-1}
\end{psmallmatrix}
$};
\draw[->,dashed] (0,0.75) --(0,0.5);
\draw (0.75,0.75) node[above] {$
\begin{psmallmatrix}
1 & 0 \\
(1 - \varphi_{n,t}(z)^{-1})z^{-n} & 1
\end{psmallmatrix}
$};
\draw[->,dashed] (0.75,0.75) --(0.530330,0.530330);
\draw[->,dashed] (0.25,0) --(0.4,0);
\draw[->,dashed] (0.74,0) --(0.6,0);
\end{scope}
\end{tikzpicture}
\end{center}
\caption{The contour and jump matrix for the Riemann-Hilbert problem stated in Lemma \ref{lem:section_proof_of_CUE:RHP_s_to_r}.}
\end{figure}

\begin{lemma}\label{lem:section_proof_of_CUE:RHP_s_to_r}
Let $ S $ be defined as in Definition \ref{def:section_proof_of_CUE:S_function}, then $ S $ solves the Riemann-Hilbert problem
\begin{itemize}
 \item $ S $ is analytic in $ \CC \backslash \Gamma_S $
 \item $ S_+(z)=S_-(z)J_S(z) $ where
\begin{align}
J_S(z) =&
\begin{pmatrix}
1 & -(1 - \varphi_{n,t}(z)^{-1})z^n \\
0 & 1
\end{pmatrix},
& |z| = \rho_N  \\
J_S(z) = &
\begin{pmatrix}
\varphi_{n,t}(z) & 0 \\
0 & \varphi_{n,t}(z)^{-1}
\end{pmatrix}, 
& |z| = 1 \\
J_S(z) = &
\begin{pmatrix}
1 & 0 \\
(1 - \varphi_{n,t}(z)^{-1})z^{-n} & 1
\end{pmatrix},
&|z| = \rho_N^{-1}
\end{align}
\item $ S(z) = I + \Ordo(z^{-1}) $ as $ z $ tends to infinity.
\end{itemize}
\end{lemma}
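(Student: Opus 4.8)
The statement is the standard ``opening of the lens'' transformation, so the proof is a region-by-region verification. Before checking the three conditions I would record the two facts that make the construction legitimate. First, $m$ solves RH problem \ref{RHP:section_proof_of_CUE:first_problem}: it is analytic in $\CC\setminus\UC$, it satisfies $m_+=m_-J_m$ on $\UC$ with $J_m$ the jump matrix written there, and $m(z)=I+\Ordo(z^{-1})$ as $z\to\infty$. Second, since $f_{\alpha,N}$ is a Laurent polynomial it is analytic on $\CC\setminus\{0\}$, and on the closed annulus $\rho_N\le|z|\le\rho_N^{-1}$ one has $|f_{\alpha,N}(z)|\le\rho^{-1}\sum_{|k|\le N}|\hat f_\alpha(k)|=\Ordo(1)$ uniformly in $n$ (by \eqref{eq:section_proof_of_CUE:fourier_coefficient_in_terms_of_fourier_transform} this sum is a Riemann sum for $\rho^{-1}\|\F f\|_{\L^1}$, which is finite because of the $\eps>0$ in \eqref{eq:section_proof_of_CUE:epsilon_bound}); hence, for $\lambda$ in a sufficiently small disc around the origin, $\varphi_{n,t}$ is analytic and non-vanishing on that annulus. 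Consequently every triangular factor occurring in Definition \ref{def:section_proof_of_CUE:S_function}, together with its inverse, is analytic and invertible on the annulus where it is used, so $S$ is well defined and analytic in each of the four open regions cut out by $\Gamma_S$, i.e. in $\CC\setminus\Gamma_S$; and since $S=m$ for $|z|>\rho_N^{-1}$ we also get $S(z)=I+\Ordo(z^{-1})$ at infinity.

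It remains to compute $J_S$. I would fix the orientation convention (all three circles positively oriented, so the $+$-side of each is its interior) and proceed circle by circle. On $|z|=\rho_N$ the matrix $m$ is analytic across the circle, so $S_+=m$ (limit from $|z|<\rho_N$) while $S_-=m\begin{psmallmatrix}1 & -(1-\varphi_{n,t}^{-1})z^n\\0 & 1\end{psmallmatrix}^{-1}$ (limit from the annulus); hence $J_S=S_-^{-1}S_+=\begin{psmallmatrix}1 & -(1-\varphi_{n,t}^{-1})z^n\\0 & 1\end{psmallmatrix}$. The circle $|z|=\rho_N^{-1}$ is treated the same way and gives $J_S=\begin{psmallmatrix}1 & 0\\(1-\varphi_{n,t}^{-1})z^{-n} & 1\end{psmallmatrix}$.

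The only computation of substance is on $|z|=1$, where $S_+=m_+R^{-1}$ and $S_-=m_-L$ with $R=\begin{psmallmatrix}1 & -(1-\varphi_{n,t}^{-1})z^n\\0 & 1\end{psmallmatrix}$ and $L=\begin{psmallmatrix}1 & 0\\(1-\varphi_{n,t}^{-1})z^{-n} & 1\end{psmallmatrix}$, so that $J_S=S_-^{-1}S_+=L^{-1}(m_-^{-1}m_+)R^{-1}=L^{-1}J_mR^{-1}$. Writing $\varphi=\varphi_{n,t}(z)$ and $u=1-\varphi^{-1}=(\varphi-1)/\varphi$, direct multiplication gives $L^{-1}J_m=\begin{psmallmatrix}\varphi & -(\varphi-1)z^n\\0 & \varphi^{-1}\end{psmallmatrix}$ --- the $(2,1)$-entry cancels because $u\varphi=\varphi-1$, and the $(2,2)$-entry equals $u(\varphi-1)+2-\varphi=1/\varphi$ because $\det J_m=1$ --- and then $(L^{-1}J_m)R^{-1}$ has vanishing $(1,2)$-entry, again by $\varphi u=\varphi-1$, so $J_S=\begin{psmallmatrix}\varphi & 0\\0 & \varphi^{-1}\end{psmallmatrix}$, as claimed. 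Together with the first paragraph this verifies all three bullet points.

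I do not expect a genuine obstacle here: the lemma is a routine conjugation. The one point that needs care, and that distinguishes the present setting from the textbook case in e.g. \cite{Deift99}, is that the lens boundaries $|z|=\rho_N^{\pm1}$ collapse onto $\UC$ as $n\to\infty$ (since $\rho_N=\rho^{1/N}$ and $N\to\infty$ by \eqref{eq:section_proof_of_CUE:def_N}), so one must confirm that $\varphi_{n,t}$ stays analytic and non-vanishing on this shrinking annulus; this is exactly why $N$ is constrained as in \eqref{eq:section_proof_of_CUE:def_N} and $\lambda$ is restricted to a small disc, and it is the estimate that will have to be tracked carefully in the subsequent error analysis.
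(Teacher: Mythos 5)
Your proof is correct and follows essentially the same approach as the paper: establish the uniform bound on $f_{\alpha,N}$ over the shrinking annulus (the same Riemann-sum/$\L^1$ argument), use it to ensure $\varphi_{n,t}$ is analytic and non-vanishing there for $\lambda$ small, and then verify the jumps via the $LDR$-factorization of $J_m$. The only cosmetic differences are that you obtain the bound on the whole closed annulus directly from $|z|^k\le\rho^{-1}$ rather than invoking the maximum modulus principle, and that you verify the factorization on $|z|=1$ by computing $L^{-1}J_mR^{-1}$ explicitly rather than displaying the factored form of $J_m$.
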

\begin{proof}
That $ S $ is well defined and analytic in $ \CC \backslash \Gamma_S $ follows if $ \varphi_{n,t} $ is non-zero for $ \rho_N \leq |z| \leq \rho_N^{-1} $. By the choice of $ \rho_N $ there is a uniform bound of $ f_{\alpha,N} $, for $ |z| = \rho_N $ or $ |z| = \rho_N^{-1} $, namely 
\begin{align}\label{eq:section_proof_of_CUE:bound_on_gn}
 |f_{\alpha,N}(z)| & \leq \frac{1}{n^{1-\alpha}} \sum_{k=-N}^N \left|\F(f) \left(\frac{k}{n^{1-\alpha}}\right)\right|\rho_N^{-N} \\
 & \rightarrow \rho^{-1} \int_{-\infty}^{\infty}|\F(f)(x)|\d x \\
 & \leq c\int_\RR|\F(f)(\xi)|^2(1+|\xi|)^{1+\epsilon} \d\xi < \infty
\end{align}
as $ n \rightarrow \infty $ for some constant $ c $. The limit above follows since the derivative of $ \F(f) $ is bounded and by the specific choice of $ N $ \eqref{eq:section_proof_of_CUE:def_N}. By the maximum modulo principle $f_{\alpha,N}(z)$ is bounded by the same constant for $ \rho_N \leq |z| \leq \rho_N^{-1} $. Note that this bound fails  if we did not shrink the lips. Since we have a uniform bound for $f_{\alpha,N}$, we let $ \lambda $ be so small that \eqref{eq:section_proof_of_CUE:def_of_varphi} is close to one for all $ z $ with $ \rho_N \leq |z| \leq \rho_N^{-1} $ which makes sure that all factors are analytic.

The jumps for $ |z| = \rho_N $ and $ |z| = \rho_N^{-1} $ follows by the definition of $ S $. The jump for $ |z| = 1 $ follows by the factorization
\begin{multline}
\begin{pmatrix}
\varphi_{n,t}(z) & -(\varphi_{n,t}(z) - 1)z^n \\
(\varphi_{n,t}(z) - 1)z^{-n} & 2 - \varphi_{n,t}(z)
\end{pmatrix}
= \\
=
\begin{pmatrix}
1 & 0 \\
(1 - \varphi_{n,t}(z)^{-1})z^{-n} & 1
\end{pmatrix}
\begin{pmatrix}
\varphi_{n,t}(z) & 0 \\
0 & \varphi_{n,t}(z)^{-1}
\end{pmatrix}
\begin{pmatrix}
1 & -(1 - \varphi_{n,t}(z)^{-1})z^n \\
0 & 1
\end{pmatrix}.
\end{multline}
This finishes the proof. 
\end{proof}

The idea with the first transformation is that the jumps $J_S$ on the inner and outer circle are close to the identity. If we ignore these jumps and only consider the  jump on the unit circle  we obtain a  Riemann-Hilbert problem that we can solve explicitly for finite $n$. The solution is called the global parametrix.
\begin{defn}[Global parametrix]\label{def:section_proof_of_CUE:P_function}
 Let $ \Gamma_P = \UC $ and define $ P:\CC \backslash \Gamma_P \rightarrow \CC^{2\times 2} $ as
\begin{equation}
 P(z) =
\begin{pmatrix}
e^{C(\log \varphi_{n,t})(z)} & 0 \\
 0 & e^{-C(\log \varphi_{n,t})(z)} 
\end{pmatrix}
\end{equation}
where $ C $ is the Cauchy operator on the unit circle defined for $ h\in \L^2(\UC) $ and given by
\begin{equation}\label{eq:section_proof_of_CUE:cauchy_operator_circle_plus}
 Ch(z) = \sum_{k=0}^{\infty}\hat{h}(k)z^k
\end{equation}
if $ |z| < 1 $ and
\begin{equation}\label{eq:section_proof_of_CUE:cauchy_operator_circle_minus}
 Ch(z) = -\sum_{k=-\infty}^{-1}\hat{h}(k)z^k
\end{equation}
if $ |z| > 1 $, and $ \log(1+z) $ is defined with  the principal branch. 
\end{defn}
\begin{rem}
In the proof of Lemma \ref{lem:section_proof_of_CUE:RHP_s_to_r} we saw that $ f_{\alpha,N} $ is uniformly bounded on $ \UC $, so by choosing $ \lambda $ small, $ \log \varphi_{n,t} =\log (1+ (\varphi_{n,t}-1)) $ is well-defined.
\end{rem}
The global parametrix solves the following Riemann-Hilbert problem. 
\begin{lemma}\label{lem:section_proof_of_CUE:RHP_global_parametrix}
Let $ P $ be defined as in Definition \ref{def:section_proof_of_CUE:P_function}, then $ P $ solves the Riemann-Hilbert problem
\begin{itemize}
\item $ P $ is analytic in $ \CC\backslash \Gamma_P $
\item $ P_+(z) = P_-(z)J_P(z) $ where 
\begin{equation}
J_P(z) = 
\begin{pmatrix}
\varphi_{n,t}(z) & 0 \\
0 & \varphi_{n,t}(z)^{-1}
\end{pmatrix},\quad z \in \Gamma_P
\end{equation}
\item $ P(z) = I + \Ordo(z^{-1}) $ as $ z $ tends to infinity.
\end{itemize}
\end{lemma}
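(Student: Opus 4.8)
The plan is to read off all three conditions directly from the mapping properties of the Cauchy operator $C$ on $\UC$, exactly as one constructs a Szeg\H{o}-type function. Write $g = \log\varphi_{n,t}$. The one fact needed is the Sokhotski--Plemelj jump relation: for $g \in \L^2(\UC)$ the function $Cg$ given by \eqref{eq:section_proof_of_CUE:cauchy_operator_circle_plus}--\eqref{eq:section_proof_of_CUE:cauchy_operator_circle_minus} is analytic in $\CC\setminus\UC$, vanishes at $\infty$, and on $\UC$ (with the positive orientation, so that the $+$ side is the interior) satisfies $(Cg)_+(z) - (Cg)_-(z) = g(z)$, because the interior boundary value contributes $\sum_{k\ge 0}\hat g(k)z^k$ and the exterior one $-\sum_{k\le -1}\hat g(k)z^k$, whose difference is the full Fourier series $\sum_{k\in\ZZ}\hat g(k)z^k = g(z)$.

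Before invoking this I would check that $g = \log\varphi_{n,t}$ is a legitimate input. By the uniform bound \eqref{eq:section_proof_of_CUE:bound_on_gn} obtained in the proof of Lemma \ref{lem:section_proof_of_CUE:RHP_s_to_r}, $f_{\alpha,N}$ is bounded on $\UC$ by a constant independent of $n$, so for $\lambda$ small the quantity $\varphi_{n,t}$ in \eqref{eq:section_proof_of_CUE:def_of_varphi} stays in a small disc around $1$; in particular it is non-vanishing and, as recorded in the Remark after Definition \ref{def:section_proof_of_CUE:P_function}, $g = \log(1+(\varphi_{n,t}-1))$ is single-valued and analytic in a neighbourhood of $\UC$ via the principal branch. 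Its restriction to $\UC$ therefore lies in $\L^2(\UC)$ and $Cg$ is defined.

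Now I verify the three items. \emph{Analyticity:} since $g\in\L^2(\UC)$ the series \eqref{eq:section_proof_of_CUE:cauchy_operator_circle_plus} has radius of convergence at least one and hence is analytic on $|z|<1$, while \eqref{eq:section_proof_of_CUE:cauchy_operator_circle_minus} is analytic on $|z|>1$; composing with $\exp$ keeps $P$ analytic in $\CC\setminus\UC$, and since $P$ is diagonal the reciprocal entry causes no trouble. \emph{Normalization:} by \eqref{eq:section_proof_of_CUE:cauchy_operator_circle_minus}, $Cg(z) = -\hat g(-1)z^{-1} + \Ordo(z^{-2})$ as $z\to\infty$, hence $\e^{\pm Cg(z)} = 1 + \Ordo(z^{-1})$ and $P(z) = I + \Ordo(z^{-1})$. \emph{Jump:} the jump relation gives $(Cg)_+ - (Cg)_- = g$ on $\UC$, so
\[
P_-(z)^{-1}P_+(z) = \operatorname{diag}\!\left(\e^{(Cg)_+ - (Cg)_-},\ \e^{-((Cg)_+ - (Cg)_-)}\right) = \operatorname{diag}\!\left(\varphi_{n,t}(z),\ \varphi_{n,t}(z)^{-1}\right) = J_P(z),
\]
which, as diagonal matrices commute, is precisely $P_+(z) = P_-(z)J_P(z)$.

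There is no real obstacle here; the only point demanding care is the one already flagged — that $\log\varphi_{n,t}$ is globally single-valued and analytic near $\UC$, i.e.\ that $\varphi_{n,t}$ does not wind around the origin. This is exactly where the shrinking of the lips ($\rho_N = \rho^{N^{-1}}$) enters, since it is what makes the uniform bound \eqref{eq:section_proof_of_CUE:bound_on_gn} on $f_{\alpha,N}$ — and with it the smallness of $\varphi_{n,t}-1$ — available; without it that bound, and hence the definition of $P$ itself, would break down.
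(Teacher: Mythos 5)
Your proof is correct and follows the same route the paper has in mind: the paper's one-line proof ("direct by our definition of the Cauchy operator on the circle") is exactly the additive jump $(Cg)_+ - (Cg)_- = g$ for $g=\log\varphi_{n,t}$, which you spell out via the Fourier-series form of $C$, together with the observations that $Cg$ vanishes at infinity and is analytic off $\UC$, and then exponentiate. Your preliminary check that $\log\varphi_{n,t}$ is single-valued near $\UC$ (via the bound \eqref{eq:section_proof_of_CUE:bound_on_gn} and small $\lambda$) is also exactly what the Remark following Definition~\ref{def:section_proof_of_CUE:P_function} records, so nothing is missing and nothing is different in substance.
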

\begin{proof}
 This is direct by our definition of the Cauchy operator on the circle.
\end{proof}

We now show that $P$ is a good approximation to $S$. 
\begin{defn}[$ S\rightarrow R $]\label{def:section_proof_of_CUE:R_function}
Let $ \Gamma_R = \Gamma_S $ and define $ R:\CC \backslash\Gamma_R \rightarrow \CC^{2\times 2} $ as
\begin{equation}
R(z) = S(z)P(z)^{-1}.
\end{equation}
\end{defn}
\begin{lemma}\label{lem:section_proof_of_CUE:solution_R_function}
Let $ R $ be defined as in Definition \ref{def:section_proof_of_CUE:R_function}, then $ R $ solves the Riemann-Hilbert problem
\begin{itemize}
 \item $ R $ is analytic in $ \CC\backslash \Gamma_R $
 \item $ R_+(z) = R_-(z)J_R(z) $ for $ z \in \Gamma_R $ where
 \begin{equation}
  J_R(z) = P_-(z)J_S(z)P_+(z)^{-1}
 \end{equation}
 \item $ R(z) = I + \Ordo(z^{-1}) $ as $ z $ tends to infinity.
\end{itemize}
Moreover, the solution exists for big enough $ n $ and
\begin{equation}
|(R(z)-I)|  \leq \frac{c\e^{\frac{n}{N}\log\rho}}{\min\{\rho_N - |z|,\rho_N^{-1} - |z|\}}
\end{equation}	
as $ n \rightarrow \infty $ for some positive constant $ c $.
\end{lemma}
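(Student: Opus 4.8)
The standard small-norm theory for Riemann--Hilbert problems reduces everything to estimating $\|J_R - I\|$ on $\Gamma_R$ in the appropriate $L^2\cap L^\infty$ norms. So the plan is: first compute $J_R$ explicitly on each of the three circles of $\Gamma_R$, then show this jump is exponentially small (uniformly, with the stated rate $\e^{\frac{n}{N}\log\rho}$), and finally invoke the standard theorem that a RHP with jump close to the identity has a unique solution $R = I + \Ordo(\|J_R-I\|)$, with the Cauchy-operator bound producing the $\min\{\rho_N-|z|,\rho_N^{-1}-|z|\}$ denominator in the pointwise estimate. Concretely, on the unit circle $|z|=1$ we have $J_S = J_P$, so $J_R = P_-(P_+^{-1}) \cdot \text{(stuff)}$ collapses: since $P_\pm$ are diagonal and $P_+ = P_- J_P = P_- J_S$ there, $J_R = I$ on $|z|=1$. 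Thus $R$ is in fact analytic across the unit circle and $\Gamma_R$ effectively reduces to the two circles $|z| = \rho_N^{\pm1}$.

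On $|z| = \rho_N$ the jump is $J_R(z) = P_-(z)\begin{pmatrix}1 & -(1-\varphi_{n,t}(z)^{-1})z^n \\ 0 & 1\end{pmatrix}P_+(z)^{-1}$, and since $P$ is diagonal this is $I$ plus an off-diagonal entry of the form $-e^{2C(\log\varphi_{n,t})(z)}(1-\varphi_{n,t}(z)^{-1})z^n$; the key gain is $|z^n| = \rho_N^n = \rho^{n/N}$, which is exponentially small precisely because $N \ll n$ in the regime \eqref{eq:section_proof_of_CUE:def_N}. The remaining factors are controlled: $f_{\alpha,N}$ is uniformly bounded on $\rho_N \le |z| \le \rho_N^{-1}$ by the estimate \eqref{eq:section_proof_of_CUE:bound_on_gn} established in the proof of Lemma \ref{lem:section_proof_of_CUE:RHP_s_to_r}, hence $\varphi_{n,t}$ is bounded away from $0$ and $\infty$ (for $\lambda$ small), $\log\varphi_{n,t}$ is bounded, and the Cauchy operator $C$ on the circle is bounded on the relevant spaces, so $e^{\pm 2C(\log\varphi_{n,t})}$ is bounded. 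Therefore $\|J_R - I\|_{L^\infty(|z|=\rho_N)} \le c\,\rho^{n/N}$; the outer circle $|z|=\rho_N^{-1}$ is handled identically using $|z^{-n}| = \rho^{n/N}$. An $L^2$ bound of the same order follows since the contours have finite length.

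With $\|J_R - I\| \le c\,\e^{\frac{n}{N}\log\rho} \to 0$, the standard argument (e.g. as in \cite{Deift99,Deift00}) applies: the singular integral operator $C_{J_R}h = C_-(h(J_R - I))$ has norm $\le c\|J_R-I\| < 1$ for large $n$, so $I - C_{J_R}$ is invertible, $R$ exists and is unique, and
$$R(z) = I + \frac{1}{2\pi i}\int_{\Gamma_R}\frac{(I + \mu(w))(J_R(w)-I)}{w - z}\,\d w,$$
where $\mu = (I - C_{J_R})^{-1}C_{J_R}I$ satisfies $\|\mu\|_{L^2} \le c\|J_R-I\|$. Estimating this Cauchy integral pointwise: the numerator is $\Ordo(\e^{\frac{n}{N}\log\rho})$ in $L^1(\Gamma_R)$, and for $z$ away from $\Gamma_R$ the factor $|w-z|^{-1}$ is bounded by $(\operatorname{dist}(z,\Gamma_R))^{-1} = (\min\{\rho_N - |z|, \rho_N^{-1} - |z|\})^{-1}$ on the relevant region, yielding exactly the claimed bound $|R(z) - I| \le c\,\e^{\frac{n}{N}\log\rho}/\min\{\rho_N - |z|, \rho_N^{-1} - |z|\}$.

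The main obstacle is not any single step but the uniformity: the estimate must hold uniformly in $t \in [0,1]$, uniformly in $\lambda$ in a fixed small disk, and — most delicately — the constant $c$ and the smallness must survive the fact that the contour $\Gamma_R$ itself is moving with $n$ (since $\rho_N = \rho^{1/N} \to 1$). One must check that the bound \eqref{eq:section_proof_of_CUE:bound_on_gn} on $f_{\alpha,N}$ and the boundedness of the Cauchy operator are genuinely $n$-independent despite the shrinking lens, and that the distance $\operatorname{dist}(z,\Gamma_R)$ appearing in the denominator is the honest quantity that arises — this is exactly the "careful estimates in the error terms" flagged in the introduction, and it is why the exponent $\frac{n}{N}$ rather than a fixed constant appears, forcing the constraint $N \ll n$ from \eqref{eq:section_proof_of_CUE:def_N}.
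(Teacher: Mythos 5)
Your plan has the same overall structure as the paper's proof — compute $J_R$ on each piece, note it collapses to $I$ on the unit circle, bound $\|J_R-I\|_{L^\infty}$ by $\rho^{n/N}$, and then invoke small-norm theory and estimate the Cauchy integral. But there is a genuine gap at exactly the step you wave at with ``the Cauchy operator $C$ on the circle is bounded on the relevant spaces, so $e^{\pm 2C(\log\varphi_{n,t})}$ is bounded.'' The Cauchy projection $C$ is bounded on $L^2(\UC)$, but it is \emph{not} bounded $L^\infty\to L^\infty$, and the $L^2\to L^\infty(\{|z|=\rho_N\})$ bound you would need degrades like $(1-\rho_N)^{-1/2}$, which blows up because $\rho_N=\rho^{1/N}\to 1$. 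So the assertion that $e^{\pm 2C(\log\varphi_{n,t})}$ is uniformly bounded on the lips does not follow from general boundedness of $C$; you flag the issue in your last paragraph but never actually close it.

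The paper resolves this with a specific mechanism you omit: it passes to the Wiener norm $\|h\|_W=\sum_k|\hat h(k)|$, which is a Banach algebra norm and controls $\sup_{|z|\le 1}|\sum_{k\ge 0}\hat h(k)z^k|$ \emph{uniformly in $|z|$}, hence gives an $n$-independent bound on $|C(\log\varphi_{n,t})(z)|$ for all $\rho_N\le|z|\le\rho_N^{-1}$. The finiteness of $\|f_{\alpha,N}\|_W$ (uniformly in $n$) is where the hypothesis $\int|\F f(\omega)|^2(1+|\omega|)^{1+\eps}\,\d\omega<\infty$ enters, via Cauchy--Schwarz in \eqref{eq:section_proof_of_CUE:wiener_norm}; submultiplicativity of $\|\cdot\|_W$ then carries the bound to $\log\varphi_{n,t}$ in \eqref{eq:section_proof_of_CUE:bound_of_cauchy_operator_of_logarithm}. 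Without this (or an equivalent device), your estimate on $\|J_R-I\|_{L^\infty}$ does not have an $n$-independent constant, and the whole small-norm argument is not justified. Everything else in your outline — the vanishing of $J_R$ on $\UC$, the $\rho_N^{\pm n}=\rho^{n/N}$ gain, the Neumann-series existence, and the pointwise Cauchy-integral bound producing the $\min\{\rho_N-|z|,\,\rho_N^{-1}-|z|\}$ denominator — matches the paper and is fine.
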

\begin{proof}
That $ R $ solves the Riemann-Hilbert problem follows directly from the definition of $ R $.

To see that $ R $ is close to the identity requires more work. The structure of the proof is not different from the standard way, see e.g. \cite{Deift00} Theorem 7.171, but it requires some extra care since both the contour and the global parametrix depend on $ n $.

It is clear that $ J_R = I $ on the unit circle. If $ |z| = \rho_N $, then
\begin{equation}\label{eq:section_proof_of_CUE:error_term_on_small_circle}
J_R(z) = I-(1- \varphi_{n,t}(z)^{-1})z^ne^{2C(\log \varphi_{n,t})(z)} 
\begin{pmatrix}
0 & 1\\
0 & 0
\end{pmatrix}
\end{equation}
and similarly if $ |z| = \rho_N^{-1} $ then
\begin{equation}\label{eq:section_proof_of_CUE:error_term_on_big_circle}
J_R(z) = I + (1- \varphi_{n,t}(z)^{-1})z^{-n}e^{-2C(\log \varphi_{n,t})(z)} 
\begin{pmatrix}
0 & 0\\
1 & 0
\end{pmatrix}.
\end{equation}
We want to show that $ J_R $ is close to the identity, that is, that $ z^n $ and $ z^{-n} $ is the dominating factor on the inner, respectively outer circle of $ J_R $. That $ 1-\varphi_{n,t}^{-1} $ is uniformly bounded on $ \Gamma_R $ follows since we shrink the circles and hence have \eqref{eq:section_proof_of_CUE:bound_on_gn}. That we shrink the circles is crucial for the analysis to work. It does have the drawback that it complicates the bound for $ C\log\varphi_{n,t} $, but we will now show how to get around that. 

We recall that for $h(z)=\sum_{k=-\infty}^\infty \hat h(k)z^k$ the Wiener norm is defined by  $\|h(z)\|_W= \sum_{k=-\infty}^\infty| \hat h(k)|$. Then $ f_{\alpha,N} $  has finite Wiener norm 
\begin{align}\label{eq:section_proof_of_CUE:wiener_norm}
 \sum_{k=-\infty}^{\infty}|\hat{f}_{\alpha,N}(k)| & = \frac{1}{n^{1-\alpha}}\sum_{k=-N}^N\left|\F(f)\left(\frac{k}{n^{1-\alpha}}\right)\right| \\
 & \rightarrow \int_\RR|\F(f)(\xi)|\d\xi \\
 & \leq c'\int_\RR|\F(f)(\xi)|^2(1+|\xi|)^{1+\epsilon}\d\xi < \infty
\end{align}
as $ n \rightarrow \infty $ for some constant $ c' $ (the limit follows by the same argument used for \eqref{eq:section_proof_of_CUE:bound_on_gn}). In the last inequality we used Cauchy-Schwarz.  Note that the bound of the Wiener norm of $ f_{\alpha,N} $ is uniform in $n$. By submultiplicativity of the Wiener norm, i.e. $\|gh\|_W\leq \|g\|_W \|h\|_W$, we obtain, by \eqref{eq:section_proof_of_CUE:cauchy_operator_circle_plus} and \eqref{eq:section_proof_of_CUE:cauchy_operator_circle_minus}, 
\begin{align}\label{eq:section_proof_of_CUE:bound_of_cauchy_operator_of_logarithm}
 |C(\log \varphi_{n,t})(z)| & \leq \sum_{k=-\infty}^\infty|\widehat{\log\varphi_{n,t}}(k)| \\
 & \leq -\log\left(1-|t\gamma_n|\left(\e^{|\lambda| n^{-s}\sum_{k=-\infty}^\infty|\hat{f}_{\alpha,N}(k)|}-1\right)\right) \\
 & \leq -\log\left(1-\left(\e^{|\lambda|c'\int_\RR|\F(f)(\xi)|^2(1+|\xi|)^{1+\epsilon}\d\xi}-1\right)\right)
\end{align}
for $ |z| = \rho_N,\rho_N^{-1} $. From this discussion and \eqref{eq:section_proof_of_CUE:error_term_on_small_circle} and \eqref{eq:section_proof_of_CUE:error_term_on_big_circle}
  there exists a constant $ c'' $ such that
 \begin{align}\label{eq:section_proof_of_CUE:order_of_jump}
 \|J_R - I\|_{\L^{\infty}(\Gamma_R)} & \leq c''\rho_N^n \\
 & = c''\e^{\frac{n}{N}\log\rho}.
\end{align}
as $ n \rightarrow \infty $. General principles for Riemann-Hilbert problems (see \cite{Deift00} Theorem 7.103) now tell us that for big enough $ n $ we can solve the Riemann-Hilbert problem  by a Neumann series. 

First recall the Cauchy operator on $ \Gamma_R $, it is defined for $ h \in \L^2(\Gamma_R) $ and given by
\begin{equation}
 C^{\Gamma_R}h(z) = \frac{1}{2\pi i}\int_{\Gamma_R}\frac{h(w)}{w-z}\d w.
\end{equation}
The operator $ C^{\Gamma_R}_- $ takes a function $ h $ to the limiting function of $ C^{\Gamma_R}h $ as $ z $ tends to $ \Gamma_R $ from the minus side. Using the Cauchy operator on $ \Gamma_R $, the solution to the Riemann-Hilbert problem is given by
\begin{equation}
R = I + (C^{\Gamma_R}(\mu w))(z)
\end{equation}
where $ w = J_R - I $ and $ \mu $ is a solution to $ (I-C^{\Gamma_R}_- w)\mu = I $. 

Since $ J_R - I = 0 $ on the unit circle and $ \mu \in \L^2(\Gamma_R) $, we have, from \eqref{eq:section_proof_of_CUE:order_of_jump}, for any  $ z \in \CC $ with $ \rho_N < |z| < \rho_N^{-1} $, that
\begin{align}
|R(z)-I| & \leq \frac{1}{2\pi}\int_{\Gamma_R \backslash \UC} \frac{|\mu(w)||(J_R - I)(w)|}{|w-z|} |\d w| \\
 & \leq \frac{(\rho_N+\rho_N^{-1})\|\mu\|_{\L^2(\Gamma_R)}}{\min\{\rho_N - |z|,\rho_N^{-1} - |z|\}}c'' \e^{\frac{n}{N}\log\rho}.
\end{align} 
Since $ w $ tends to zero as $ n $ tends to infinity, we may express $ \mu $ as a Neumann series and bound $ \|\mu\|_{\L^2(\Gamma_R)} $ with any constant larger than one for big enough $ n $.
\end{proof}

\subsection{Asymptotic behavior of the moment generating function}\label{subsection:section_proof_of_CUE:the_asymptotic_behavior}
In this section we will calculate the leading term of the moment generating function asymptotically, found from the Riemann-Hilbert analysis, and let $ n $ tend to infinity to prove the main results.

Since the transformations $ m $ to $ R $, done in the Riemann-Hilbert analysis, are reversible, Lemma \ref{lem:section_proof_of_CUE:solution_R_function} gives us the existence of $ m $ and by tracing back $ R\rightarrow S \rightarrow m $ we get the asymptotic behavior of $ m $. By \eqref{eq:section_preliminaries:capital_f_one} and \eqref{eq:section_preliminaries:capital_f_two},
\begin{align}\label{eq:section_proof_of_CUE:relation_between_f_and_g}
F^{(1)}(z) & = m_+(z)f^{(1)}(z) \\
& = R(z)G^{(1)}(z) \\
& = G^{(1)}(z) + (R(z)-I)G^{(1)}(z)
\end{align}
and 
\begin{equation}\label{eq:section_proof_of_CUE:relation_between_f2_and_g2}
F^{(2)}(z) = G^{(2)}(z) + \left((R_+(z)^{-1})^T-I\right)G^{(2)}(z)
\end{equation}
where $ G^{(1)} $ and $ G^{(2)} $ are the leading part of $ F^{(1)} $ and $ F^{(2)} $ that comes from the global parametrix, given by
\begin{align}\label{eq:section_proof_of_CUE:capital_g_one}
G^{(1)}(z) & = P_+(z)
\begin{pmatrix}
 1 & -(1-\varphi_{n,t}(z)^{-1})z^n \\
 0 & 1
\end{pmatrix}
f^{(1)}(z) \\
& = \frac{1}{2 \pi \i}\left(\e^{C_-(\log \varphi_{n,t})(z)}z^n, \e^{-C_+(\log \varphi_{n,t})(z)} \right)^T
\end{align}
and
\begin{equation}\label{eq:section_proof_of_CUE:capital_g_two}
G^{(2)}(z) =  \left(\e^{-C_+(\log \varphi_{n,t})(z)}z^{-n}, -\e^{C_-(\log \varphi_{n,t})(z)} \right)^T
\end{equation}
where $ C_+ $ and $ C_- $ are defined as \eqref{eq:section_proof_of_CUE:cauchy_operator_circle_plus} and \eqref{eq:section_proof_of_CUE:cauchy_operator_circle_minus} respectively but with $ z \in \UC $.

The following lemma is the result of the Riemann-Hilbert analysis, it gives us the leading term of the moment generating function.
\begin{lemma}\label{lem:section_proof_of_CUE:approximation_of_integral}
 Let $ F^{(1)} $, $ F^{(2)} $, $ G^{(1)} $ and $ G^{(2)} $ be defined as in \eqref{eq:section_preliminaries:capital_f_one}, \eqref{eq:section_preliminaries:capital_f_two}, \eqref{eq:section_proof_of_CUE:capital_g_one} and \eqref{eq:section_proof_of_CUE:capital_g_two} respectively. Then
\begin{multline}
 \int_0^1\int_{\UC}(\varphi_{n,1}(z) - 1)F^{(1)'}(z)^T F^{(2)}(z)\d x \d t = \\
 = \int_0^1\int_{\UC}(\varphi_{n,1}(z) - 1)G^{(1)'}(z)^T G^{(2)}(z) \d z \d t + \Ordo\left(\e^{\frac{n}{2N}\log\rho}\right)
\end{multline}
 as $ n $ tends to infinity.
\end{lemma}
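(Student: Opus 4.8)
The plan is to substitute the three decompositions \eqref{eq:section_proof_of_CUE:relation_between_f_and_g}, \eqref{eq:section_proof_of_CUE:relation_between_f2_and_g2} into the integrand $(\varphi_{n,1}(z)-1)F^{(1)'}(z)^TF^{(2)}(z)$ and expand the product. Writing $F^{(1)}=G^{(1)}+(R-I)G^{(1)}$ and $F^{(2)}=G^{(2)}+((R_+^{-1})^T-I)G^{(2)}$, differentiating the first (so that $F^{(1)'}=G^{(1)'}+((R-I)G^{(1)})'$), and multiplying out, one obtains the leading term $(\varphi_{n,1}-1)G^{(1)'\,T}G^{(2)}$ plus a finite number of error terms, each of which contains at least one factor of $R-I$, $R_+^{-1}-I$, or $R'$ evaluated on $\UC$. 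By Lemma \ref{lem:section_proof_of_CUE:solution_R_function} these factors are $\Ordo(\e^{\frac{n}{N}\log\rho})$ uniformly on $\UC$ (note $\log\rho<0$, so this is exponentially small once $n/N\to\infty$, which is guaranteed by \eqref{eq:section_proof_of_CUE:def_N}). The goal is then to show every error term integrates to $\Ordo(\e^{\frac{n}{2N}\log\rho})$.

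First I would record the pointwise estimates we need on $\UC$: from \eqref{eq:section_proof_of_CUE:bound_on_gn} and \eqref{eq:section_proof_of_CUE:bound_of_cauchy_operator_of_logarithm}, $C_\pm(\log\varphi_{n,t})$ is uniformly bounded, hence $G^{(1)},G^{(2)}$ are uniformly bounded in $n$ (the $z^{\pm n}$ factors have modulus $1$ on $\UC$). For the derivative $G^{(1)'}$ one picks up a factor $n$ from differentiating $z^n$ and a factor from differentiating $C_-(\log\varphi_{n,t})$; using the Wiener-norm bound \eqref{eq:section_proof_of_CUE:wiener_norm} and $|\hat f_{\alpha,N}(k)|$ supported in $|k|\le N$, the latter derivative is $\Ordo(N)$, and similarly $\varphi_{n,1}-1$ and its derivative are controlled by $\lambda n^{-s}$ times the Wiener norm of $f_{\alpha,N}$ (bounded uniformly) and of its derivative ($\Ordo(N)$). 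So the integrand of a typical error term is bounded by a polynomial in $n$ and $N$ times $\e^{\frac{n}{N}\log\rho}$; since the circle has length $2\pi$, integration only costs a constant. Finally, the choice $N\ll n^{\min(1,\frac32(1-\alpha))}$ in \eqref{eq:section_proof_of_CUE:def_N} forces $n/N\to\infty$ fast enough that any fixed power of $n$ or $N$ is absorbed, leaving $\Ordo(\e^{\frac{n}{2N}\log\rho})$ after halving the exponent to swallow the polynomial prefactor.

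One subtlety to handle carefully is that the bound in Lemma \ref{lem:section_proof_of_CUE:solution_R_function} for $R(z)-I$ blows up like $1/\min\{\rho_N-|z|,\rho_N^{-1}-|z|\}$ as $|z|\to 1$, so I cannot directly plug in $z\in\UC$. Instead I would use that $R$ is analytic across $\UC$ (indeed $J_R=I$ there), so $R-I$ extends analytically to an annulus $\{|z-1|<c\,(1-\rho_N)\}$ of width comparable to $1-\rho_N\sim \frac{-\log\rho}{N}$ around each point of the unit circle; applying the maximum principle on a slightly smaller annulus gives a bound of the form $\Ordo(\frac{N}{-\log\rho}\,\e^{\frac{n}{N}\log\rho})$ for $R-I$ on $\UC$, and Cauchy's integral formula on that annulus gives $\Ordo\bigl((\frac{N}{-\log\rho})^2\,\e^{\frac{n}{N}\log\rho}\bigr)$ for $R'$. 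These are still only polynomial losses in $N$, hence harmless.

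The main obstacle is therefore the bookkeeping of the derivative terms: differentiating the products $((R-I)G^{(1)})'$ produces a term $R'(z)G^{(1)}(z)$ whose control requires the Cauchy-estimate argument just described, and one must check that none of the three separate loss mechanisms — the factor $n$ from $(z^n)'$, the factor $N$ from differentiating truncated Fourier series, and the factor $N/(-\log\rho)$ from the shrinking-annulus Cauchy estimate — combine to defeat the exponential gain. Since all three are at most polynomial and $e^{\frac{n}{N}\log\rho}$ decays faster than any inverse polynomial in $n$ (because $N=o(n)$), the exponential wins; absorbing the polynomial into a factor $e^{\frac{n}{2N}\log\rho}$ and keeping the other half as the stated error completes the argument. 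The $z^{\pm n}$ oscillation on $\UC$ plays no role here beyond $|z^{\pm n}|=1$; it is the exponential smallness of $R-I$ off the unit circle, propagated to $\UC$ by analyticity, that drives the estimate.
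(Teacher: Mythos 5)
Your proposal is correct and follows the same overall route as the paper: decompose $F^{(i)}$ via $(R-I)$, use Lemma~\ref{lem:section_proof_of_CUE:solution_R_function} for the exponential smallness, propagate bounds to $\UC$ by analyticity, and absorb polynomial losses by halving the exponent. The one real bookkeeping difference is how the derivative estimate is obtained: the paper first bounds $F^{(1)}-G^{(1)}$ on two fixed auxiliary circles of radius $\rho^{\pm 1/n}$ (chosen so that $|z^{\pm n}|=\rho^{\mp1}$ there remains $O(1)$) and then applies Cauchy's integral formula over that contour to get $(F^{(1)}-G^{(1)})'$ on $\UC$ in one stroke, whereas you expand $((R-I)G^{(1)})'$ by the product rule and bound $R-I$, $R'$, $G^{(1)}$, $G^{(1)'}$ separately. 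Both are valid; the paper's version avoids having to track $G^{(1)'}$ explicitly in the intermediate step (though one still needs it for the final $\L^\infty$ product bound).

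Two small inaccuracies in your write-up, neither fatal: (i) the Lemma~\ref{lem:section_proof_of_CUE:solution_R_function} bound does \emph{not} blow up as $|z|\to 1$ — at $|z|=1$ the denominator is $\min(1-\rho_N,\rho_N^{-1}-1)\sim\frac{-\log\rho}{N}$, which is small but positive, so you can evaluate directly there without the maximum-principle detour (the detour is harmless but unnecessary for $R-I$ itself; Cauchy is of course needed for $R'$); (ii) the dominant factor in $G^{(1)'}$ is $n$ (from $(z^n)'$), not $N$ — since $N\ll n$ this only makes the polynomial prefactor larger, which is still swallowed by $\e^{\frac{n}{2N}\log\rho}$.
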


\begin{proof}
To prove this we will in particular use \eqref{eq:section_proof_of_CUE:relation_between_f_and_g} and Lemma \ref{lem:section_proof_of_CUE:solution_R_function}, so a first step is to control $ G^{(1)} $. From \eqref{eq:section_proof_of_CUE:bound_of_cauchy_operator_of_logarithm} and \eqref{eq:section_proof_of_CUE:bound_on_gn} we see that $ C(\log \varphi_{n,t}) $ (or more precisely, the analytic extension of both $ C_+(\log\varphi_{n,t}) $ and $ C_-(\log\varphi_{n,t}) $ respectively) is uniformly bounded for $ \rho_N \leq |z| \leq \rho_N $.
So for $ 1\leq |z| \leq \rho^{-n^{-1}} $,
\begin{equation}
 |G^{(1)}(z)| \leq c'
\end{equation}
for some constant $ c' $, and by \eqref{eq:section_proof_of_CUE:relation_between_f_and_g} and Lemma \ref{lem:section_proof_of_CUE:solution_R_function}
\begin{equation}
 |F^{(1)}(z) - G^{(1)}(z)| \leq \frac{c'c}{\rho_N^{-1} - \rho^{-n^{-1}}}\e^{\frac{n}{N}\log\rho} 
\end{equation}
for $ |z| = \rho^{-n^{-1}} $. Similarly, we get the the same estimate for $ |z| = \rho^{n^{-1}} $ when considering the analytic continuation of $ F^{(1)} - G^{(1)} $. By Cauchy's integral formula,
\begin{equation}
 |F^{(1)'}(z) - G^{(1)'}(z)| \leq \frac{c''}{(\rho_N^{-1} - \rho^{-n^{-1}})(\rho^{-n^{-1}}-1)^2}\e^{\frac{n}{N}\log\rho}
\end{equation}
for $ z \in \UC $ and some constant $ c'' $. From \eqref{eq:section_proof_of_CUE:relation_between_f2_and_g2} we get a similar relation between $ F^{(2)} $ and $ G^{(2)} $. 

All this tells us that for big enough $ n $,
\begin{align}
& \left|\int_{\UC}(\varphi_{n,1}(z)-1) \left(F^{(1)'}(z)^TF^{(2)}(z) - G^{(1)'}(z)^TG^{(2)}(z)\right)\d z\right| \\
\leq & \left\|\left(F^{(1)'}\right)^TF^{(2)} - \left(G^{(1)'}\right)^TG^{(2)}\right\|_{\L^\infty(\UC)} \int_{\UC}|(\varphi_{n,1}(z)-1)||\d z| \\
\leq & c\e^{\frac{n}{2N}\log\rho}.
\end{align}
for some constant $ c>0 $.
\end{proof}

\begin{lemma}\label{lem:section_proof_of_CUE:approximated_characteristic_function}
Let $ G^{(1)} $ and $ G^{(2)} $ be defined as in \eqref{eq:section_proof_of_CUE:capital_g_one} and \eqref{eq:section_proof_of_CUE:capital_g_two} respectively, then
\begin{multline}
\int_0^1 \int_{\UC}(\varphi_{n,1}(z) - 1)G^{(1)'}(z)^T G^{(2)}(z) \d z \d t = \\
= n\widehat{\log\varphi}_{n,1}(0) + \sum_{k=1}^{\infty}k\widehat{\log\varphi}_{n,1}(k)\widehat{\log\varphi}_{n,1}(-k).
\end{multline}
\end{lemma}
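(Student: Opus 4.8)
The statement to prove is a clean identity: the double integral of $(\varphi_{n,1}(z)-1)G^{(1)'}(z)^T G^{(2)}(z)$ over $\UC \times [0,1]$ equals $n\widehat{\log\varphi}_{n,1}(0) + \sum_{k\ge 1} k\,\widehat{\log\varphi}_{n,1}(k)\widehat{\log\varphi}_{n,1}(-k)$. The natural approach is to plug in the explicit formulas \eqref{eq:section_proof_of_CUE:capital_g_one} and \eqref{eq:section_proof_of_CUE:capital_g_two} for $G^{(1)}$ and $G^{(2)}$, compute the inner integrand as an explicit contour integral of Fourier-type expressions, evaluate it by residues / orthogonality on the circle, and only then perform the $t$-integration. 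Throughout, write $\psi = \psi_{n,t} := \log\varphi_{n,t}$, so that $C_\pm \psi$ denote the boundary values on $\UC$ of the Cauchy transform; recall $C_+\psi - C_-\psi = \psi$ and that the Fourier coefficients of $C_+\psi$ are $\hat\psi(k)$ for $k\ge 0$ while those of $C_-\psi$ are $-\hat\psi(k)$ for $k\le -1$.

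\medskip

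\textbf{Step 1: Compute $G^{(1)'}(z)^T G^{(2)}(z)$ explicitly.} From \eqref{eq:section_proof_of_CUE:capital_g_one}, $2\pi\i\, G^{(1)}(z) = \bigl(\e^{C_-\psi(z)}z^n,\ \e^{-C_+\psi(z)}\bigr)^T$, and from \eqref{eq:section_proof_of_CUE:capital_g_two}, $G^{(2)}(z) = \bigl(\e^{-C_+\psi(z)}z^{-n},\ -\e^{C_-\psi(z)}\bigr)^T$. Differentiating the first component of $G^{(1)}$ gives $\frac{1}{2\pi\i}\bigl((C_-\psi)'(z) + n z^{-1}\bigr)\e^{C_-\psi(z)}z^n$ and the second component gives $-\frac{1}{2\pi\i}(C_+\psi)'(z)\e^{-C_+\psi(z)}$. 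Taking the inner product with $G^{(2)}$, the cross terms $\e^{C_-\psi}z^n \cdot \e^{-C_+\psi}z^{-n}$ and $\e^{-C_+\psi}\cdot(-\e^{C_-\psi})$ both produce the factor $\e^{C_-\psi - C_+\psi} = \e^{-\psi} = \varphi_{n,t}^{-1}$. One finds
\begin{equation}
G^{(1)'}(z)^T G^{(2)}(z) = \frac{\varphi_{n,t}(z)^{-1}}{2\pi\i}\Bigl(\bigl((C_-\psi)'(z) + n z^{-1}\bigr) + (C_+\psi)'(z)\Bigr) = \frac{\varphi_{n,t}(z)^{-1}}{2\pi\i}\Bigl(\psi'(z) + (2C_-\psi)'(z) + n z^{-1}\Bigr),
\end{equation}
using $(C_+\psi)' + (C_-\psi)' = \psi' + 2(C_-\psi)'$. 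Since $\varphi_{n,t}^{-1} = \e^{-\psi}$ and $\psi' \e^{-\psi} = -(\e^{-\psi})' $, the first part integrates to zero over the closed contour $\UC$; what remains is $\frac{1}{2\pi\i}\int_\UC \e^{-\psi(z)}\bigl((2C_-\psi)'(z) + nz^{-1}\bigr)\d z$.

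\medskip

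\textbf{Step 2: Evaluate the contour integral via Fourier coefficients.} Both $\e^{-\psi}$ and $\e^{-\psi}(C_-\psi)'$ extend analytically outside $\UC$ (and to $\infty$), while $\e^{-\psi}z^{-1}$ has a simple pole structure at $0$; but the cleanest route is to expand everything in Fourier series on $\UC$ and read off the $z^{-1}$ coefficient (equivalently, use $\frac{1}{2\pi\i}\int_\UC h(z)\d z = \widehat{h}(-1)$ when $h(z)=\sum \widehat h(k) z^k$). Write $\e^{-\psi} = \sum_j a_j z^j$. The term $\frac{n}{2\pi\i}\int_\UC \e^{-\psi(z)}z^{-1}\d z = n\, a_0 = n\, \widehat{\e^{-\psi}}(0)$. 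For the term with $(2C_-\psi)'$: since $C_-\psi(z) = -\sum_{k\le -1}\hat\psi(k)z^k$, its derivative is $-\sum_{k\le -1}k\hat\psi(k)z^{k-1}$, so the whole second integrand's constant-in-$z^{-1}$ extraction produces $\sum_{k\ge 1} 2k\,\hat\psi(-k)\cdot(\text{coefficient of }z^{k-1}\text{ in }\e^{-\psi})$ — and that coefficient is itself recoverable. The key simplification is that this can be organized so the answer for the inner integral becomes $\frac{\partial}{\partial t}$ of the desired expression, i.e. one shows directly
\begin{equation}
\int_{\UC}(\varphi_{n,1}(z)-1)G^{(1)'}(z)^T G^{(2)}(z)\d z = \frac{\partial}{\partial t}\Bigl(n\,\widehat{\log\varphi}_{n,t}(0) + \sum_{k=1}^\infty k\,\widehat{\log\varphi}_{n,t}(k)\widehat{\log\varphi}_{n,t}(-k)\Bigr).
\end{equation}
To see the structure: differentiating $\widehat{\log\varphi}_{n,t}(k)$ in $t$ and using $\frac{\partial}{\partial t}\log\varphi_{n,t} = \frac{\partial_t \varphi_{n,t}}{\varphi_{n,t}} = \frac{\varphi_{n,1}-1}{\varphi_{n,t}}\cdot(\text{since }\varphi_{n,t}-1 = t(\varphi_{n,1}-1))$, so $\partial_t\psi_{n,t} = (\varphi_{n,1}-1)\varphi_{n,t}^{-1} = (\varphi_{n,1}-1)\e^{-\psi_{n,t}}$, which is exactly the weight $\varphi_{n,1}-1$ times the $\e^{-\psi}$ appearing in Step 1. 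Matching Fourier coefficients of $\partial_t\psi_{n,t}$ against the contour-integral output of Step 1 gives the equality of integrands termwise; integrating in $t$ from $0$ to $1$, and noting $\psi_{n,0} = \log 1 = 0$ so all Fourier coefficients vanish at $t=0$, yields the claim with $\widehat{\log\varphi}_{n,1}$ on the right.

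\medskip

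\textbf{Main obstacle.} The routine differentiations are harmless; the delicate point is the bookkeeping in Step 2 — correctly separating $C_+$ from $C_-$ contributions, tracking which Fourier coefficients carry which sign, and verifying that the quadratic term genuinely assembles into $\sum_{k\ge 1} k\,\hat\psi(k)\hat\psi(-k)$ rather than, say, $\sum k\,\hat\psi(k)\hat\psi(-k)$ with a stray factor or an index shift. The cleanest way to control this is to avoid residue gymnastics and instead use the identity $\frac{1}{2\pi\i}\int_\UC g'(z)h(z)\d z = \sum_k k\,\hat g(k)\hat h(-k)$ for Laurent series $g,h$ on $\UC$, applied with $g = 2C_-\psi_{n,t}$ (whose only nonzero coefficients are at $k\le -1$) and $h = \e^{-\psi_{n,t}}$ — combined with the fact that, before the $t$-integration, $\partial_t(\text{RHS})$ reproduces exactly $n\widehat{\partial_t\psi}(0) + \sum_{k\ge 1}k\bigl(\widehat{\partial_t\psi}(k)\hat\psi(-k) + \hat\psi(k)\widehat{\partial_t\psi}(-k)\bigr)$, and checking this matches Step 1 term by term. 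Provided the signs line up, integrating in $t$ completes the proof.
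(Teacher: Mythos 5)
Your overall strategy is the same as the paper's: plug in the explicit formulas for $G^{(1)}$ and $G^{(2)}$, compute $G^{(1)'}(z)^T G^{(2)}(z)$, recognize the $\frac{\partial}{\partial t}$ structure via $\frac{\varphi_{n,1}-1}{\varphi_{n,t}}=\partial_t\log\varphi_{n,t}$, and integrate in $t$ using $\psi_{n,0}\equiv 0$. The Step 1 computation of $G^{(1)'T}G^{(2)}=\frac{\e^{-\psi}}{2\pi\i}\bigl((C_+\psi)'+(C_-\psi)'+n/z\bigr)$ is correct, and so is rewriting $(C_+\psi)'+(C_-\psi)'=\psi'+2(C_-\psi)'$. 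However, there are two genuine gaps.

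First, the weight $(\varphi_{n,1}-1)$ disappears from your displayed formulas. After Step 1 you write ``what remains is $\frac{1}{2\pi\i}\int_\UC \e^{-\psi}\bigl((2C_-\psi)'+nz^{-1}\bigr)\d z$'' and then in Step 2 you compute the $n/z$ contribution as $n\,\widehat{\e^{-\psi}}(0)$ and propose the identity $\frac{1}{2\pi\i}\int g'h\,\d z=\sum_k k\hat g(k)\hat h(-k)$ with $h=\e^{-\psi_{n,t}}$. None of these carry the factor $(\varphi_{n,1}-1)$ that the lemma actually has. The crucial object is $(\varphi_{n,1}-1)\e^{-\psi_{n,t}}=\partial_t\psi_{n,t}$, not $\e^{-\psi_{n,t}}$: only with that factor does the $n/z$ term give $n\,\widehat{\partial_t\psi}(0)=\partial_t\bigl(n\widehat{\log\varphi}_{n,t}(0)\bigr)$, and only with it does $\sum_{k\ge 1}k\hat\psi(-k)\widehat{\partial_t\psi}(k)+\sum_{k\ge1}k\hat\psi(k)\widehat{\partial_t\psi}(-k)=\partial_t\sum_{k\ge1}k\hat\psi(k)\hat\psi(-k)$. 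You do note $\partial_t\psi=(\varphi_{n,1}-1)\e^{-\psi}$ in passing, but the computation you actually perform uses the wrong $h$.

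Second, the justification you give for discarding the $\psi'$ piece is not valid. The statement that $\psi'\e^{-\psi}=-(\e^{-\psi})'$ integrates to zero over $\UC$ only shows $\int_\UC\e^{-\psi}\psi'\,\d z=0$; what the lemma requires is $\int_\UC(\varphi_{n,1}-1)\e^{-\psi}\psi'\,\d z=\int_\UC\partial_t\psi\cdot\psi'\,\d z=0$, which is a different integral. This last vanishing is in fact true, but for a different reason: using $\varphi_{n,t}'=t\,\varphi_{n,1}'$ one checks
\begin{equation}
\partial_t\psi\cdot\psi'=\frac{t(\varphi_{n,1}-1)\varphi_{n,1}'}{\varphi_{n,t}^2}=\frac{1}{t}\,\psi'-\partial_z\bigl(\partial_t\psi\bigr),
\end{equation}
and both $\psi'$ and $\partial_z(\partial_t\psi)$ are total $z$-derivatives of single-valued functions on $\UC$. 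Equivalently (and this is what the paper does) one never splits at all: one keeps $(C_+\psi)'+(C_-\psi)'$ as a unit, whose Laurent expansion pairs against $\widehat{\partial_t\psi}(\pm k)$ to produce $\sum_{k\ge1}k\,\partial_t(\hat\psi(k)\hat\psi(-k))$ directly, sidestepping any need to argue separately about the $\psi'$ contribution. As written, the ``Main obstacle'' paragraph sets up the right target identity but leaves the actual bookkeeping to ``provided the signs line up,'' which is exactly where these two issues bite; the proof is therefore incomplete.
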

\begin{proof}
This result is a variation to  a computation in  \cite[Ex. 3]{Deift99}, but for completeness we include a proof.

A straightforward calculation, using \eqref{eq:section_proof_of_CUE:capital_g_one}, \eqref{eq:section_proof_of_CUE:capital_g_two}  and $C_+-C_-=I$, shows
\begin{multline}\label{eq:section_proof_of_CUE:first_simplification_of_integral}
\int_0^1\int_{\UC}(\varphi_{n,1}(z) - 1)G^{(1)'}(z)^T G^{(2)}(z) \d z \d t =\\ 
= \int_0^1 \int_{\UC}\frac{\varphi_{n,1}(z)-1}{\varphi_{n,t}}\left(C_+(\log \varphi_{n,t})'(z) + C_-(\log \varphi_{n,t})'(z) + \frac{n}{z}\right)\frac{\d z}{2\pi i}\d t.
\end{multline}
Since by definition \eqref{eq:section_proof_of_CUE:def_of_varphi} of $\varphi_{n,t}$ we have
\begin{equation}
 \frac{\varphi_{n,1}(z)-1}{\varphi_{n,t}(z)}=\frac{\partial}{\partial t} \log\varphi_{n,t}(z),
\end{equation}
we  see that the last term in \eqref{eq:section_proof_of_CUE:first_simplification_of_integral} is equal to $ n\widehat{\log\varphi}_{n,1}(0) $. 
To compute the first part of \eqref{eq:section_proof_of_CUE:first_simplification_of_integral}, write the Cauchy operator as in \eqref{eq:section_proof_of_CUE:cauchy_operator_circle_plus}, then
\begin{align}
&\int_0^1 \int_{\UC}\frac{\varphi_{n,1}(z)-1}{\varphi_{n,t}(z)} \left(C_+(\log \varphi_{n,t})'(z) + C_-(\log \varphi_{n,t})'(z)\right)\frac{\d z}{2\pi i}\d t \\
= & \int_0^1\int_{\UC}\frac{\partial}{\partial t}(\log \varphi_{n,t}(z)) \sum_{k = 0}^{\infty}k\left(\widehat{\log\varphi}_{n,t}(k)z^{k-1} + \widehat{\log\varphi}_{n,t}(-k)z^{-k-1}\right)\frac{\d z}{2\pi i} \d t \\
= & \int_0^1\sum_{k = 0}^{\infty}k \left(\widehat{\log\varphi}_{n,t}(k)\frac{\partial}{\partial t}\left(\int_{\UC}\log \varphi_{n,t}(z)z^k\frac{\d z}{2\pi i z}\right)\right. \\
 & \left.+ \widehat{\log\varphi}_{n,t}(-k)\frac{\partial}{\partial t}\left(\int_{\UC}\log\varphi_{n,t}(z)z^{-k}\frac{\d z}{2\pi i z} \right)\right)\d t \\
= & \int_0^1\sum_{k=0}^{\infty}k \frac{\partial}{\partial t}\left(\widehat{\log\varphi}_{n,t}(k)\widehat{\log\varphi}_{n,t}(-k)\right)\d t \\
= & \sum_{k=0}^{\infty}k\widehat{\log\varphi}_{n,1}(k)\widehat{\log\varphi}_{n,1}(-k),
\end{align}
where the changes of limits are easily justified.
\end{proof}

In the next step we replace $\log \varphi_{n,1}$ by $\lambda f_{\alpha,N}$ in the second term in Lemma~ \ref{lem:section_proof_of_CUE:approximated_characteristic_function}. Here it is important to have a more precise choice of $N$. We choose $N$ such that \eqref{eq:section_proof_of_CUE:def_N} holds and 
\begin{equation}
 n^{-2\delta}\frac{N^{\frac{1}{2}}}{n^{\frac{1-\alpha}{2}}} \to 0,
\end{equation}
as $n \to \infty$. 
\begin{lemma} \label{lem:section_proof_of_CUE:varphitof} We have 
\begin{multline}
 \sum_{k=0}^{\infty}k\widehat{\log\varphi}_{n,1}(k)\widehat{\log\varphi}_{n,1}(-k) \label{eq:section_proof_of_CUE:general_sobolev_term}\\
 = \frac{\lambda^2 n^{-2s}}{n^{1-\alpha}}\sum_{k=1}^N\frac{k}{n^{1-\alpha}}\F(f)\left(\frac{k}{n^{1-\alpha}}\right)\F(f)\left(\frac{-k}{n^{1-\alpha}}\right) + \Ordo\left(n^{-2\delta}\frac{N^{\frac{1}{2}}}{n^{\frac{1-\alpha}{2}}}\right),
\end{multline}
as $n \to \infty$. 
\end{lemma}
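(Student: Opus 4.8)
The plan is to replace $\log\varphi_{n,1}$ by its linear part $g:=\lambda n^{-s}f_{\alpha,N}$ in both factors of the bilinear sum and then estimate the error. The starting point is the exact structure of $\log\varphi_{n,1}$: since $\gamma_n=1-\kappa n^{-\delta}$ one has the factorisation $\varphi_{n,1}=\e^{g}\bigl(1+\kappa n^{-\delta}(\e^{-g}-1)\bigr)$, hence
\[
\log\varphi_{n,1}=g+r_n,\qquad r_n:=\log\bigl(1+\kappa n^{-\delta}(\e^{-g}-1)\bigr).
\]
Expanding the logarithm and the exponential gives $r_n=-\kappa n^{-\delta}g+\sum_{j\ge2}d_{n,j}g^{\,j}$ with $|d_{n,j}|\le C^{\,j}\kappa n^{-\delta}$; thus $r_n$ vanishes when $\gamma_n=1$, is a power series in $g$ with coefficients of order $n^{-\delta}$, and each $g^{\,j}$ is a trigonometric polynomial of degree $jN$. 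That $\lambda$ is small and that, by \eqref{eq:section_proof_of_CUE:wiener_norm}, $\|f_{\alpha,N}\|_W=\Ordo(1)$ uniformly in $n$ (so $\|g\|_W=\Ordo(n^{-s})$) is what makes all these series converge.

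Next I would identify the main term and isolate the remainder. From \eqref{eq:section_proof_of_CUE:fourier_coefficient_in_terms_of_fourier_transform} we have $\hat g(k)=\tfrac{\lambda n^{-s}}{n^{1-\alpha}}\F f\!\bigl(\tfrac{k}{n^{1-\alpha}}\bigr)$ for $|k|\le N$ and $\hat g(k)=0$ for $|k|>N$, so that
\[
\sum_{k=1}^{\infty}k\,\hat g(k)\hat g(-k)=\frac{\lambda^2 n^{-2s}}{n^{1-\alpha}}\sum_{k=1}^{N}\frac{k}{n^{1-\alpha}}\F f\!\left(\tfrac{k}{n^{1-\alpha}}\right)\F f\!\left(\tfrac{-k}{n^{1-\alpha}}\right),
\]
which is precisely the asserted leading term; by the Riemann-sum argument behind \eqref{eq:section_proof_of_CUE:H_bound} (using $N\gg n^{1-\alpha}$ and the hypothesis on $f$) this is $\Ordo(n^{-2s})$, converging to $\lambda^2 n^{-2s}$ times $\int_0^{\infty}\xi|\F f(\xi)|^2\d\xi$. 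Writing $\widehat{\log\varphi}_{n,1}(k)=\hat g(k)+\hat r_n(k)$ and noting that the $k=0$ summand is absent, the quantity to be bounded is
\[
2\,\re\!\sum_{k\ge1}k\,\hat g(k)\hat r_n(-k)\;+\;\sum_{k\ge1}k\,\hat r_n(k)\hat r_n(-k).
\]

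To control these sums I would use the Cauchy--Schwarz inequality for the bilinear form attached to the $\dot H^{1/2}$-seminorm $[h]^2:=\sum_{k\ge1}k|\hat h(k)|^2$, namely $\bigl|\sum_{k\ge1}k\,\hat u(k)\hat v(-k)\bigr|\le[u][v]$, which reduces everything to estimates of $[g]$ and $[r_n]$. One has $[g]=\Ordo(n^{-s})$ as above. For $r_n$ I would estimate term by term: the linear part contributes $\kappa n^{-\delta}[g]=\Ordo(n^{-\delta-s})$, while for $j\ge2$ one bounds $[g^{\,j}]\le(jN)^{1/2}\|g^{\,j}\|_{L^2(\UC)}=(jN)^{1/2}\|g\|_{L^{2j}(\UC)}^{\,j}$, and here the decisive point is that, since $f$ is compactly supported, $f_{\alpha,N}$ behaves like $f(n^{1-\alpha}\cdot)$, whence $\|f_{\alpha,N}\|_{L^{2j}(\UC)}^{2j}=\Ordo(n^{-(1-\alpha)})$ — this is where the factor $N^{1/2}/n^{(1-\alpha)/2}$ comes from. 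Summing the geometrically small series gives $[r_n]=\Ordo(n^{-\delta-s})+\Ordo\bigl(n^{-\delta-2s}N^{1/2}/n^{(1-\alpha)/2}\bigr)$, and inserting this into the Cauchy--Schwarz bound controls the remainder by a multiple of $n^{-\delta-2s}+n^{-2\delta}N^{1/2}/n^{(1-\alpha)/2}$; this tends to $0$ by the choice of $N$, which is all that is used afterward, and a careful accounting — combining the $n^{-\delta}$-order piece $-2\kappa n^{-\delta}\sum_{k\ge1}k\hat g(k)\hat g(-k)$ with the $\gamma_n^2$-factor produced implicitly by the linearisation — leaves the stated error $\Ordo\bigl(n^{-2\delta}N^{1/2}/n^{(1-\alpha)/2}\bigr)$.

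The step I expect to be the main obstacle is precisely the bound on $[r_n]$: because $r_n$ is not a trigonometric polynomial — its Fourier support is all of $\ZZ$ — one cannot simply bound it by ``degree times $L^2$-norm'', and one has to work through the power-series representation of $r_n$ in $g$ (this is why the uniform Wiener-norm estimate \eqref{eq:section_proof_of_CUE:wiener_norm} and the smallness of $\lambda$ and $\kappa n^{-\delta}$ enter), keeping track of exactly which power of $N/n^{1-\alpha}$ is paid for the degree-$jN$ corrections $g^{\,j}$. It is the interplay of this $N$-dependence with the constraint $n^{1-\alpha}\ll N$ that dictates the admissible range of $N$ recorded just before the statement.
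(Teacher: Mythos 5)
Your decomposition $\log\varphi_{n,1}=g+r_n$ with $g=\lambda n^{-s}f_{\alpha,N}$, the identification of the leading term via $\hat g(k)=\frac{\lambda n^{-s}}{n^{1-\alpha}}\F f\bigl(\tfrac{k}{n^{1-\alpha}}\bigr)$, and the Cauchy--Schwarz reduction for the bilinear form $[u,v]=\sum_{k\ge1}k\hat u(k)\hat v(-k)$ are exactly the paper's strategy. Where you genuinely diverge is in the estimate of $[r_n]$. The paper applies the interpolation inequality $\sum_k|k||\hat r_n(k)|^2\le\|r_n'\|_{\L^2(\UC)}\|r_n\|_{\L^2(\UC)}$ together with the pointwise dominations $|r_n|\lesssim n^{-\delta}|f_{\alpha,N}|$ and $|r_n'|\lesssim n^{-\delta}|f_{\alpha,N}'|$ (valid since $f_{\alpha,N}$ is uniformly bounded, cf.\ \eqref{eq:section_proof_of_CUE:bound_on_gn}), and then $\|f_{\alpha,N}'\|_{\L^2}\lesssim N^{1/2}$, $\|f_{\alpha,N}\|_{\L^2}\sim n^{-(1-\alpha)/2}$. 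This bypasses entirely the obstacle you flag (that $r_n$ has unbounded Fourier support) and is considerably shorter than your term-by-term power-series summation of $[g^{\,j}]\le(jN)^{1/2}\|g\|_{\L^{2j}}^{\,j}$. That said, your route is valid, and it has the small merit of distinguishing the $\Ordo(n^{-\delta-s})$ contribution of the linear piece of $r_n$ from the $\Ordo(n^{-\delta-2s}N^{1/2}n^{-(1-\alpha)/2})$ contribution of the quadratic and higher pieces.

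You are also right to sense friction in the last step: the cross term contains $-2\kappa n^{-\delta}\sum_{k\ge1}k\hat g(k)\hat g(-k)=\Theta(n^{-\delta-2s})$, which for any $N$ compatible with \eqref{eq:section_proof_of_CUE:def_N} is \emph{not} $\Ordo(n^{-2\delta}N^{1/2}n^{-(1-\alpha)/2})$ as the lemma states. Your proposed remedy --- putting a $\gamma_n^2$ prefactor on the leading term, i.e.\ linearising around $\gamma_n g$ rather than $g$ --- is the right fix. The paper's own proof passes over this point (and also appears to drop a square root when going from $\bigl(\sum_k|k||\hat r_n(k)|^2\bigr)^{1/2}$ to $\|r_n'\|_{\L^2}\|r_n\|_{\L^2}$), but the imprecision is harmless downstream: in the proofs of Theorems~\ref{th:transCUEfirsttwocases} and~\ref{th:transCUEthirdcase} the lemma is invoked only to conclude that the error tends to zero, and both your bound and the paper's deliver that. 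So your proposal is essentially correct, with a genuinely different remainder estimate, and the $\gamma_n^2$ remark pinpoints a real (if inconsequential) looseness in the stated error term.
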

\begin{proof}
 Write
\begin{equation}\label{eq:section_proof_of_CUE:first_expansion_of_varphi}
 \log \varphi_{n,1} = \lambda n^{-s}f_{\alpha,N}+\log\left(1+\kappa n^{-\delta}\left(\e^{-\lambda n^{-s}f_{\alpha,N}}-1\right)\right).
\end{equation}
Using Cauchy-Schwartz inequality repeatedly yields 
\begin{align}
 & \left|\sum_{k=0}^{\infty}k\widehat{\log\varphi}_{n,1}(k)\widehat{\log\varphi}_{n,1}(-k) - \sum_{k=0}^{\infty}k\lambda n^{-s}\hat{f}_{\alpha,N}(k)\lambda n^{-s}\hat{f}_{\alpha,N}(-k)\right| \\
 \leq &  \frac{1}{2}\sum_{k=-\infty}^{\infty}|k||\widehat{\log\varphi}_{n,1}(k)||\widehat{\log\varphi}_{n,1}(-k)-\lambda n^{-s}\hat{f}_{\alpha,N}(-k)| \\
 & + \frac{1}{2}\sum_{k=-\infty}^{\infty}|k||\widehat{\log\varphi}_{n,1}(k)-\lambda n^{-s}\hat{f}_{\alpha,N}(k)||\lambda n^{-s}\hat{f}_{\alpha,N}(-k)| \\
  \leq &  \frac{1}{2}\left(\sum_{k=-\infty}^{\infty}|k||\widehat{\log\varphi}_{n,1}(k)|^2\right)^{\frac{1}{2}}\left(\sum_{k=-\infty}^{\infty}|k||\widehat{\log\varphi}_{n,1}(k)-\lambda n^{-s}\hat{f}_{\alpha,N}(k)|^2\right)^{\frac{1}{2}} \\
 & + \frac{1}{2}\left(\sum_{k=-\infty}^{\infty}|k||\widehat{\log\varphi}_{n,1}(k)-\lambda n^{-s}\hat{f}_{\alpha,N}(k)|^2\right)^{\frac{1}{2}}\left(\sum_{k=-\infty}^{\infty}|k||\lambda n^{-s}\hat{f}_{\alpha,N}(k)|^2\right)^{\frac{1}{2}} \\
 \leq & \frac{1}{2}\left(\left(\sum_{k=-\infty}^{\infty}|k||\widehat{\log\varphi}_{n,1}(k)|^2\right)^{\frac{1}{2}}+\left(\sum_{k=-\infty}^{\infty}|k||\lambda n^{-s}\hat{f}_{\alpha,N}(k)|^2\right)^{\frac{1}{2}}\right) \\
 & \times \|\frac{\d}{\d\theta}(\log\varphi_{n,1}-\lambda n^{-s}f_{\alpha,N})\|_{\L^2(\UC)}\|\log\varphi_{n,1}-\lambda n^{-s}f_{\alpha,N}\|_{\L^2(\UC)}.
\end{align}
By \eqref{eq:section_proof_of_CUE:H_bound} it is clear that we need to show that the last factor tends to zero. For that we use \eqref{eq:section_proof_of_CUE:first_expansion_of_varphi} and the fact that $ f_{\alpha,N} $ is uniformly bounded on $ \UC $, to see that
\begin{align}
 & \|\frac{\d}{\d\theta}(\log\varphi_{n,1}-\lambda n^{-s}f_{\alpha,N})\|_{\L^2(\UC)}\|\log\varphi_{n,1}-\lambda n^{-s}f_{\alpha,N}\|_{\L^2(\UC)} \\
 \leq & c n^{-2\delta}\|\frac{\d}{\d\theta}f_{\alpha,N}\|_{\L^2(\UC)}\|f_{\alpha,N}\|_{\L^2(\UC)} \\
 \leq & c n^{-2\delta}\frac{N^{\frac{1}{2}}}{n^{\frac{1-\alpha}{2}}}\left(\frac{1}{n^{1-\alpha}}\sum_{k=-N}^N\frac{|k|}{n^{1-\alpha}}\left|\F(f)\left(\frac{k}{n^{1-\alpha}}\right)\right|^2\right)^{\frac{1}{2}} \\
 &\times\left(\frac{1}{n^{1-\alpha}}\sum_{k=-N}^N\left|\F(f)\left(\frac{k}{n^{1-\alpha}}\right)\right|^2\right)^{\frac{1}{2}}
\end{align}
where $ N^{\frac{1}{2}} $ comes from the $ \L^2 $-norm of the derivative and $ n^{-\frac{1-\alpha}{2}} $ comes from the $ \L^2 $-norm of the function, and $ c $ is some constant. By \eqref{eq:section_proof_of_CUE:H_bound} and \eqref{eq:section_proof_of_CUE:L2_bound} this behaves as
\begin{equation}\label{eq:section_proof_of_CUE:order_of_H_norm_error}
 n^{-2\delta}\frac{N^{\frac{1}{2}}}{n^{\frac{1-\alpha}{2}}}
\end{equation}
as $ n \rightarrow \infty $. Hence by \eqref{eq:section_proof_of_CUE:order_of_H_norm_error},  \eqref{eq:section_proof_of_CUE:def_of_approximated_function} and \eqref{eq:section_proof_of_CUE:fourier_coefficient_in_terms_of_fourier_transform}
we obtain the statement. 
\end{proof}

We need a final estimate.
\begin{lemma}
We have 
\begin{equation}\label{eq:section_proof_of_CUE:order_of_l1}
 \int_{-\pi}^{\pi}|f_{\alpha,N}(\e^{\i\theta})-f_{\alpha}(\theta)|\d\theta = o(n^{\alpha-1}).
\end{equation}
as $n \to \infty$. 
\end{lemma}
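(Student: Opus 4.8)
The plan is to recognize that $f_{\alpha,N}(\e^{\i\theta})$ is exactly the $N$-th partial Fourier sum $S_Nf_\alpha$ of $f_\alpha(\theta)=f(n^{1-\alpha}\theta)$, viewed as a (continuous) function on the circle $[-\pi,\pi]$, so that, writing $h=n^{1-\alpha}$, the statement to prove is $\|S_Nf_\alpha-f_\alpha\|_{\L^1([-\pi,\pi])}=o(h^{-1})$. Since $f$ is supported in $[-\pi,\pi]$, the function $f_\alpha$ is supported in $I=[-\pi h^{-1},\pi h^{-1}]$, and I would split the integral into the part over the slightly enlarged interval $\widetilde I=[-2\pi h^{-1},2\pi h^{-1}]$ and the part over its complement $[-\pi,\pi]\setminus\widetilde I$, and treat the two pieces separately.

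On $\widetilde I$ the support has short length $4\pi h^{-1}$, and here I would just use Cauchy--Schwarz and Parseval: $\|S_Nf_\alpha-f_\alpha\|_{\L^1(\widetilde I)}\le |\widetilde I|^{1/2}\|S_Nf_\alpha-f_\alpha\|_{\L^2([-\pi,\pi])}$ with $\|S_Nf_\alpha-f_\alpha\|_{\L^2}^2=2\pi\sum_{|k|>N}|\hat f_\alpha(k)|^2$. By \eqref{eq:section_proof_of_CUE:fourier_coefficient_in_terms_of_fourier_transform} the last sum equals $h^{-1}\bigl(h^{-1}\sum_{|k|>N}|\F(f)(k/h)|^2\bigr)$, whose parenthesis tends to zero by exactly the computation in \eqref{eq:section_proof_of_CUE:vanishing_L2_tail}; hence this piece is at most $(4\pi h^{-1})^{1/2}\cdot o(h^{-1/2})=o(h^{-1})$. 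Note that this part uses only $N\gg h$.

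On $[-\pi,\pi]\setminus\widetilde I$ we have $f_\alpha\equiv 0$, so there $S_Nf_\alpha-f_\alpha=S_Nf_\alpha$, and I would write $S_Nf_\alpha(\theta)=\frac1{2\pi}\int_ID_N(\theta-\phi)f_\alpha(\phi)\d\phi$ with the Dirichlet kernel $D_N(\psi)=\sin((N+1/2)\psi)/\sin(\psi/2)$. For $|\theta|>2\pi h^{-1}$ and $\phi\in I$ one has $|\theta-\phi|\ge|\theta|/2$, so $1/\sin((\theta-\phi)/2)$ is smooth and $O(|\theta|^{-1})$ in that range; writing $\sin((N+1/2)\psi)=\im\,\e^{\i(N+1/2)\psi}$ and pulling the factor $\e^{\i(N+1/2)\theta}$ out of the integral reduces the task to estimating the oscillatory integral $\int_I\e^{-\i(N+1/2)\phi}g_\theta(\phi)\d\phi$, where $g_\theta(\phi)=f_\alpha(\phi)/\sin((\theta-\phi)/2)$. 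By Lemma \ref{lem:section_proof_of_CUE:holder}, $f$ is H\"older continuous with some exponent $\beta\in(0,1)$; hence $f_\alpha$ is H\"older-$\beta$ with seminorm $O(h^\beta)$, and therefore (using $h|\theta|\ge 2\pi$ to control the $1/\sin$-factor) $g_\theta$ is H\"older-$\beta$ on $I$ with seminorm $O(h^\beta/|\theta|)$. The classical shift identity $\int\e^{-\i M\phi}g(\phi)\d\phi=-\int\e^{-\i M\phi}g(\phi+\pi/M)\d\phi$, applied with $M=N+1/2$, then yields
\[
\Bigl|\int_I\e^{-\i(N+1/2)\phi}g_\theta(\phi)\d\phi\Bigr|\le\tfrac12\int\bigl|g_\theta(\phi)-g_\theta(\phi+\tfrac{\pi}{M})\bigr|\d\phi\le C\,\frac{h^\beta}{|\theta|}\,\frac{1}{N^\beta}\,\frac{1}{h},
\]
so that $|S_Nf_\alpha(\theta)|\le C\,h^{\beta-1}|\theta|^{-1}N^{-\beta}$ for $|\theta|>2\pi h^{-1}$; integrating the $|\theta|^{-1}$ over $2\pi h^{-1}<|\theta|\le\pi$ costs only a logarithm, giving $\|S_Nf_\alpha\|_{\L^1([-\pi,\pi]\setminus\widetilde I)}\le C\,h^{\beta-1}N^{-\beta}\log h$.

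The main obstacle is precisely this last estimate: a naive Dirichlet-kernel bound (or the global $\L^2\to\L^1$ inequality) only gives $O(h^{-1}\log h)$, which is not $o(h^{-1})$, and one really has to exploit the interplay between the H\"older regularity of $f$ and the oscillation of $D_N$ to gain the factor $N^{-\beta}$. To finish one needs $h^\beta\log h=o(N^\beta)$, that is, a mild lower bound such as $N\gg n^{1-\alpha}(\log n)^{1/\beta}$ in addition to \eqref{eq:section_proof_of_CUE:def_N}; this is harmless, since $\min(1,\tfrac32(1-\alpha))>1-\alpha$ leaves polynomial room above $n^{1-\alpha}$, and it is also compatible with the more precise choices of $N$ imposed later in the argument. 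With such an $N$, both pieces are $o(h^{-1})=o(n^{\alpha-1})$, which is the claim.
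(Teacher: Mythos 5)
Your proof is correct and follows the same basic blueprint as the paper---split the integral at $|\theta|\sim 2\pi/n^{1-\alpha}$ and handle the near-support region with an $\L^2$ estimate and the far region with a shift/H\"older oscillation argument---but you execute the near-support piece by a genuinely different, cleaner route. The paper works in two stages: it first compares the Riemann sum (Dirichlet kernel) to the truncated inverse Fourier integral (sine kernel) via a Riemann--Lebesgue and dominated-convergence argument over the whole interval, and then compares that truncated integral to $f$, splitting into near and far. You instead recognize $f_{\alpha,N}(\e^{\i\theta})$ directly as the $N$-th partial Fourier sum $S_Nf_\alpha$ and apply Parseval together with the tail estimate \eqref{eq:section_proof_of_CUE:vanishing_L2_tail} and Cauchy--Schwarz on the short interval $\widetilde I$; this collapses the paper's two-step comparison into one and yields the near-support bound $o(n^{\alpha-1})$ immediately. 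Your far-from-support estimate works with the Dirichlet kernel directly rather than the sine kernel, but the mechanism (pull out the oscillatory phase, shift by $\pi/(N+1/2)$, exploit Lemma~\ref{lem:section_proof_of_CUE:holder}) is the same as the paper's, and it produces the same $\log n\,\bigl(n^{1-\alpha}/N\bigr)^{\beta}$ bound. Your observation that this final step requires $N\gg n^{1-\alpha}(\log n)^{1/\beta}$ is correct and actually a slightly more careful accounting than the paper, which arrives at the same expression $\ln(n)\bigl(\tfrac{n^{1-\alpha}}{N+1/2}\bigr)^{\delta}\to 0$ without remarking that the bare condition $n^{1-\alpha}\ll N$ does not literally enforce it; as you note, this is harmless because \eqref{eq:section_proof_of_CUE:def_N} leaves polynomial room for $N$.
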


\begin{proof}
Note that for $ \alpha = 1 $, this would be a trivial  statement, since $ \|\cdot \|_{\L^1(\UC)} \leq \sqrt{2\pi}\| \cdot \|_{\L^2(\UC)} $ and $f_{1,N}$ is a truncated Fourier series of $f_1$ (with the natural identification of the interval $[-\pi,\pi)$ with the unit circle).  But for $0<\alpha<1$ the function $f_\alpha$ also depends on $n$ and the statement is not immediate.

Note that by \eqref{eq:section_proof_of_CUE:def_of_approximated_function}, \eqref{eq:section_proof_of_CUE:fourier_coefficient_in_terms_of_fourier_transform}, and a change of variable  we can write
\begin{multline}\label{eq:section_proof_of_CUE:l1_error}
 n^{1-\alpha}\int_{-\pi}^\pi\left|f_{\alpha,N}(\e^{\i\theta})-f_{\alpha}(\theta)\right|\d\theta \\
 = \int_{-\pi n^{1-\alpha}}^{\pi n^{1-\alpha}}\left|\frac{1}{n^{1-\alpha}} \sum_{k=-N}^N\F(f)\left(\frac{k}{n^{1-\alpha}}\right)\e^{\i\frac{k}{n^{1-\alpha}}} - f(\theta)\right|\d\theta.
 \end{multline}
The sum in the integrand is a Riemann sum for the inverse Fourier transform of the Fourier transform of $f$ and, thus, we expect it to converge to $f$. We will show that it does in the sense that  \eqref{eq:section_proof_of_CUE:l1_error} is $o(1)$ as $n \to \infty$, from which \eqref{eq:section_proof_of_CUE:order_of_l1} follows. %In the argument we will use the fact  that by assumption  $f$ has compact support and is H\"older continous with parameter $\delta<\eps/2$ (by  Lemma \ref{lem:section_proof_of_CUE:holder}). 
First, we compare  the sum in the integrand to the truncated inverse Fourier integral of the Fourier transform of $f$. 
Then,  by the triangle inequality, we obtain
 \begin{multline}\label{eq:section_proof_of_CUE:l1_error2}
  n^{1-\alpha}\int_{-\pi}^\pi\left|f_{\alpha,N}(\e^{\i\theta})-f_{\alpha}(\theta)\right|\d\theta \\
 \leq \int_{-\pi n^{1-\alpha}}^{\pi n^{1-\alpha}}\left|\frac{1}{n^{1-\alpha}} \sum_{k=-N}^N\F(f)\left(\frac{k}{n^{1-\alpha}}\right)\e^{\i\frac{k}{n^{1-\alpha}}} - \int_{-\frac{N+1/2}{n^{1-\alpha}}}^{\frac{N+1/2}{n^{1-\alpha}}}\F(f)(\xi)\e^{\i\xi\theta}\d\xi\right|\d\theta \\
 + \int_{-\pi n^{1-\alpha}}^{\pi n^{1-\alpha}}\left|\int_{-\frac{N+1/2}{n^{1-\alpha}}}^{\frac{N+1/2}{n^{1-\alpha}}}\F(f)(\xi)\e^{\i\xi\theta}\d\xi-f(\theta)\right|\d\theta. 
\end{multline}

We show that the first term  at the right-hand side of \eqref{eq:section_proof_of_CUE:l1_error2} tends to zero by comparing  the Dirichlet kernel with the sine kernel. Indeed, we recall that $ \sum_{k=-N}^N\e^{-\i k x} = \frac{\sin((N+1/2)x)}{\sin(x/2)} $ and, by changing the order of integration and a change of variables, we rewrite the integral to
\begin{multline}\label{eq:section_proof_of_CUE:l1error3}
 \int_{-\pi n^{1-\alpha}}^{\pi n^{1-\alpha}}\left|\int_{-\pi}^\pi f(x)\frac{1}{n^{1-\alpha}} \left(\frac{\sin\frac{N+1/2}{n^{1-\alpha}}(x-\theta)}{\sin\frac{x-\theta}{2n^{1-\alpha}}} - \frac{\sin\frac{N+1/2}{n^{1-\alpha}}(x-\theta)}{\frac{x-\theta}{2n^{1-\alpha}}}\right)\right|\d\theta \\
 = \int_{-\pi}^\pi\left|\int_{-\pi}^\pi f(x)\sin\left((N+1/2)\left(\frac{x}{n^{1-\alpha}}-y\right)\right)g_n(x,y)\d x\right|\d y,
\end{multline}
where 
\begin{equation}
 g_n(x,y) = \frac{1}{\sin\frac{1}{2}\left(\frac{x}{n^{1-\alpha}}-y\right)} - \frac{1}{\frac{1}{2}\left(\frac{x}{n^{1-\alpha}}-y\right)}.
\end{equation}
Note that the integrand is bounded on $ [-\pi,\pi]\times[-\pi,\pi] $. For a fixed $ y \in [-\pi,\pi] $,
\begin{multline}
 \int_{-\pi}^\pi f(x)\sin\left((N+1/2)\left(\frac{x}{n^{1-\alpha}}-y\right)\right)g_n(x,y)\d x \\
 = \int_{-\pi}^\pi f(x)\sin\left((N+1/2)\left(\frac{x}{n^{1-\alpha}}-y\right)\right)g_n(0,y)\d x \\
 + \int_{-\pi}^\pi f(x)\sin\left((N+1/2)\left(\frac{x}{n^{1-\alpha}}-y\right)\right)\left(g_n(x,y)-g_n(0,y)\right)\d x,
\end{multline}
where, since $ g_n(0,y) $ is independent of $ n $ and $ (N+1/2)/n^{1-\alpha} \to \infty $, the first term tends to zero by Riemann-Lebesgue Lemma, and the second term tends to zero since $ g_n(x,y) \to g_n(0,y) $ uniformly, as $ n \to 0 $. The integrand of the outer integral at  the right-hand side of \eqref{eq:section_proof_of_CUE:l1error3}  converges to zero for every $y \in [-\pi,\pi)$ and  is bounded. Hence it tends to zero as $n \to \infty$ by Lebesgues dominated convergence theorem and  this  implies that the first term in \eqref{eq:section_proof_of_CUE:l1_error2} tends to zero as $n \to \infty$.

Now we come to the second term in \eqref{eq:section_proof_of_CUE:l1_error2}.  First note that 
\begin{equation}
 \int_{-\frac{N+1/2}{n^{1-\alpha}}}^{\frac{N+1/2}{n^{1-\alpha}}}\F(f)(\xi)\e^{\i\xi\theta}\d\xi \to f(\theta),
\end{equation}
in $ \L^2 $-sense. This is however not enough to claim convergence of the second term at the right-hand side of \eqref{eq:section_proof_of_CUE:l1_error2}, since we need an $\mathbb L_1$-type convergence. We will show this by controlling the tails of $\mathcal F (f)$ by using the fact that $f$ is H\"older continuous  by Lemma \ref{lem:section_proof_of_CUE:holder}. 

We divide the interval of  integration  into two parts, one which is bounded in $ n $ and contains the support of $ f $, and the complement. More precisely,
\begin{multline}\label{eq:section_proof_of_CUE:second_error}
 \int_{-\pi n^{1-\alpha}}^{\pi n^{1-\alpha}}\left|\int_{-\frac{N+1/2}{n^{1-\alpha}}}^{\frac{N+1/2}{n^{1-\alpha}}}\F(f)(\xi)\e^{\i\xi\theta}\d\xi-f(\theta)\right|\d\theta \\
 = \int_{-2\pi}^{2\pi}\left|\int_{-\frac{N+1/2}{n^{1-\alpha}}}^{\frac{N+1/2}{n^{1-\alpha}}}\F(f)(\xi)\e^{\i\xi\theta}\d\xi-f(\theta)\right|\d\theta \\+ \int_{\pi n^{1-\alpha} \geq |\theta|\geq 2\pi}\left|\int_{-\frac{N+1/2}{n^{1-\alpha}}}^{\frac{N+1/2}{n^{1-\alpha}}}\F(f)(\xi)\e^{\i\xi\theta}\d\xi\right|\d\theta.
\end{multline}
 For the second term,  we first rewrite the integral  by  changing the order of integration (recall that the support of $ f $ is in $ [-\pi,\pi] $)
\begin{multline}
 \int_{\pi n^{1-\alpha} \geq |\theta|\geq 2\pi}\left|\int_{-\frac{N+1/2}{n^{1-\alpha}}}^{\frac{N+1/2}{n^{1-\alpha}}}\F(f)(\xi)\e^{\i\xi\theta}\d\xi\right|\d\theta \\
= \int_{\pi n^{1-\alpha} \geq |\theta|\geq 2\pi}\left|\int_{-\pi}^\pi f(\omega)\int_{-\frac{N+1/2}{n^{1-\alpha}}}^{\frac{N+1/2}{n^{1-\alpha}}}\e^{-\i\xi(\omega-\theta)}\d\xi\d\omega\right|\d\theta \\
 = 2\int_{\pi n^{1-\alpha} \geq |\theta|\geq 2\pi}\left|\int_{-\pi}^\pi \frac{f(\omega)}{\omega-\theta}\sin\left(\frac{N+1/2}{n^{1-\alpha}}(\omega-\theta)\right)\d\omega\right|\d\theta. \\
\end{multline}
By the basic trigonometric addition formula for sine the latter is bounded by 
\begin{multline}
 2\int_{\pi n^{1-\alpha} \geq |\theta|\geq 2\pi}\frac{1}{|\theta|}\left|\int_{-\pi}^\pi \frac{f(\omega)}{1-\frac{\omega}{\theta}}\sin\left(\frac{N+1/2}{n^{1-\alpha}}\omega\right)\d\omega\right|\d\theta \\
 + 2\int_{\pi n^{1-\alpha} \geq |\theta|\geq 2\pi}\frac{1}{|\theta|}\left|\int_{-\pi}^\pi \frac{f(\omega)}{1-\frac{\omega}{\theta}}\cos\left(\frac{N+1/2}{n^{1-\alpha}}\omega\right)\d\omega\right|\d\theta.
\end{multline}
The integral over $ \theta $ grows logarithmically in $ n $ if we ignore the  integral over $\omega$. Therefore  we proceed by  computing the rate of convergence of 
\begin{equation}\label{eq:section_proof_of_CUE:fourier_coefficient}
 \int_{-\pi}^\pi \frac{f(\omega)}{1-\frac{\omega}{\theta}}\e^{{\rm i}\frac{N+1/2}{n^{1-\alpha}}\omega}\d\omega. 
\end{equation}
 Since $ \e^{-\i\pi} = -1 $ and since $ f $ has support in $ [-\pi,\pi] $, 
\begin{multline}\label{eq:section_proof_of_CUE:last}
 \int_{-\pi}^\pi \frac{f(\omega)}{1-\frac{\omega}{\theta}}\e^{\i\frac{N+1/2}{n^{1-\alpha}}\omega}\d\omega = - \int_{-\pi}^\pi \frac{f(\omega)}{1-\frac{\omega}{\theta}}\e^{\i\frac{N+1/2}{n^{1-\alpha}}\left(\omega-\frac{n^{1-\alpha}}{N+1/2}\pi\right)}\d\omega \\
 = -\int_{-\pi}^\pi \frac{f\left(\omega+\frac{n^{1-\alpha}}{N+1/2}\pi\right)}{1-\frac{\omega+\frac{n^{1-\alpha}}{N+1/2}\pi}{\theta}}\e^{\i\frac{N+1/2}{n^{1-\alpha}}\omega}\d\omega - \int_{-(1+\frac{n^{1-\alpha}}{N+1/2})\pi}^{-\pi} \frac{f\left(\omega+\frac{n^{1-\alpha}}{N+1/2}\pi\right)}{1-\frac{\omega+\frac{n^{1-\alpha}}{N+1/2}\pi}{\theta}}\e^{\i\frac{N+1/2}{n^{1-\alpha}}\omega}\d\omega.
\end{multline}
By writing \eqref{eq:section_proof_of_CUE:fourier_coefficient} as the average of itself and the right-hand side of \eqref{eq:section_proof_of_CUE:last}, we get
\begin{multline}\label{eq:section_proof_of_CUE:riemann-lebegue_holder_function}
 \left|\int_{-\pi}^\pi \frac{f(\omega)}{1-\frac{\omega}{\theta}}\e^{i\frac{N+1/2}{n^{1-\alpha}}\omega}\d\omega\right| \\
 \leq \frac{1}{2}\left|\int_{-\pi}^\pi \left(\frac{f(\omega)}{1-\frac{\omega}{\theta}}-\frac{f\left(\omega+\frac{n^{1-\alpha}}{N+1/2}\pi\right)}{1-\frac{\omega+\frac{n^{1-\alpha}}{N+1/2}\pi}{\theta}}\right)\e^{i\frac{N+1/2}{n^{1-\alpha}}\omega}\d\omega\right| + \Ordo\left(\frac{n^{1-\alpha}}{N+1/2}\right),
\end{multline}
where  we also used that $ \frac{1}{4}\leq 1-\frac{\omega+\frac{n^{1-\alpha}}{N+1/2}\pi}{\theta} \leq \frac{7}{4} $ to bound the last integral at the right-hand side of \eqref{eq:section_proof_of_CUE:last}. Since $ f $ is H\"older continuous with exponent $ \delta < \frac{\eps}{2} $, by Lemma \ref{lem:section_proof_of_CUE:holder}, and since $ \frac{1}{2}\leq 1-\frac{\omega}{\theta} \leq \frac{3}{2} $ we get that \eqref{eq:section_proof_of_CUE:riemann-lebegue_holder_function} is bounded by
\begin{equation}
 \frac{1}{2}\int_{-\pi}^\pi \frac{|f(\omega)-f(\omega+\frac{n^{1-\alpha}}{N+1/2}\pi)|}{\left|1-\frac{\omega+\frac{n^{1-\alpha}}{N+1/2}\pi}{\theta}\right|}\d\omega + \Ordo\left(\frac{n^{1-\alpha}}{N+1/2}\right)
 \leq c\left(\frac{n^{1-\alpha}}{N+1/2}\right)^\delta,
\end{equation}
where $ c $ is a constant not depending on $ \theta $ or $ n $. Hence the second term in \eqref{eq:section_proof_of_CUE:second_error} is of order $ \ln(n) \left(\frac{n^{1-\alpha}}{N+1/2}\right)^\delta \to 0 $ as $ n \to \infty $. So the second term in \eqref{eq:section_proof_of_CUE:l1_error} tends to zero, as $ n \to \infty $.
 This proves the result.
\end{proof}

Now we are ready for the
\begin{proof}[Proof of Theorem \ref{th:transCUEfirsttwocases}]
We will first prove the statement for $ f $ given in the beginning of section \ref{subsection:section_proof_of_CUE:approximation_to_an_analytic_function} and then extend the result to all functions satisfying the conditions in the theorem.

From Lemma \ref{lem:section_preliminaries:characteristic_function_as_integral}, Lemma \ref{lem:section_proof_of_CUE:approximation_of_integral} and Lemma \ref{lem:section_proof_of_CUE:approximated_characteristic_function} we conclude that
\begin{multline}\label{eq:section_proof_of_CUE:limit_for_arbitrary_parameters}
 \log \det\left(I - \left(1-\e^{\lambda n^{-s} f_{\alpha,N}}\right)K_{n,\gamma_n}\right) = \\
 = n\widehat{\log\varphi}_{n,1}(0) + \sum_{k=1}^{\infty}k\widehat{\log\varphi}_{n,1}(k)\widehat{\log\varphi}_{n,1}(-k) + \Ordo\left(\e^{\frac{n}{2N}\log\rho}\right).
\end{multline}
To understand the second term, we use Lemma \ref{lem:section_proof_of_CUE:varphitof}.
We deal with the first term in \eqref{eq:section_proof_of_CUE:limit_for_arbitrary_parameters} in two separate cases.

CASE 1. Let $ \delta > \alpha $, then $ s = 0 $. By \eqref{eq:section_proof_of_CUE:first_expansion_of_varphi} we get
\begin{equation}\label{eq:section_proof_of_CUE:error_of_first_fourier_coefficient}
 n|\widehat{\log\varphi_{n,1}}(0) - \lambda \hat{f}_{\alpha,N}(0)| \leq cn^{1-\delta}\int_{-\pi}^\pi|f_{\alpha,N}(\e^{i\theta})|\d\theta
\end{equation}
for some constant $ c $. By \eqref{eq:section_proof_of_CUE:order_of_l1} it follows that \eqref{eq:section_proof_of_CUE:error_of_first_fourier_coefficient} is of order $ n^{\alpha-\delta} $.

Since $ \delta > \alpha $, the error term obtained for \eqref{eq:section_proof_of_CUE:general_sobolev_term} and \eqref{eq:section_proof_of_CUE:error_of_first_fourier_coefficient} tends to zero as $ n $ tends to infinity. Hence
\begin{align}	
 & \lim_{n\rightarrow \infty} \left(\log \det\left(I - \left(1-\e^{\lambda f_{\alpha,N}}\right)K_{n,\gamma_n}\right) - \lambda n^{\alpha}\F(f)(0)\right) \\
 = & \lim_{n\rightarrow\infty}\frac{\lambda^2}{n^{1-\alpha}}\sum_{k=1}^N\frac{k}{n^{1-\alpha}}\F(f)\left(\frac{k}{n^{1-\alpha}}\right)\F(f)\left(\frac{-k}{n^{1-\alpha}}\right)  \\
 = &\frac{\lambda^2}{2}\int_{\RR}|\xi||\F(f)(\xi)|^2\d\xi,
\end{align}
where we in the last equality used that $ f $ is real valued and that the sum converges as a Riemann sum by the choice of $ N $, \eqref{eq:section_proof_of_CUE:def_N}, and since the derivative of $ \F(f) $ is bounded. By \eqref{eq:section_preliminaries:mean_of_thinned_CUE} and Lemma \ref{lem:section_proof_of_CUE:correct_approximation_of_f} the proof of Theorem \ref{th:transCUEfirsttwocases} for $ \delta > \alpha $ follows for functions satisfying \eqref{eq:section_proof_of_CUE:epsilon_bound}.

CASE 2. Let $ \delta < \alpha $ then $ s = \frac{\alpha-\delta}{2} $. The second term in \eqref{eq:section_proof_of_CUE:limit_for_arbitrary_parameters} tends to zero. For the first term, expand $ \log \varphi_{n,1} $ to
\begin{multline}
 \log \varphi_{n,1}(z) = \lambda n^{-(\alpha-\delta)/2}(1-\kappa n^{-\delta})f_{\alpha,N}(z) \\
 + \frac{\lambda^2}{2}n^{-\alpha}\kappa(1-\kappa n^{-\delta})f_{\alpha,N}(z)^2 + \Ordo(n^{-\delta}n^{-3(\alpha-\delta)/2}f_{\alpha,N}^3).
\end{multline}
Consider the zeroth Fourier coefficient of both sides. As in CASE 1 we may use \eqref{eq:section_proof_of_CUE:order_of_l1} to obtain the error term, namely
\begin{equation}
 n^{-\delta}n^{-3(\alpha-\delta)/2}\int_{-\pi}^\pi|f_{\alpha,N}(x)|^3\d x = \Ordo\left(n^{-\delta}n^{-3(\alpha-\delta)/2}n^{1-\alpha}\right).
\end{equation}
So
\begin{multline}
 n\widehat{\log\varphi_{n,1}}(0) = \lambda n^{1-(\alpha-\delta)/2}(1-\kappa n^{-\delta})\hat{f}_{\alpha,N}(0) \\
 + \frac{\lambda^2}{2}n^{1-\alpha}\kappa(1-\kappa n^{-\delta})\widehat{f^2}_{\alpha,N}(0) + \Ordo\left(n^{-\frac{1}{2}(\alpha-\delta)}\right).
\end{multline}
Hence, since $ \alpha > \delta $,
\begin{multline}
  \lim_{n\rightarrow \infty} \left(\log \det\left(I - \left(1-\e^{\lambda n^{-(\alpha-\delta)/2}f_{\alpha,N}}\right)K_{n,\gamma_n}\right) \right.\\
 \left.-\lambda (n^{(\alpha+\delta)/2}-\kappa n^{(\alpha-\delta)/2})\F(f)(0)\right) \\
 = \lim_{n\rightarrow \infty}\frac{\lambda^2}{2}n^{1-\alpha}\kappa(1-\kappa n^{-\delta})\widehat{f^2}_{\alpha,N}(0) \\
 = \frac{\lambda^2}{2}\frac{\kappa}{2 \pi} \|f\|^2_{\L^2(\RR)}.
\end{multline}
By \eqref{eq:section_preliminaries:mean_of_thinned_CUE} and Lemma \ref{lem:section_proof_of_CUE:correct_approximation_of_f} the result for $ \delta < \alpha $ follows for functions satisfying \eqref{eq:section_proof_of_CUE:epsilon_bound}.

To end the proof we need a final continuity argument to allow function $f$ satisfying \eqref{eq:optimalcondition} (or $\eps=0$ in \eqref{eq:section_proof_of_CUE:epsilon_bound}). Let $ f $ be a function satisfying the condition in Theorem \ref{th:transCUEfirsttwocases} which, since we are interested in large $ n $, we may assume have support in $ [-\pi,\pi] $. Denote the limiting distribution as $ X(f) $. By a variance estimate, as in the proof of Lemma \ref{lem:section_proof_of_CUE:correct_approximation_of_f}, we obtain, for a $ C^\infty $ function $ g $ with compact support in $ [-\pi,\pi] $, the inequality
\begin{multline}
 \left|\E\left[\e^{\i t n^{-s}\left(X_n^{(\alpha,\gamma_n)}(f)-\E[X_n^{(\alpha,\gamma_n)}(f)]\right)}\right] - \e^{- \frac{t^2}{2}\V[X(f)]}\right| \\
 \leq \left|\E\left[\e^{\i t n^{-s}\left(X_n^{(\alpha,\gamma_n)}(f)-\E[X_n^{(\alpha,\gamma_n)}(f)]\right)}\right] - \E\left[\e^{\i t n^{-s}\left(X_n^{(\alpha,\gamma_n)}(f)-\E[X_n^{(\alpha,\gamma_n)}(g)]\right)}\right]\right| \\
  + \left|\E\left[\e^{\i t n^{-s}\left(X_n^{(\alpha,\gamma_n)}(f)-\E[X_n^{(\alpha,\gamma_n)}(g)]\right)}\right] - \e^{-\frac{t^2}{2}\V[X(g)]}\right| \\ 
 + \left|\e^{-\frac{t^2}{2}\V[X(g)]}- \e^{-\frac{t^2}{2}\V[X(f)]}\right| \\
 \leq tn^{-s}\V[X_n^{(\alpha,\gamma_n)}(f-g)] + \frac{t^2}{2}\V[X(f-g)] \\
 + \left|\E\left[\e^{\i t n^{-s}\left(X_n^{(\alpha,\gamma_n)}(g)-\E[X_n^{(\alpha,\gamma_n)}(g)]\right)}\right] - \e^{-\frac{t^2}{2}\V[X(g)]}\right|. 
\end{multline}
From \eqref{eq:section_preliminaries:variance_of_thinned_CUE} and \eqref{eq:section_proof_of_CUE:H_bound} we get that $ \V[X_n^{(\alpha,\gamma_n)}(f)] $ is bounded by \eqref{eq:optimalcondition}, and the same is true for $ \V[X(f)] $. So by
choosing $ g $ such that $
 \int_\RR|\F(f-g)(\xi)|^2(1+|\xi|)\d\xi
$
is small, the first two terms are small. And since $ g $ satisfies \eqref{eq:section_proof_of_CUE:epsilon_bound}, the last term is small for large $ n $.
\end{proof}

\begin{proof}[Proof of Theorem \ref{th:transCUEthirdcase}]

Up to the evaluation of the first term in \eqref{eq:section_proof_of_CUE:limit_for_arbitrary_parameters} the proof is the same as in the proof of Theorem \ref{th:transCUEfirsttwocases} 

CASE 3. Let $ \delta = \alpha $ then $ s = 0 $. Expand $ \log \varphi_{n,1} $ to
\begin{equation}
 \log \varphi_{n,1}(z) = \lambda f_{\alpha,N}(z) + \sum_{m=1}^{\infty}\frac{(-1)^{m+1}}{m}\left(\kappa n^{-\alpha}(\e^{-\lambda f_{\alpha,N}(z)}-1)\right)^m .
\end{equation}
Again as in CASE 1 we may use \eqref{eq:section_proof_of_CUE:order_of_l1} to obtain
\begin{align}
 n\widehat{\log\varphi}_{n,1}(0) & = \lambda n\hat{f}_{\alpha,N}(0) + \frac{\kappa}{2\pi}n^{1-\alpha}\int_{-\pi}^{\pi}\left(\e^{-\lambda f_{\alpha,N}(\e^{\i\theta})}-1\right)\d\theta + \Ordo(n^{-\alpha}) \\
 & = \lambda n\hat{f_{\alpha}}(0) + \frac{\kappa}{2\pi}n^{1-\alpha}\int_{-\pi}^{\pi}\left(\e^{-\lambda f_{\alpha}(\theta)}-1\right)\d\theta + o(1)
\end{align}
where we used \eqref{eq:section_proof_of_CUE:order_of_l1} in the second equality. Hence
\begin{multline}
 \lim_{n\rightarrow \infty} \left(\log \det\left(I - \left(1-\e^{\lambda f_{\alpha,N}}\right)K_{n,\gamma_n}\right) - \lambda n^{\alpha}\F(f)(0)\right) \\
 = \lim_{n\rightarrow \infty}\frac{\kappa}{2\pi}n^{1-\alpha}\int_{-\pi}^{\pi}\left(\e^{-\lambda f_{\alpha}(\theta)}-1\right)\d\theta \\
 + \lim_{n\rightarrow\infty}\frac{\lambda^2}{n^{1-\alpha}}\sum_{k=1}^N\frac{k}{n^{1-\alpha}}\F(f)\left(\frac{k}{n^{1-\alpha}}\right)\F(f)\left(\frac{-k}{n^{1-\alpha}}\right)  \\
 = \frac{\kappa}{2\pi}\int_{\RR}\left(e^{-\lambda f(\theta)}-1\right)\d\theta + \frac{\lambda^2}{2}\int_{\RR}|\xi||\F(f)(\xi)|^2\d\xi.
\end{multline}
Hence Theorem \ref{th:transCUEthirdcase} follows by \eqref{eq:section_preliminaries:mean_of_thinned_CUE}.
\end{proof}

\section{Proof of Theorem \ref{th:transsinefirsttwocases} and \ref{th:transsinethirdcase}} \label{proof of Sine}

The proof of the CLT for the thinned sine process will have the same structure as the proof of the CLT for the thinned CUE. But with some simplifications. For instance, we will not use a truncation as in  \eqref{eq:section_proof_of_CUE:def_of_approximated_function}, but instead we will start with an analytic function. This also simplifies the calculation in Section \ref{subsection:section_proof_of_sine:the_asymptotic_behavior} compared with Section \ref{subsection:section_proof_of_CUE:the_asymptotic_behavior}. The opening of the lens in the steepest decent analysis  of the Riemann-Hilbert problem will also be easier.

We will  show there exists a disc around the origin such that
\begin{equation}
 \E\left[\e^{\lambda L^{-s}\left(X_L^{(\gamma_L)}(f)-\E[X_L^{(\gamma_L)}(f)]\right)}\right] \rightarrow M(\lambda)
\end{equation}
as $ L \rightarrow \infty $, where $ M $ is the moment generating function for the limiting distribution and  $ L^{-s} $, with $s= \max(0,\frac{1-\delta}{2})$,  is the normalizing factor needed to get a CLT. We will therefore consider
\begin{equation} \label{eq:moment-gen-fun-sine}
 \E\left[\e^{\lambda L^{-s}X_L^{(\gamma_L)}(f)}\right] = \det\left(I+(\e^{\lambda L^{-s}f_L}-1)K_{\text{sine},\gamma_L}\right)_{\L^2(\RR)}
\end{equation}
where $ f_L(x) = f\left(\frac{x}{L}\right) $.

\subsection{Approximation}\label{subsection:section_proof_of_sine:approximation}
We will prove Theorem \ref{th:transsinefirsttwocases} and \ref{th:transsinethirdcase} for functions of the form
\begin{equation}\label{eq:section_proof_of_sine:family_of_functions}
 f(x) = p(x)e^{-x^2}
\end{equation}
where $ p(x) $ is any polynomial. This is sufficient since they are dense in the space of functions with norm
\begin{equation}\label{eq:section_proof_of_sine:optimal_bound}
 \int_{\RR}|\F(f)(\xi)|^2(1+|\xi|)^{1+\epsilon}\d\xi < \infty
\end{equation}
for any $ \epsilon \geq 0 $. Moreover, by \eqref{eq:section_preliminaries:variance_of_Sine} and \eqref{eq:section_proof_of_CUE:sobolev_norm_in_two_ways}
\begin{equation}
 \V[X_L^{(1)}(f)] \leq 2\int_{\RR}|\F(f)(\xi)|^2|\xi|\d\xi.
\end{equation}
By Lemma \ref{lem:momenttovariance} this shows that the family $\left\{\E \left [\exp\left(\i t X_L^{(\gamma_L)}(f)-\E [X_L^{(\gamma_L)}(f)]\right)\right]\right\}_L$ is  equicontinuous in $f$ with respect to \eqref{eq:section_proof_of_sine:optimal_bound}. Also the limit as $L \to \infty$, in case $\delta \neq 1$, is continuous with respect to this norm.    Hence, an argument similar to the one in the end of the proof of Theorem \ref{th:transCUEfirsttwocases}, can be used to extend  Theorem \ref{th:transsinefirsttwocases} from  functions $ g $ of the form  
\eqref{eq:section_proof_of_sine:family_of_functions} to  functions $ f $ satisfying \eqref{eq:section_proof_of_sine:optimal_bound} with $\eps=0$. Indeed, if we denote the limiting distribution of $ X_L^{(\gamma_L)}(f) $ with $ X(f) $, we have 
\begin{multline}
 \left|\E\left[\e^{\i t L^{-s}\left(X_L^{(\gamma_L)}(f)-\E[X_L^{(\gamma_L)}(f)]\right)}\right] - \e^{- \frac{t^2}{2}\V[X(f)]}\right| \\
 \leq tL^{-s}\V[X_L^{(\gamma_L)}(f-g)] + \frac{t^2}{2}\V[X(f-g)] \\
 + \left|\E\left[\e^{\i t L^{-s}\left(X_L^{(\gamma_L)}(g)-\E[X_L^{(\gamma_L)}(g)]\right)}\right] - \e^{-\frac{t^2}{2}\V[X(g)]}\right| \\
 \leq \left(t+\frac{t^2}{2}\right)\int_{\RR}|\F(f-g)(\xi)|^2(1+|\xi|)\d\xi \\
 + \left|\E\left[\e^{\i t L^{-s}\left(X_L^{(\gamma_L)}(g)-\E[X_L^{(\gamma_L)}(g)]\right)}\right] - \e^{-\frac{t^2}{2}\V[X(g)]}\right|.
\end{multline}

Similarly, a variance estimate implies that it is enough to prove Theorem \ref{th:transsinethirdcase} for functions of the form \eqref{eq:section_proof_of_sine:family_of_functions}. However,  we need the strict inequality $\eps>0$ since the limit law for $\delta=1$ is not continuous with respect to \eqref{eq:section_proof_of_sine:optimal_bound} with $\eps=0$. 

\subsection{Riemann-Hilbert problem}\label{subsection:section_proof_of_sine:riemann-hilbert_problem}
In this section we will state and solve the Riemann-Hilbert problem related to the sine process. We will find the asymptotic leading term by a steepest descent method which later will be used to prove Lemma \ref{lem:section_proof_of_sine:approximation_of_integral}.

\begin{RHP}\label{RHP:section_proof_of_sine:first_problem}
Let $ \Gamma_m $ be the real line oriented from left to right. Find a function $ m:\Gamma_m \rightarrow \CC^{2\times 2} $ such that
\begin{itemize}
 \item $ m $ is analytic in $ \CC \backslash \Gamma_m $
 \item $ m_+(z) = m_-(z)
 \begin{pmatrix}
\varphi_{L,t}(z) & -(\varphi_{L,t}(z)-1)\e^{2\i z} \\
(\varphi_{L,t}(z)-1)\e^{-2\i  z} & 2 - \varphi_{L,t}(z)
\end{pmatrix}, $ $ z \in \Gamma_m $ 
 \item $ m(z) = I + \Ordo(z^{-1}) $ as $ |z| \rightarrow \infty $
\end{itemize}
where
\begin{equation}
\varphi_{L,t}(z) = 1 - t\gamma_L\left(1-\e^{\lambda L^{-s}f_L(z)}\right).
\end{equation}
\end{RHP}

Recall from Lemma \ref{lem:section_preliminaries:characteristic_function_as_integral} that we consider $ t \in [0,1] $.

The first step is a dilation, it is done to control how the Riemann-Hilbert problem varies with $ L $. 

Let $ \varphi_t(x) = \varphi_{L,t}(Lx) $, note that $ \varphi_t $ still depends on $ L $, but only from the thinning.
\begin{defn}[$m\rightarrow T$]\label{def:section_proof_of_sine:t_function}
  Let $ \Gamma_T = \Gamma_m $ and define $ T:\CC \backslash \Gamma_T \rightarrow \CC^{2\times 2} $ as
\begin{equation}
 T(z) = m(Lz).
\end{equation}
\end{defn}

\begin{lemma}
Let $ T $ be defined as in Definition \ref{def:section_proof_of_sine:t_function}, then $ T $ solves the Riemann-Hilbert problem
\begin{itemize}
 \item $ T $ is analytic in $ \CC \backslash \Gamma_T $
 \item $ T_+(z) = T_-(z)J_T(z) $ where 
 \begin{equation}
 J_T(z) = 
 \begin{pmatrix}
\varphi_t(z) & -(\varphi_t(z)-1)\e^{2\i L z} \\
(\varphi_t(z)-1)\e^{-2\i L z} & 2 - \varphi_t(z)
\end{pmatrix}, \quad z \in \Gamma_T
\end{equation}
\item $ T(z) = I + \Ordo(z^{-1}) $ as $ |z| \rightarrow \infty $
\end{itemize}
\end{lemma}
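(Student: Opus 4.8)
The plan is straightforward: $T(z)=m(Lz)$ is a dilation of the solution $m$ of RHP~\ref{RHP:section_proof_of_sine:first_problem} by the positive factor $L$, and each of the three defining properties of the Riemann--Hilbert problem for $T$ is inherited directly from the corresponding property of $m$ via the change of variables $z\mapsto Lz$. I would check the three properties in turn.

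For analyticity, note that $z\mapsto Lz$ is an affine bijection of $\CC$ and $m$ is analytic on $\CC\setminus\Gamma_m$ with $\Gamma_m=\RR$; hence $T=m(L\,\cdot\,)$ is analytic on the preimage of $\CC\setminus\RR$, which is again $\CC\setminus\RR=\CC\setminus\Gamma_T$. For the jump relation, observe that for $z\in\Gamma_T=\RR$ we have $Lz\in\Gamma_m$, and since $L>0$ the dilation preserves orientation and maps each open half-plane to itself, so the one-sided boundary values obey $T_\pm(z)=m_\pm(Lz)$. Consequently
\begin{equation}
T_+(z)=m_+(Lz)=m_-(Lz)J_m(Lz)=T_-(z)J_m(Lz),
\end{equation}
and it remains only to identify $J_m(Lz)$ with $J_T(z)$. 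By the definition $\varphi_t(z)=\varphi_{L,t}(Lz)$ the diagonal entries of $J_m(Lz)$ are $\varphi_t(z)$ and $2-\varphi_t(z)$, while the off-diagonal entries $-(\varphi_{L,t}(Lz)-1)\e^{2\i(Lz)}$ and $(\varphi_{L,t}(Lz)-1)\e^{-2\i(Lz)}$ become $-(\varphi_t(z)-1)\e^{2\i Lz}$ and $(\varphi_t(z)-1)\e^{-2\i Lz}$, which is precisely $J_T(z)$. Finally, as $|z|\to\infty$ we have $|Lz|\to\infty$, so $m(Lz)=I+\Ordo((Lz)^{-1})=I+\Ordo(z^{-1})$.

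There is no substantive obstacle in this lemma; it is a bookkeeping step whose only subtlety is not swapping the $+$ and $-$ boundary labels, which is ensured by $L>0$. Its role is to make the $L$-dependence of the problem enter essentially only through the oscillatory exponentials $\e^{\pm2\i Lz}$ (and, mildly, through the thinning inside $\varphi_t$), which is the right setup for the steepest descent analysis carried out in the subsequent lemmas: opening the lens, constructing the global parametrix, and proving the small-norm estimate on the error term. Unlike in the CUE case, the lips of the lens will not need to be shrunk, since $f_L$ is already analytic and $\e^{\pm2\i Lz}$ decays away from $\RR$; that is where the real work begins.
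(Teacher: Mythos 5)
Your argument is correct and is exactly the same change-of-variables verification that the paper condenses into the one-line remark that $J_T(z)=J_m(Lz)$; you simply spell out the three bullet points (analyticity, jump, normalization) that the paper leaves implicit. No further comment is needed.
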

\begin{proof}
This follows directly since $ J_T(z) = J_m(Lz) $.
\end{proof}

\begin{figure}[t]
\begin{center}
\begin{tikzpicture}[scale=4]
\begin{scope}[decoration={markings,mark= at position 0.5 with {\arrow{stealth}}}]
\draw[very thick,postaction=decorate] (-1,0) -- (1,0);
\draw (0,0) node[above] {$+$};
\draw (0,0) node[below] {$-$};
\draw[very thick,postaction=decorate] (-1,0.3) -- (1,0.3);
\draw (0,0.3) node[above] {$+$};
\draw (0,0.3) node[below] {$-$};
\draw[very thick,postaction=decorate] (-1,-0.3) -- (1,-0.3);
\draw (0,-0.3) node[above] {$+$};
\draw (0,-0.3) node[below] {$-$};

\draw (-1,0.3) node[left]{$ \rho $};
\draw (1,0.3) node[right] {$
\begin{psmallmatrix}
1 & -(1 - \varphi_t(z)^{-1})\e^{2\i L z} \\
0 & 1
\end{psmallmatrix}
$};
\draw (-1,0) node[left]{$ 0 $};
\draw (1,0) node[right] {$
\begin{psmallmatrix}
\varphi_t(z) & 0 \\
0 & \varphi_t(z)^{-1}
\end{psmallmatrix}
$};
\draw(-1,-0.3) node[left] {$ -\rho $};
\draw (1,-0.3) node[right] {$
\begin{psmallmatrix}
1 & 0 \\
(1 - \varphi_t(z)^{-1})\e^{-2\i L z} & 1
\end{psmallmatrix}
$};
\end{scope}
\end{tikzpicture}
\end{center}
\caption{The contour and jump matrix for the Riemann-Hilbert problem stated in Lemma \ref{lem:section_proof_of_sine:RHP_s}.}
\end{figure}
The next step is the opening of the (infinite) lens.
\begin{defn}[$ T \rightarrow S $]\label{def:section_proof_of_sine:s_function}
Let $ \rho > 0 $ and let $ \Gamma_S = \{z \in \CC; \im(z) \in \{-\rho,0,\rho \} \} $ oriented from left to right. Define $ S: \CC \backslash \Gamma_S \rightarrow \CC^{2\times2} $ as
\begin{align}
S(z) =& T(z), &\rho <& \im(z) \\
S(z) = & T(z)
\begin{pmatrix}
1 & -(1 - \varphi_t(z)^{-1})\e^{2 \i L z} \\
0 & 1
\end{pmatrix}^{-1},
 & 0 <& \im(z) < \rho \\
S(z) = & T(z)
\begin{pmatrix}
1 & 0 \\
(1 - \varphi_t(z)^{-1})\e^{-2 \i L z} & 1
\end{pmatrix},
& -\rho <& \im(z) < 0 \\
S(z) =& T(z), & & \im(z) < -\rho. \\
\end{align}
\end{defn}
\begin{lemma}\label{lem:section_proof_of_sine:RHP_s}
Let $ S $ be defined as in Definition \ref{def:section_proof_of_sine:s_function}, then $ S $ solves the Riemann-Hilbert problem
\begin{itemize}
 \item $ S $ is analytic in $ \CC \backslash \Gamma_S $
 \item $ S_+(z) = S_-(z)J_S(z) $ where 
\begin{align}
J_S(z) & =
\begin{pmatrix}
1 & -(1 - \varphi_t(z)^{-1})\e^{2 \i L z} \\
0 & 1
\end{pmatrix},
 & \im(z) = \rho \\
J_S(z)  & =
\begin{pmatrix}
\varphi_t(z) & 0 \\
0 & \varphi_t(z)^{-1}
\end{pmatrix}, 
& \im(z) = 0 \\
J_S(z) & =
\begin{pmatrix}
1 & 0 \\
(1 - \varphi_t(z)^{-1})\e^{-2 \i L z} & 1
\end{pmatrix},
& \im(z) = -\rho
\end{align}
\item $ S(z) = I + \Ordo(z^{-1}) $ as $ |z| \rightarrow \infty $
\end{itemize}
\end{lemma}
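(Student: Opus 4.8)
The plan is to carry out the standard "opening of the lens" verification: establish analyticity of $S$ off $\Gamma_S$, check the three jump relations, and check the normalization at infinity, the whole argument hinging on one algebraic factorization of $J_T$. First I would establish that the triangular factors appearing in Definition \ref{def:section_proof_of_sine:s_function} are analytic and invertible in their strips. Since $f$ has the form \eqref{eq:section_proof_of_sine:family_of_functions}, it extends to an entire function that is bounded on each horizontal strip $|\im z|\le\rho$, say $|f(z)|\le C_\rho$ there; hence $|1-\varphi_t(z)| = |t\gamma_L|\,|1-\e^{\lambda L^{-s}f(z)}| \le |1-\e^{|\lambda| C_\rho}|$ on $|\im z|\le\rho$, uniformly in $t\in[0,1]$ and in $L$, and this is $<\tfrac12$ once $\lambda$ lies in a small enough disc around the origin. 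So $\varphi_t$ stays bounded away from $0$ and $1-\varphi_t^{-1}$ is bounded on the closed strip; combined with $|\e^{2\i Lz}|=\e^{-2L\im z}\le 1$ on $\{0<\im z<\rho\}$ and $|\e^{-2\i Lz}|=\e^{2L\im z}\le 1$ on $\{-\rho<\im z<0\}$, all four triangular factors and their inverses are analytic and bounded in the relevant strips. Since $T$ is analytic on $\CC\setminus\RR$ by the preceding lemma, $S$ is analytic on $\CC\setminus\Gamma_S$.

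Next I would check the jumps line by line. On $\im z=\rho$ and $\im z=-\rho$ the function $T$ has no jump, so the jump of $S$ there is exactly the triangular factor relating the two adjacent pieces of the definition, which is the stated $J_S$. On $\RR$, writing $T_+=T_-J_T$ from the preceding lemma gives
$$S_+ = T_-\,J_T\begin{pmatrix}1 & -(1-\varphi_t^{-1})\e^{2\i Lz}\\ 0 & 1\end{pmatrix}^{-1},\qquad S_- = T_-\begin{pmatrix}1 & 0\\ (1-\varphi_t^{-1})\e^{-2\i Lz} & 1\end{pmatrix},$$
so the relation $S_+=S_-\begin{pmatrix}\varphi_t & 0\\ 0 & \varphi_t^{-1}\end{pmatrix}$ on $\RR$ reduces to the matrix identity
$$J_T(z)=\begin{pmatrix}1 & 0\\ (1-\varphi_t^{-1})\e^{-2\i Lz} & 1\end{pmatrix}\begin{pmatrix}\varphi_t & 0\\ 0 & \varphi_t^{-1}\end{pmatrix}\begin{pmatrix}1 & -(1-\varphi_t^{-1})\e^{2\i Lz}\\ 0 & 1\end{pmatrix},$$
which I would verify by direct multiplication, using $\varphi_t(1-\varphi_t^{-1})=\varphi_t-1$ and $(1-\varphi_t^{-1})(\varphi_t-1)=\varphi_t-2+\varphi_t^{-1}$, so that the lower‑right entry comes out $2-\varphi_t$ and the off‑diagonal entries match those of $J_T$.

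Finally, for the behaviour at infinity: for $|z|$ large the point lies outside the strip $|\im z|\le\rho$, where $S=T=I+\Ordo(z^{-1})$ by the preceding lemma; inside the strip the triangular corrections are $I+\Ordo\!\left(\e^{-2L|\im z|}\right)$ and vanish as $|\re z|\to\infty$ since $\varphi_t\to 1$ there, so the normalization $S(z)=I+\Ordo(z^{-1})$ holds throughout. I do not expect a genuine obstacle in this lemma; the one point requiring care is the analyticity check for the triangular factors, i.e. ensuring that $\varphi_t$ does not vanish on the closed strip $|\im z|\le\rho$, and this is exactly where the Gaussian‑times‑polynomial form of $f$ (so that $f$ is bounded on strips) and the smallness of $\lambda$ are used.
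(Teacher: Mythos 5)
Your proof is correct and follows essentially the same route as the paper's: verify analyticity via the nonvanishing of $\varphi_t$ in the strip (using boundedness of $f$ there and small $\lambda$), verify the jumps on $\im z = \pm\rho$ directly from the definition, verify the jump on $\RR$ via the LDU factorization of $J_T$, and verify the normalization at infinity using the decay of $\varphi_t - 1$. The paper states these points more tersely, but the structure and the decisive factorization are identical.
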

\begin{proof}
To make sure that $ S $ is well defined, we need that $ \varphi_t $ is nonzero in the strip $ -\rho \leq \im(z) \leq \rho $. This is holds as  long as $ |\lambda| $ is chosen small enough.

The jump condition follows by the factorization
\begin{equation}
J_T(z) = 
\begin{pmatrix}
1 & 0 \\
(1 - \varphi_t(z)^{-1})\e^{-2 \i L z} & 1
\end{pmatrix}
\begin{pmatrix}
\varphi_t(z) & 0 \\
0 & \varphi_t(z)^{-1}
\end{pmatrix}
\begin{pmatrix}
1 & -(1 - \varphi_t(z)^{-1})\e^{2 \i L z} \\
0 & 1
\end{pmatrix}.
\end{equation}
By the choice of $ f $, \eqref{eq:section_proof_of_sine:family_of_functions}, it is clear that $ 1-\varphi_t(z)^{-1} \to  0 $ in the strip  $ -\rho < \im(z) < \rho $. Hence $ T \to I $  as $ z\to \infty$ . This finishes the proof. 
\end{proof}

The idea of the transformation $ T \rightarrow S $ is that the jumps on $ \im(z) = \rho $ and $ \im(z) =-\rho $ tends to the identity matrix as $ L $ tends to infinity and the jump on the real line defines a Riemann-Hilbert problem that we will solve for finite $ L $.
\begin{defn}[Global Parametrix]\label{def:section_proof_of_sine:p_function}
Let $ \Gamma_P = \RR $ oriented from left to right. Define $ P:\CC\backslash \Gamma_p \rightarrow \CC^{2\times 2} $ as
\begin{equation}
P(z) =
\begin{pmatrix}
\e^{C\log \varphi_t(z)} & 0 \\
0 & \e^{-C\log\varphi_t(z)}
\end{pmatrix}.
\end{equation}
where $ C $ is the Cauchy operator on the real line, defined for $ h \in \L^2(\RR) $ and given by
\begin{equation}\label{eq:section_proof_of_sine:cauchy_operator_real_line_plus}
 Ch(z) = \int_0^{\infty}\F(h)(\xi)e^{i\xi z}\d\xi
\end{equation}
for $ \im(z) > 0 $ and
\begin{equation}\label{eq:section_proof_of_sine:cauchy_operator_real_line_minus}
 Ch(z) = -\int_0^{\infty}\F(h)(-\xi)e^{-i\xi z}\d\xi
\end{equation}
for $ \im(z) < 0 $.
\end{defn}
Note that $ \log \varphi_t $ is well defined if we make sure that $ \lambda $ is small enough.
\begin{lemma}
Let $ P $ be defined as in Definition \ref{def:section_proof_of_sine:p_function}, then $ P $ solves the Riemann-Hilbert problem 
\begin{itemize}
 \item $ P $ is analytic in $ \CC \backslash \Gamma_P $
 \item $ P_+(z) = P_-(z)
 \begin{pmatrix}
\varphi_t(z) & 0 \\
0 & \varphi_t(z)^{-1}
\end{pmatrix}, \quad z \in \Gamma_T $
\item $ T(z) = I + \Ordo(z^{-1}) $ as $ |z| \rightarrow \infty $.
\end{itemize}
\end{lemma}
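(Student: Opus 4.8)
The plan is to verify the three defining properties of the Riemann--Hilbert problem directly from the definition of the Cauchy operator on $\RR$, exactly as in the proof of Lemma~\ref{lem:section_proof_of_CUE:RHP_global_parametrix} for the circle. The first point is that $\log\varphi_t$ makes sense. Since $f(x)=p(x)\e^{-x^2}$ by \eqref{eq:section_proof_of_sine:family_of_functions}, the function $f_L$ is Schwartz, and, as already used in Lemma~\ref{lem:section_proof_of_sine:RHP_s}, for $|\lambda|$ small enough $\varphi_t$ stays in a small neighbourhood of $1$ throughout the strip $-\rho\le\im(z)\le\rho$; hence $\log\varphi_t=\log(1+(\varphi_t-1))$ is well defined with the principal branch, analytic in that strip, and decays rapidly along $\RR$. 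In particular $\log\varphi_t\in\L^2(\RR)$ with Schwartz Fourier transform, so $C\log\varphi_t$ is given by the absolutely convergent integrals \eqref{eq:section_proof_of_sine:cauchy_operator_real_line_plus}--\eqref{eq:section_proof_of_sine:cauchy_operator_real_line_minus}, is analytic off $\RR$, and $P=\operatorname{diag}(\e^{C\log\varphi_t},\e^{-C\log\varphi_t})$ is analytic in $\CC\setminus\RR$.

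For the jump relation I would use the Sokhotski--Plemelj identity $C_+-C_-=I$, which is immediate from \eqref{eq:section_proof_of_sine:cauchy_operator_real_line_plus}--\eqref{eq:section_proof_of_sine:cauchy_operator_real_line_minus} together with Fourier inversion. Thus $(C\log\varphi_t)_+(x)=(C\log\varphi_t)_-(x)+\log\varphi_t(x)$ for $x\in\RR$, and exponentiating the two diagonal entries gives $P_+(x)=P_-(x)\operatorname{diag}(\varphi_t(x),\varphi_t(x)^{-1})$, which is the claimed jump matrix $J_P$.

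For the normalisation at infinity I would integrate by parts once in \eqref{eq:section_proof_of_sine:cauchy_operator_real_line_plus}, writing $\e^{\i\xi z}=(\i z)^{-1}\partial_\xi\e^{\i\xi z}$ and using that $\F(\log\varphi_t)$ is Schwartz and $|\e^{\i\xi z}|\le 1$ for $\im(z)\ge 0$; this yields $C\log\varphi_t(z)=\Ordo(z^{-1})$ uniformly as $z\to\infty$ in the upper half plane, and the mirror estimate holds in the lower half plane. Hence $\e^{\pm C\log\varphi_t(z)}=1+\Ordo(z^{-1})$ and $P(z)=I+\Ordo(z^{-1})$ as $z\to\infty$, which is the last property (with $P$ in place of the $T$ printed in the statement). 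There is no real obstacle here; the only points requiring a little care are that $\log\varphi_t$ be well defined and rapidly decaying, so that the Cauchy transform and the integration by parts are legitimate --- this is where the explicit form of $f$ and the smallness of $\lambda$ enter --- and keeping in mind that $\varphi_t$ still depends on $L$ through $\gamma_L$, although that dependence is irrelevant for the present lemma and only matters for the later error analysis.
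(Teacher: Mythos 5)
Your proof is correct and takes essentially the same route as the paper: the paper's proof is the one-line remark ``Follows directly from the Fourier inversion theorem,'' and the three facts you use — analyticity of the Cauchy transform off $\RR$, the Sokhotski--Plemelj identity $C_+-C_-=I$, and the $\Ordo(z^{-1})$ decay via integration by parts — are exactly what that remark is shorthand for. You have simply unpacked the details.
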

\begin{proof}
 Follows directly from the Fourier inversion theorem.
\end{proof}

We expect that $S$ and $P$ are close. In the last transformation we study  the error term. 
\begin{defn}[$ S \rightarrow R $]\label{def:section_proof_of_sine:r_function}
 Let $ \Gamma_R = \Gamma_S $ and define $ R:\CC \backslash \Gamma_R \rightarrow \CC^{2\times 2} $ as
 \begin{equation}
  R(z) = S(z)P(z)^{-1}.
 \end{equation}
\end{defn}

\begin{lemma}\label{lem:section_proof_of_sine:solution_R_function}
Define $ R $ as in Definition \ref{def:section_proof_of_sine:r_function}, then $ R $ solves the Riemann-Hilbert problem
\begin{itemize}
 \item $ R $ is analytic in $ \CC \backslash \Gamma_R $
 \item $ R_+(z) = R_-(z)J_R(z) $ where 
 \begin{align}
  J_R(z) & = I, & \im(z) & = 0 \\
  J_R(z) & = P(z)J_S(z)P(z)^{-1}, & \im(z) & = \pm\rho
\end{align}
\item $ R(z) = I + \Ordo(z^{-1}) $ as $ |z| \rightarrow \infty$.
\end{itemize}
Moreover,
\begin{equation}
 |R(z) -I|\leq \frac{c}{\rho-|\im(z)|}\e^{-2 L \rho}
\end{equation}
for some constant $ c $.
\end{lemma}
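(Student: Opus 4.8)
## Proof Strategy for Lemma on the Error Term $R$

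The plan is to follow the standard small-norm Riemann--Hilbert argument, as in \cite[Thm. 7.171]{Deift00}, but tracking the $L$-dependence carefully, exactly as was done for the CUE in Lemma \ref{lem:section_proof_of_CUE:solution_R_function}. The fact that the contour $\Gamma_R = \Gamma_S$ does not move with $L$ (the dilation $m\to T$ has already absorbed the scaling) makes this case genuinely simpler than the CUE case; the essential new feature is the exponential factor $\e^{\pm 2\i L z}$ on the lips $\im(z)=\pm\rho$.

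First I would verify that $J_R = I$ on the real line: this is immediate from $J_R(z) = P_-(z) J_S(z) P_+(z)^{-1}$ together with the definition of $P$, which was designed precisely so that $P_+ = P_- J_S$ on $\RR$. On the lips $\im(z) = \rho$ one computes, conjugating the upper-triangular jump by the diagonal matrix $P$,
\begin{equation}
 J_R(z) = I - (1-\varphi_t(z)^{-1})\e^{2\i L z}\e^{2C(\log\varphi_t)(z)}
 \begin{pmatrix} 0 & 1 \\ 0 & 0 \end{pmatrix},
\end{equation}
and similarly on $\im(z) = -\rho$ with $\e^{-2\i L z}$, $\e^{-2C(\log\varphi_t)(z)}$ and the lower-triangular nilpotent. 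On $\im(z) = \rho$ we have $|\e^{2\i L z}| = \e^{-2L\rho}$, and on $\im(z) = -\rho$ likewise $|\e^{-2\i L z}| = \e^{-2L\rho}$; this is the source of the claimed bound. It remains to check that the prefactors $1-\varphi_t(z)^{-1}$ and $\e^{\pm 2 C(\log\varphi_t)(z)}$ are bounded uniformly on the lips, uniformly in $L$ and in $t\in[0,1]$. Since $f$ has the form \eqref{eq:section_proof_of_sine:family_of_functions}, $f_L(z) = f(z/L)$ is analytic and, on the fixed strip $-\rho\le\im(z)\le\rho$, decays as $|\re z|\to\infty$; moreover $\|f_L\|_{L^\infty}$ and the relevant Wiener/Fourier norms of $f_L$ are bounded uniformly in $L$ (indeed for polynomial-times-Gaussian $f$ one gets this directly). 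Hence for $|\lambda|$ small, $\varphi_t$ is bounded away from zero on the strip, $1-\varphi_t^{-1}$ is uniformly small, and by a Wiener-norm (or Hardy-space) estimate on the Cauchy operator, as in \eqref{eq:section_proof_of_CUE:bound_of_cauchy_operator_of_logarithm}, the quantity $|C(\log\varphi_t)(z)|$ is uniformly bounded on the strip. Therefore
\begin{equation}
 \|J_R - I\|_{L^\infty(\Gamma_R)\cap L^2(\Gamma_R)} \le c\,\e^{-2L\rho}
\end{equation}
for a constant $c$ independent of $L$ and $t$.

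With this decay estimate in hand, the general theory of small-norm Riemann--Hilbert problems \cite[Thm. 7.103]{Deift00} applies: for $L$ large enough the operator $I - C^{\Gamma_R}_- w$, with $w = J_R - I$, is invertible on $\L^2(\Gamma_R)$ by a Neumann series, and the solution is represented as $R = I + C^{\Gamma_R}(\mu w)$ where $(I - C^{\Gamma_R}_- w)\mu = I$ and $\|\mu\|_{\L^2(\Gamma_R)}$ is bounded by any constant exceeding $1$ for $L$ large. Then, since $w$ is supported on the two lips (which stay at distance $\rho - |\im z|$ from any point $z$ in the open strip) and using $|w| \le c\e^{-2L\rho}$,
\begin{equation}
 |R(z) - I| \le \frac{1}{2\pi}\int_{\Gamma_R\setminus\RR}\frac{|\mu(w)|\,|w(w)|}{|w - z|}\,|\d w|
 \le \frac{c'\,\|\mu\|_{\L^2(\Gamma_R)}}{\rho - |\im(z)|}\,\e^{-2L\rho},
\end{equation}
which is the asserted bound after renaming the constant.

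I do not expect a serious obstacle here; the only point requiring genuine care is the uniformity (in both $L$ and $t\in[0,1]$) of the bounds on $\varphi_t$ and on $C(\log\varphi_t)$ on the fixed strip, and this is handled by the same Wiener-norm argument used for the CUE. The analyticity and decay of $f_L(z)=f(z/L)$ that make the opening of the lens legitimate are guaranteed by restricting to $f$ of the form \eqref{eq:section_proof_of_sine:family_of_functions}, as already arranged in Section \ref{subsection:section_proof_of_sine:approximation}.
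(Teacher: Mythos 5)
Your overall plan is correct and is exactly the paper's route: compute $J_R$ explicitly on the lips, extract the exponential decay $\e^{-2L\rho}$ from $|\e^{\pm 2\i Lz}|$, bound the prefactors uniformly in $L$ and $t$ via a Wiener/Hardy-norm argument, invoke the small-norm machinery of \cite[Thm.~7.103]{Deift00}, and then estimate $R-I$ through the Cauchy integral representation. Up to the final display everything is fine.

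There is, however, a genuine issue in the last step that you have carried over uncritically from the CUE case. You write $\|\mu\|_{\L^2(\Gamma_R)}$ and assert it is bounded by a constant close to $1$. For the CUE, $\Gamma_R$ is a union of three circles, a compact set, so the constant function $I$ is square-integrable and this makes sense. For the sine case $\Gamma_R$ is a union of three unbounded horizontal lines, and the general theory only gives $\mu - I \in \L^2(\Gamma_R)$; the constant $I$ is \emph{not} in $\L^2$ on an infinite contour, so $\|\mu\|_{\L^2(\Gamma_R)}$ is infinite and the displayed inequality as written does not follow. The fix is to split $\mu = I + (\mu - I)$ in the Cauchy integral and treat the two contributions separately, which is exactly what the paper does: the $I$-part is bounded by
\begin{equation}
\left(\int_{\Gamma_R\setminus\RR}\frac{|\d w|}{|w-z|^2}\right)^{1/2}\|J_R-I\|_{\L^2(\Gamma_R)},
\end{equation}
where the first factor is finite because the integrand decays quadratically along the infinite lips; and the $(\mu - I)$-part is bounded, using $|w-z|\ge \rho-|\im z|$ on the lips together with Cauchy--Schwarz, by $\frac{1}{\rho-|\im z|}\|\mu - I\|_{\L^2(\Gamma_R)}\|J_R-I\|_{\L^2(\Gamma_R)}$. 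Both pieces carry the factor $\e^{-2L\rho}$ through $\|J_R-I\|_{\L^2}$, giving the stated bound. Note also that for this splitting to work you genuinely need the $\L^2$-bound $\|J_R-I\|_{\L^2(\Gamma_R)}\le c'\e^{-2L\rho}$, not only the $\L^\infty$-bound, so make sure your control of the prefactors $1-\varphi_t^{-1}$ and $\e^{\pm 2C(\log\varphi_t)}$ on the lips yields an $\L^2$-decaying majorant (this is where the rapid decay of $f$ from \eqref{eq:section_proof_of_sine:family_of_functions} is used); your proposal uses only the uniform $\L^\infty$ bound on $w$ in the final line, which again would not by itself tame the unbounded contour.
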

\begin{proof}
The structure of the proof is the same as in the proof of Lemma \ref{lem:section_proof_of_CUE:solution_R_function}. The differences are that we are working on the real line instead of the unit circle and consider a different scaling. In fact, these differences, especially the scaling, simplifies the analysis. We will therefore be brief with the details.

Following the same strategy as in the proof of Lemma \ref{lem:section_proof_of_CUE:solution_R_function}, leads to the estimate
\begin{equation}
\|J_R - I \|_{\L^2(\Gamma_R) \cap \L^\infty(\Gamma_R)} \leq c' \e^{-2 \rho L}\\
\end{equation}
for some constant $ c' $. As in the proof of Lemma \ref{lem:section_proof_of_CUE:solution_R_function} we can relay on general principles and find a solution using the Cauchy operator and a Neumann series. It implies that for any  $ z \in \CC $ with $ |\im(z)| < \rho $
\begin{align}
|(R-I)(z)| & \leq \int_{\Gamma_R \backslash \RR} \frac{|\mu(w)||(J_R - I)(w)|}{|w-z|} |\d w| \\
 & \leq \left(\left(\int_{\Gamma_R\backslash \RR}\frac{|\d w|}{|w-z|^2}\right)^{1/2} + \frac{\|\mu-I\|_{\L^2(\Gamma_R)}}{\rho - |\im(z)|}\right) \|J_R - I \|_{\L^2(\Gamma_R)} \\
& \leq \frac{c}{\rho - |\im(z)|} \e^{-2 \rho L},
\end{align}
for some constant $ c $. Here $ \mu - I \in \L^2(\Gamma_R) $ comes from the general considerations  for Riemann-Hilbert problems, see \cite{Deift00}, Theorem 7.103.
\end{proof}

\subsection{Asymptotic behavior of the moment generating function}\label{subsection:section_proof_of_sine:the_asymptotic_behavior}
In this section we will find and calculate the leading term of the moment generating function. The leading term is given in Lemma \ref{lem:section_proof_of_sine:approximation_of_integral} and is attained from the the global parametrix given in last section.

In the same way as for the CUE, Lemma \ref{lem:section_proof_of_sine:solution_R_function} gives us the unique solution to the Riemann-Hilbert problem by tracing back $ R \rightarrow S \rightarrow T \rightarrow m $. By \eqref{eq:section_preliminaries:capital_f_one} and \eqref{eq:section_preliminaries:capital_f_two},
\begin{equation}\label{eq:section_proof_of_sine:relation_between_f_and_g}
F^{(1)}(z) = G^{(1)}(z) + \left(R_+\left(\frac{z}{L}\right)-I\right)G^{(1)}(z)
\end{equation}
and 
\begin{equation}
F^{(2)}(z) = G^{(2)}(z) + \left(\left(R_+\left(\frac{z}{L}\right)^{-1}\right)^T-I\right)G^{(2)}(z)
\end{equation}
where $ G^{(1)} $ and $ G^{(2)} $ are the leading term of $ F^{(1)} $ and $ F^{(2)} $ attained by the global parametrix and are given by
\begin{equation}\label{eq:section_proof_of_sine:capital_g_one}
G^{(1)}(z) = \frac{1}{2\pi \i}\left(\e^{(C_-\log \varphi_{L,t})(z)} \e^{\i z}, \e^{-(C_+\log \varphi_{L,t})(z)}\e^{-\i z} \right)^T
\end{equation}
and
\begin{equation}\label{eq:section_proof_of_sine:capital_g_two}
G^{(2)}(z) = \left(\e^{-(C_+\log \varphi_{L,t})(z)}\e^{-\i z}, -\e^{(C_-\log \varphi_{L,t})(z)}\e^{\i z}\right)
\end{equation}
where $ C_+ $ and $ C_- $ are defined as \eqref{eq:section_proof_of_sine:cauchy_operator_real_line_plus} and \eqref{eq:section_proof_of_sine:cauchy_operator_real_line_minus} respectively but with $ z \in \RR $.
\begin{lemma}\label{lem:section_proof_of_sine:approximation_of_integral}
 Let $ F^{(1)} $, $ F^{(2)} $, $ G^{(1)} $ and $ G^{(2)} $ be defined as in \eqref{eq:section_preliminaries:capital_f_one}, \eqref{eq:section_preliminaries:capital_f_two}, \eqref{eq:section_proof_of_sine:capital_g_one} and \eqref{eq:section_proof_of_sine:capital_g_two} respectively. Then
\begin{multline}
 \int_0^1\frac{1}{t} \int_{\RR}(\varphi_{L,t}(x) - 1)F^{(1)'}(x)^T F^{(2)}(x)\d x \d t = \\
 = \int_0^1\frac{1}{t}\int_{\RR}(\varphi_{L,t}(x) - 1)G^{(1)'}(x)^T G^{(2)}(x) \d x \d t + \Ordo\left(L\e^{-2\rho L}\right)
\end{multline}
 as $ L $ tends to infinity.
\end{lemma}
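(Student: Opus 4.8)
The plan is to follow the proof of Lemma~\ref{lem:section_proof_of_CUE:approximation_of_integral}, exploiting two simplifications: here the lens is a fixed strip $|\im(z)|<\rho$ rather than a shrinking one, and $f$ is Gaussian by \eqref{eq:section_proof_of_sine:family_of_functions}. First I would observe that $\varphi_{L,t}(x)-1=-t\gamma_L(1-\e^{\lambda L^{-s}f_L(x)})$, so that $\tfrac1t(\varphi_{L,t}(x)-1)=\varphi_{L,1}(x)-1$ is independent of $t$ and both sides of the asserted identity take the form $\int_0^1\int_\RR(\varphi_{L,1}-1)(\cdot)$, exactly as for the CUE. Writing $\Delta^{(j)}=F^{(j)}-G^{(j)}$, the relations \eqref{eq:section_proof_of_sine:relation_between_f_and_g} give $\Delta^{(1)}(z)=(R_+(z/L)-I)G^{(1)}(z)$ and $\Delta^{(2)}(z)=((R_+(z/L)^{-1})^{T}-I)G^{(2)}(z)$. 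Lemma~\ref{lem:section_proof_of_sine:solution_R_function} supplies $|R(w)-I|\le c(\rho-|\im(w)|)^{-1}\e^{-2\rho L}$, uniformly in $t\in[0,1]$; since $J_R=I$ on $\RR$, $R$ is analytic in the whole strip $|\im(w)|<\rho$, so a Cauchy estimate on a circle of radius $\rho/2$ gives $|R'(w)|=\Ordo(\e^{-2\rho L})$ for $|\im(w)|\le\rho/2$. Hence on any fixed line $\im(z)=\pm\eta$ with $0<\eta<\rho$ one has $|R(z/L)-I|+\tfrac1L|R'(z/L)|=\Ordo(\e^{-2\rho L})$.

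The crux is a uniform (in $L$ and in $t\in[0,1]$) bound on $G^{(1)},G^{(2)}$ and their derivatives on such a line. Because $f(x)=p(x)\e^{-x^2}$, the function $\log\varphi_{L,t}$ is entire, bounded by a constant independent of $L$ on $\{|\im(z)|\le\eta\}$ (using $|f_L(x+\i\eta)|=|p((x+\i\eta)/L)|\,\e^{(\eta^2-x^2)/L^2}=\Ordo(1)$ uniformly in $x$), and its Fourier transform obeys $\widehat{\log\varphi_{L,t}}(\xi)=\Ordo(L^{1-s}\e^{-rL|\xi|})$ for some fixed $r>0$, since $\F(f_L)(\xi)=L\,\F(f)(L\xi)$ and $\F(f)$ decays exponentially. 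I would then invoke the explicit formulas \eqref{eq:section_proof_of_sine:capital_g_one}--\eqref{eq:section_proof_of_sine:capital_g_two}: provided $\eta<r$, the analytic continuations of $C_\pm\log\varphi_{L,t}$ and all their derivatives are $\Ordo(L^{-s})=\Ordo(1)$ on $\im(z)=\pm\eta$, and $|\e^{\pm\i z}|=\e^{\mp\eta}$, so $|G^{(j)}(z)|+|G^{(j)'}(z)|=\Ordo(1)$ there. Combining with the $R$-estimate yields $\Delta^{(j)}=\Ordo(\e^{-2\rho L})$ and $(\Delta^{(1)})'=\Ordo(\e^{-2\rho L})$, uniformly on $\im(z)=\pm\eta$ and in $t$, whence
\[
 (F^{(1)'})^{T}F^{(2)}-(G^{(1)'})^{T}G^{(2)}=((\Delta^{(1)})')^{T}F^{(2)}+(G^{(1)'})^{T}\Delta^{(2)}=\Ordo(\e^{-2\rho L})
\]
uniformly on $\im(z)=\pm\eta$ and in $t\in[0,1]$.

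To conclude, I would note that $\varphi_{L,1},F^{(1)},F^{(2)},G^{(1)},G^{(2)}$ all extend analytically to the strip $0<\im(z)<\eta$ (they are built from $R(\cdot/L)$, $P(\cdot/L)$, the triangular lens factors and $f^{(1,2)}$), that $\varphi_{L,1}(z)-1$ decays like $|p(z/L)|\,\e^{(\eta^2-(\re z)^2)/L^2}$, and that the $F,G$ factors are $\Ordo(1)$ throughout $0\le\im(z)\le\eta$; therefore Cauchy's theorem on a tall rectangle moves the $x$-integral from $\RR$ up to $\im(z)=\eta$ with vanishing contribution from the vertical sides, and similarly for the $G$-integral. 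Since $|\varphi_{L,1}(z)-1|\le C|f_L(z)|$ and $\int_{\im(z)=\eta}|f_L(z)|\,|\d z|=\Ordo(L)$ (a Gaussian integral of width $\sim L$), the difference of the two integrals is bounded by $\Ordo(L)\cdot\Ordo(\e^{-2\rho L})=\Ordo(L\e^{-2\rho L})$. The main obstacle is the second step: in contrast with \cite{Deift99}, the width of $f_L$ grows with $L$, so one has to verify that $\log\varphi_{L,t}$, its Cauchy transform, and $G^{(1)},G^{(2)}$ (which do not continue across $\RR$, so the contour shift is genuinely needed) acquire no damaging $L$-dependence — and this is precisely where the Gaussian form \eqref{eq:section_proof_of_sine:family_of_functions} is used.
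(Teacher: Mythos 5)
Your argument is correct in substance and follows the same overall strategy as the paper: compare $F^{(j)}$ with $G^{(j)}$ via $R-I$, bound the discrepancy exponentially, and multiply by an $\L^1$-type bound on $\varphi_{L,\cdot}-1$ which costs a factor $L$ because $f_L$ lives on a scale $\sim L$. Your identification of $\tfrac1t(\varphi_{L,t}-1)=\varphi_{L,1}-1$, the splitting into $\Delta^{(1)},\Delta^{(2)}$, the use of $J_R=I$ on $\RR$ and Cauchy estimates for $R'$, and the careful tracking of the $L$-dependence in the bounds for $C_\pm(\log\varphi_{L,t})$ and $G^{(j)}$ all match what the proof requires.

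However, the motivation you give for the contour shift rests on a misconception. You claim that $G^{(1)},G^{(2)}$ \emph{do not} continue across $\RR$ and that the shift to $\im z=\eta$ is therefore ``genuinely needed.'' In fact they do continue: $\log\varphi_{L,t}$ is entire (since $f$ is of the form $p(x)\e^{-x^2}$), and by the Plemelj relation $C_+ - C_- = \mathrm{Id}$ each of $C_\pm(\log\varphi_{L,t})$ extends analytically across $\RR$ (e.g.\ $C_-(\log\varphi_{L,t})=C(\log\varphi_{L,t})-\log\varphi_{L,t}$ in the upper half-plane). Consequently $G^{(1)},G^{(2)}$ and $F^{(1)}-G^{(1)}$, $F^{(2)}-G^{(2)}$ are analytic in a full two-sided strip around $\RR$, and the paper obtains the $\L^\infty(\RR)$ bounds $|F^{(j)\prime}-G^{(j)\prime}|=\Ordo(\e^{-2\rho L})$ directly by a Cauchy estimate on small circles centered on the real axis, with no contour shift. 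The final step is then simply
\begin{equation}
\Bigl|\int_{\RR}(\varphi_{L,t}-1)\bigl((F^{(1)\prime})^{T}F^{(2)}-(G^{(1)\prime})^{T}G^{(2)}\bigr)\d x\Bigr|
\le \bigl\|(F^{(1)\prime})^{T}F^{(2)}-(G^{(1)\prime})^{T}G^{(2)}\bigr\|_{\L^\infty(\RR)}\int_\RR|\varphi_{L,t}(x)-1|\d x,
\end{equation}
with $\int_\RR|\varphi_{L,t}-1|\d x=\Ordo(tL)$. Your deformation to $\im z=\eta$ is a valid alternative and arrives at the same bound, but it adds a layer of bookkeeping (decay on vertical sides, extension of $F^{(j)}$ off $\RR$, $\eta$-dependent constants) that the problem does not require, and the stated reason for doing it is not correct.
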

\begin{proof}
By extending $ G^{(1)} $ to an analytic function over the real line, it is clear, from the maximum modulus principle that $ G^{(1)} $ is uniformly bounded for $ -\rho \leq \im(z) \leq \rho $. We extend $ F^{(1)} - G^{(1)} $ to an analytic function for $ -\rho < \im(z) < \rho $. By doing that and by \eqref{eq:section_proof_of_sine:relation_between_f_and_g} and Lemma \ref{lem:section_proof_of_sine:solution_R_function} we have, for $ |\im(z)| \leq \frac{\rho}{2} $,
\begin{align}
 |F^{(1)}(z) - G^{(1)}(z)| & \leq \frac{c}{\rho-\frac{\rho}{2L}}\e^{-2 L \rho}|G^{(1)}(z)| \\
 & \leq c'\e^{-2 L \rho}
\end{align}
for some constant $ c' $. By Cauchy integral formula, 
\begin{equation}
|F^{(1)'}(z) - G^{(1)'}(z)| \leq \pi \rho c'e^{-2 L \rho}
\end{equation}
for $ z \in \RR $. In a similar way a similar estimation for the difference between $ F^{(2)} $ and $ G^{(2)} $ is obtained. Hence
\begin{align}
& \left|\int_{\RR}(\varphi_{L,t}(x)-1) \left(F^{(1)'}(x)F^{(2)}(x) - G^{(1)'}(x)G^{(2)}(x)\right)\d x\right| \\
\leq & \|F^{(1)'}F^{(2)} - G^{(1)'}G^{(2)}\|_{\L^{\infty}(\RR)}L\int_{\RR}|(\varphi_t(x)-1)|\d x \\
\leq & c'' tL \e^{-2 L\rho}.
\end{align}
for some constant $ c'' $. 
\end{proof}

\begin{lemma}\label{lem:section_proof_of_sine:approximated_characteristic_function}
Let $ G^{(1)} $ and $ G^{(2)} $ be defined as in Lemma \ref{lem:section_proof_of_sine:approximation_of_integral} and recall that we use the notation $ \log\varphi_1(x) = \log\varphi_{L,1}(Lx) $, then
\begin{align}
& -\int_0^1 \frac{1}{t}\int_{\RR}(\log\varphi_{L,t}(x)-1)G^{(1)\prime}(x)^T G^{(2)}(x)\d x \d t \\
& = \frac{L}{\pi}\int_{\RR}\log\varphi_1(\xi) \d\xi + \int_0^{\infty} \xi \F (\log\varphi_1)(\xi)\F(\log\varphi_1)(-\xi) \d\xi.
\end{align}
\end{lemma}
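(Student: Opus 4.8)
The plan is to carry out, essentially verbatim, the computation in the proof of Lemma~\ref{lem:section_proof_of_CUE:approximated_characteristic_function} (which in turn goes back to \cite[Ex.~3]{Deift99}), with the Fourier series on $\UC$ replaced by the Fourier transform on $\RR$ and with one additional rescaling $x\mapsto x/L$ that converts $\varphi_{L,t}$ into $\varphi_t=\varphi_1$. By Lemma~\ref{lem:section_proof_of_sine:approximation_of_integral} it suffices to evaluate $\int_0^1\tfrac1t\int_\RR(\varphi_{L,t}(x)-1)\,G^{(1)\prime}(x)^TG^{(2)}(x)\,\d x\,\d t$. First I would insert the explicit formulas \eqref{eq:section_proof_of_sine:capital_g_one} and \eqref{eq:section_proof_of_sine:capital_g_two} for $G^{(1)}$ and $G^{(2)}$, differentiate $G^{(1)}$ (each factor $\e^{\pm\i z}$ contributes a $\pm\i$ to the derivative), and multiply out $G^{(1)\prime}(x)^TG^{(2)}(x)$. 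The exponentials $\e^{\i z}$ and $\e^{-\i z}$ cancel against each other in each of the two surviving terms, and, using $C_+-C_-=I$ (so that $(C_-\log\varphi_{L,t})(x)-(C_+\log\varphi_{L,t})(x)=-\log\varphi_{L,t}(x)$), this gives
\begin{equation}
 G^{(1)\prime}(x)^TG^{(2)}(x)=\frac{1}{2\pi\i\,\varphi_{L,t}(x)}\Bigl((C_+\log\varphi_{L,t})'(x)+(C_-\log\varphi_{L,t})'(x)+2\i\Bigr).
\end{equation}
Since $\varphi_{L,t}$ is affine in $t$ with $\varphi_{L,0}\equiv1$, one has $\tfrac1t(\varphi_{L,t}(x)-1)=\varphi_{L,1}(x)-1$ and $\tfrac{\varphi_{L,1}(x)-1}{\varphi_{L,t}(x)}=\tfrac{\partial}{\partial t}\log\varphi_{L,t}(x)$, exactly as in the CUE case, so the quantity to be evaluated becomes
\begin{equation}
 \frac{1}{2\pi\i}\int_0^1\int_{\RR}\frac{\partial}{\partial t}\bigl(\log\varphi_{L,t}(x)\bigr)\Bigl((C_+\log\varphi_{L,t})'(x)+(C_-\log\varphi_{L,t})'(x)+2\i\Bigr)\,\d x\,\d t.
\end{equation}

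I would then split this into the ``$2\i$-part'' and the ``Cauchy part''. The $2\i$-part equals $\tfrac1\pi\int_0^1\int_\RR\tfrac{\partial}{\partial t}(\log\varphi_{L,t})(x)\,\d x\,\d t=\tfrac1\pi\int_\RR\log\varphi_{L,1}(x)\,\d x$ since $\log\varphi_{L,0}\equiv0$, and the substitution $x=L\xi$ turns it into $\tfrac{L}{\pi}\int_\RR\log\varphi_1(\xi)\,\d\xi$, which is the first term of the claim. For the Cauchy part I would express the Cauchy operator on $\RR$ in Fourier variables: the boundary values of \eqref{eq:section_proof_of_sine:cauchy_operator_real_line_plus}--\eqref{eq:section_proof_of_sine:cauchy_operator_real_line_minus} give $(C_+h+C_-h)'(x)=\i\int_\RR|\eta|\,\F(h)(\eta)\e^{\i\eta x}\,\d\eta$. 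Interchanging the order of integration and recognising $\int_\RR\tfrac{\partial}{\partial t}(\log\varphi_{L,t})(x)\,\e^{\i\eta x}\,\d x=2\pi\,\tfrac{\partial}{\partial t}\F(\log\varphi_{L,t})(-\eta)$, the Cauchy part becomes $\int_0^1\int_\RR|\eta|\,\F(\log\varphi_{L,t})(\eta)\,\tfrac{\partial}{\partial t}\F(\log\varphi_{L,t})(-\eta)\,\d\eta\,\d t$. Symmetrising in $\eta$ rewrites the $\eta$-integral as $\int_0^\infty\eta\,\tfrac{\partial}{\partial t}\bigl[\F(\log\varphi_{L,t})(\eta)\F(\log\varphi_{L,t})(-\eta)\bigr]\,\d\eta$, so the $t$-integral can be carried out (again using $\log\varphi_{L,0}\equiv0$), and the scaling identity $\F(\log\varphi_{L,1})(\eta)=L\,\F(\log\varphi_1)(L\eta)$ together with the substitution $\xi=L\eta$ then produces exactly $\int_0^\infty\xi\,\F(\log\varphi_1)(\xi)\F(\log\varphi_1)(-\xi)\,\d\xi$. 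Adding the two parts gives the stated formula.

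Conceptually there is no serious obstacle: the identity is the exact analogue of Lemma~\ref{lem:section_proof_of_CUE:approximated_characteristic_function}, and once the Cauchy operator on $\RR$ has been written through the Fourier transform, everything reduces to an algebraic manipulation. The two points that require attention are, first, keeping track of the rescaling $x=L\xi$ that converts $\varphi_{L,t}$ (whose Fourier data live on a window of size $\sim L$) into $\varphi_t=\varphi_1$; and second, justifying the interchanges of $\int_0^1$, $\int_\RR$ and $\tfrac{\partial}{\partial t}$, as well as the symmetrisation step. For $f$ of the form \eqref{eq:section_proof_of_sine:family_of_functions} and $|\lambda|$ small these are routine, because $\log\varphi_{L,t}$ and all the Fourier transforms that appear decay rapidly, uniformly in $t\in[0,1]$; this is precisely where the reduction to such $f$ carried out in Section~\ref{subsection:section_proof_of_sine:approximation} is used.
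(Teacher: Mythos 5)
Your proof is correct and follows essentially the same route as the paper: insert the explicit $G^{(1)}$, $G^{(2)}$, differentiate, simplify with $C_+-C_-=I$ and $\partial_t\log\varphi_{L,t}=(\varphi_{L,1}-1)/\varphi_{L,t}$, split off the constant term, express $C_\pm$ via the Fourier transform, symmetrise, integrate in $t$, and rescale. The only discrepancy is the overall minus sign and the stray ``$\log$'' in the lemma's stated left-hand side, which are typographical: your $\int_0^1\frac1t\int_\RR(\varphi_{L,t}-1)G^{(1)\prime T}G^{(2)}$ is exactly the quantity that feeds into \eqref{eq:section_proof_of_sine:limit_for_arbitrary_parameters} via Lemma~\ref{lem:section_preliminaries:characteristic_function_as_integral} and Lemma~\ref{lem:section_proof_of_sine:approximation_of_integral}.
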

\begin{proof}
A straightforward differentiation of $ G^{(1)}(x) $ and simplification yields that
\begin{align}
& -\int_0^1 \frac{1}{t}\int_{\RR}(\varphi_{L,t}(x)-1)G^{(1)\prime}(x)^T G^{(2)}(x)\d x \d t \\
& = \frac{1}{2\pi \i}\int_0^1 \int_{\RR}\frac{\varphi_{L,1}(x)-1}{\varphi_{L,t}(x)}\left((C_+\log \varphi_{L,t})'(x) + (C_-\log\varphi_{L,t})'(x) + 2 \i\right)\d x \d t.
\end{align}
For the last term, we use Fubini's theorem and a change of variable, to obtain
\begin{equation}
 \frac{1}{2\pi \i}\int_0^1 \int_{\RR}\frac{\varphi_{L,1}(x)-1}{\varphi_{L,t}(x)}2\i\d x \d t = \frac{L}{\pi}\int_{\RR}\log\varphi_1(\xi) \d\xi.
\end{equation}
For the first and second term, we use the fact that the Cauchy operator on the real line can be expressed by the Fourier transform, that is,
\begin{equation}
 C_+ = \F^{-1}\circ1_{(\infty,0]}\F
\end{equation}
and
\begin{equation}
 C_- = -\F^{-1}\circ1_{[0,\infty)}\F.
\end{equation}
By Fubini's theorem
\begin{align}
 &  \frac{1}{2\pi \i}\int_0^1\int_{\RR}\frac{\varphi_{L,1}(x)-1}{\varphi_{L,t}(x)}\left((C_+\log\varphi_{L,t})'(x) + (C_-\log\varphi_{L,t})'(x)\right)\d x\d t\\
 = & \frac{1}{2\pi \i}\int_0^1\int_{\RR}\frac{\partial }{\partial t }\log\varphi_{L,t}(x)\int_0^{\infty} \xi \left(\F(\log\varphi_{L,t})(\xi)\e^{\i\xi x}+\F(\log\varphi_{L,t})(-\xi)\e^{-\i\xi x}\right)\d\xi \d x\d t \\
 = & \int_0^1\int_0^{\infty} \xi \left(\F(\log\varphi_{L,t})(\xi) \frac{\partial }{\partial t }\frac{1}{2\pi \i}\int_{\RR} \log\varphi_{L,t}(x)\e^{\i\xi x}\d x\right. \\
 & + \left.\F(\log\varphi_{L,t})(-\xi)\frac{\partial }{\partial t }\frac{1}{2\pi \i}\int_{\RR} \log\varphi_{L,t}(x)\e^{-\i\xi x}\right)\d x \d\xi \d t \\
 = & \int_0^{\infty} \xi \F(\log\varphi_{L,1})(\xi)\F(\log\varphi_{L,1})(-\xi) \d\xi.
\end{align}
Note that
\begin{equation}
\F(\log\varphi_{L,1})(\xi) = L \F(\log\varphi_1)(L\xi),
\end{equation}
so by a change of variable we get the result of the lemma.
\end{proof}

\begin{proof}[Proof of Theorem \ref{th:transsinefirsttwocases}]
 By Lemma \ref{lem:section_preliminaries:characteristic_function_as_integral}, Lemma \ref{lem:section_proof_of_sine:approximation_of_integral} and Lemma \ref{lem:section_proof_of_sine:approximated_characteristic_function} we know that there exists a disc around the origin such that for $ \lambda $ inside that disc
 \begin{multline}
  \log \det\left(I+\left(\e^{\lambda L^{-s} f_L}-1\right)K_{\text{sine},\gamma_L}\right) = \\ = \frac{L}{\pi}\int_{\RR}\log\varphi_1(\xi) \d\xi + \int_0^{\infty} \xi \F (\log\varphi_1)(\xi)\F(\log\varphi_1)(-\xi) \d\xi + \Ordo(L\e^{-2\rho L})\label{eq:section_proof_of_sine:limit_for_arbitrary_parameters}
 \end{multline}
 as $ L \rightarrow \infty $. Hence by \eqref{eq:moment-gen-fun-sine}, to understand the asymptotic behavior of the moment generating function, we need to understand \eqref{eq:section_proof_of_sine:limit_for_arbitrary_parameters} asymptotically as $L\to \infty$. 
  
 For the second term in \eqref{eq:section_proof_of_sine:limit_for_arbitrary_parameters} we will use the following inequality, which follows from Cauchy-Schwarz inequality, 
 \begin{align}
  & \left|\int_0^{\infty}\xi\left(\F(\log\varphi_1)(\xi)\F(\log\varphi_1)(-\xi) - \F(\lambda L^{-s} f)(\xi)\F(\lambda L^{-s} f)(-\xi)\right)\d\xi\right| \\
  \leq & \left(\left(\int_0^{\infty}|\xi \F(\log\varphi_1)(\xi)|^2\d\xi\right)^{\frac{1}{2}} + \left(\int_0^{\infty}|\xi \F(\lambda L^{-s} f)(-\xi)|^2\d\xi\right)^{\frac{1}{2}}\right) \\
  & \times\|\log\varphi_1 - \lambda L^{-s} f\|_{\L^2(\RR)}.
 \end{align}
 The right hand side tends to zero as $ L $ tends to infinity by Lebesgue dominated convergence theorem.
 
 For the first term in \eqref{eq:section_proof_of_sine:limit_for_arbitrary_parameters} we consider distinct cases.
 
 CASE 1. Let $ \delta > 1 $ then $ s = 0 $. Write
 \begin{equation}
  \log\varphi_1(\xi) = \lambda f(\xi) + \log\left(1+\kappa L^{-\delta}\left(\e^{-\lambda f(\xi)} - 1\right)\right).
 \end{equation}
 Then
 \begin{equation}
  \frac{L}{\pi}\int_\RR \log \varphi_1(\xi)\d\xi = \lambda \frac{L}{\pi}\int_\RR f(\xi)\d\xi + \Ordo(L^{1-\delta}).
 \end{equation}
 We can now, after subtracting the mean \eqref{eq:section_preliminaries:mean_of_thinned_Sine}, let $ L $ tend to infinity in \eqref{eq:section_proof_of_sine:limit_for_arbitrary_parameters}. Hence
 \begin{equation}
  \log \det\left(I+\left(\e^{\lambda f_L}-1\right)K_{\text{sine},\gamma_L}\right)- \lambda\frac{L \gamma}{\pi}\int_\RR f(\xi)\d\xi \rightarrow \frac{\lambda^2}{2}\int_{\RR} |\xi| |\F (f)(\xi)|^2 \d\xi
 \end{equation}
 as $ L \rightarrow \infty $ which proves the result.

 CASE 2. Let $ \delta < 1 $ then $ s = \frac{1-\delta}{2} $. That $ s > 0 $ implies that it is only the first term in \eqref{eq:section_proof_of_sine:limit_for_arbitrary_parameters} that gives a contribution in the limit. By the expansion
 \begin{equation}
  \log\varphi_1(\xi) = \lambda \gamma_L L^{-\frac{1-\delta}{2}}f(\xi) + \frac{1}{2}\lambda^2 \gamma_L \kappa L^{-1} f(\xi)^2 + \Ordo \left(L^{-1 + \frac{1-\delta}{2}}\right)
 \end{equation}
 we see that
 \begin{equation}
  \frac{L}{\pi}\int_\RR \log \varphi_1(\xi)\d\xi = \lambda \gamma_L L^{\frac{1+\delta}{2}}\int_\RR f(\xi)\d\xi + \frac{1}{2}\lambda^2 \gamma_L \kappa \int_\RR f(\xi)^2\d\xi + \Ordo\left(L^{\frac{1-\delta}{2}}\right).
 \end{equation}
 Hence, by subtracting the mean \eqref{eq:section_preliminaries:mean_of_thinned_Sine} in \eqref{eq:section_proof_of_sine:limit_for_arbitrary_parameters}, we obtain
 \begin{equation}
  \log \det\left(I+\left(\e^{\lambda L^{-s} f_L}-1\right)K_{\text{sine},\gamma_L}\right) - \lambda \gamma_L L^{\frac{1+\delta}{2}}\int_\RR f(\xi)\d\xi \rightarrow \frac{1}{2}\lambda^2\frac{\kappa}{\pi} \int_\RR f(\xi)^2\d\xi
 \end{equation}
 as $ L \rightarrow \infty $ which proves the result.
 \end{proof}
\begin{proof}[Proof of Theorem \ref{th:transsinethirdcase}]
Up to the evaluation of the first term in \eqref{eq:section_proof_of_sine:limit_for_arbitrary_parameters} the proof is the same as in the proof of Theorem \ref{th:transsinefirsttwocases}.

CASE 3. Let $ \delta = 1 $ then $ s = 0 $. Expand $ \log \varphi_1 $ to
\begin{equation}
 \log \varphi_1(\xi) = \lambda f(\xi) + \sum_{m=1}^{\infty}\frac{(-1)^{m+1}}{m}\left(\kappa L^{-1}(\e^{-\lambda f(\xi)}-1)\right)^m.
\end{equation}
Then 
\begin{equation}
 \frac{L}{\pi}\int_\RR \log \varphi_1(\xi)\d\xi = \lambda \frac{L}{\pi}\int_{\RR}f(\xi)\d\xi + \frac{\kappa}{\pi}\int_{\RR}\left(\e^{-\lambda f(x)}-1\right)\d\xi + \Ordo(L^{-1}).
\end{equation}
By subtracting the mean \eqref{eq:section_preliminaries:mean_of_thinned_Sine} in \eqref{eq:section_proof_of_sine:limit_for_arbitrary_parameters} we can conclude that
\begin{multline}
 \log \det\left(I+\left(\e^{\lambda f_L}-1\right)K_{\text{sine},\gamma_L}\right) - \lambda \frac{ \gamma_L L}{\pi}\int_{\RR}f(\xi)\d\xi \\
 \rightarrow \frac{\lambda^2}{2}\int_{\RR} |\xi| |\F (f)(\xi)|^2 \d\xi + \frac{\kappa}{\pi}\int_{\RR}\left(\e^{-\lambda f(x)}-1+ \lambda f(x) \right)\d\xi
\end{multline}
as $ L \rightarrow \infty $, which proves the result.
\end{proof}

\end{document}